\newtheorem{theorem}{Theorem}[section]
\newtheorem{thm}[theorem]{Theorem}
\newtheorem{lemma}[theorem]{Lemma}
\newtheorem{lem}[theorem]{Lemma}
\newtheorem{cor}[theorem]{Corollary}
\newtheorem{quest}[theorem]{Question}
\newtheorem{proposition}[theorem]{Proposition}
\newtheorem{rem}[theorem]{Remark}
\newtheorem{defn}[theorem]{Definition}
\newcommand{\cL}{\mathcal{L}}
\newcommand{\cS}{\mathcal{S}}
\newcommand{\cH}{\mathcal{H}}
\newcommand{\cF}{\mathcal{F}}
\newcommand{\norm}[1]{\left\lVert#1\right\rVert}
\newcommand{\tr}{{\rm Tr}}
\numberwithin{equation}{section}
\begin{document}
\title[Lack of isomorphic embeddings]{Lack of isomorphic embeddings of symmetric function spaces into operator ideals}


\author[S. Astashkin]{S. Astashkin}
\address[Sergei Astashkin]{Department of Mathematics, Samara National Research University, Moskovskoye
shosse 34, 443086, Samara, Russia  \emph{E-mail~:} {\tt astash56@mail.ru
 }}

\author[J. Huang]{J. Huang}
\address[Jinghao Huang]{School of Mathematics and Statistics, University of New South Wales, Kensington, 2052, NSW, Australia  \emph{E-mail~:} {\tt
jinghao.huang@unsw.edu.au}}

\author[F. Sukochev]{F. Sukochev}
\address[Fedor Sukochev]{School of Mathematics and Statistics, University of NSW, Sydney,  2052, Australia \emph{E-mail~:} {\tt f.sukochev@unsw.edu.au}}

\thanks{The work of the first author was completed as a part of the implementation of the development program of the Scientific and Educational Mathematical Center Volga Federal District, agreement no. 075-02-2020-1488/1.
The second  author was partially supported by  an Australian Mathematical Society Lift-Off Fellowship.
The   third   author was supported by the Australian Research Council  (FL170100052)}


\subjclass[2010]{46A50, 46E30, 47B10. \hfill Version~: \today.}

\keywords{symmetric spaces;  Orlicz spaces; $L_{p,q}$-spaces; isomorphic embedding; ideal of compact operators; $p$-convex functions; $q$-concave functions.}

\begin{abstract}
Let $E(0,1)$ be a symmetric space on $(0,1)$ and $C_F$ be a symmetric ideal of compact operators on the Hilbert space $\ell_2$ associated with a symmetric sequence space $F$.
We give several criteria for $E(0,1)$ and $ F$ so that
 $E(0,1)$ does not embed into the ideal $C_F$,    extending the result
  for  the case when $E(0,1)=L_p(0,1)$ and $F=\ell_p $, $1\le p<\infty$,
    due to Arazy and Lindenstrauss~\cite{AL}.
\end{abstract}
\maketitle

\section{Introduction}

This paper has been motivated by a  beautiful  result due to Arazy and Lindenstrauss \cite[Theorem 6]{AL} (see also its antecedent \cite[Theorem 6.1]{LP}) that $$L_p(0,1)\not\hookrightarrow C_p,~2<p<\infty,$$
where $C_p$ is the Schatten $p$-class of compact operators on a separable  Hilbert space~$\cH$,
and the notation $A \hookrightarrow B$ (resp. $A\not\hookrightarrow B$)
 stands for indication that
  a Banach space $A$ is (resp. not) isomorphic to a subspace of a Banach space $B$.

The study of isomorphic classification of classical  Banach spaces  has a long history and it  is one of the most essential  topics in the theory of Banach spaces.
It is well-known that  $\ell_p \hookrightarrow L_p(0,1)$ if $1\le p <\infty$ \cite[Lemma~5.1.1 and Proposition~6.4.1]{AK} (see also  \cite{KP} and \cite{Odell}).
On the other hand,   $L_p(0,1)\hookrightarrow \ell_q$, $p\in [1,\infty)$,   $q\in [1,\infty)$, if and only if $p=q=2$ \cite[Ch.~XII, Theorem~9]{Banach} (see also \cite{JO,S01}). The above-mentioned result by
Arazy and Lindenstrauss
\cite{AL} can be viewed as a   noncommutative counterpart  of the latter fact.

For the deep theory concerning symmetric structure
of general symmetric function spaces $E(0,1)$/$E(0,\infty)$ and  symmetric sequence spaces $F$, we refer to outstanding monographs \cite{LT1, LT2,JMST}.
Let $E(0,1)$ be a symmetric space on $(0,1)$ and
 $F$ be a separable  symmetric sequence space, and let
 $C_F$ be a symmetric ideal of compact operators on the Hilbert space $\ell_2$,  generated by $F$. We are interested in the question:
 \begin{center}
   \emph{does  $E(0,1)$ isomorphically embed into $ C_F$?}
 \end{center}
In this general setting, the situation becomes dramatically different.
Consider, for instance, the sequence spaces $\ell_{p,q}$ (resp. function spaces $L_{p,q}(0,1)$), $1<p<\infty$, $1\le q<\infty$, which are  the most natural generalizations of the $\ell_p$-spaces (resp. $L_p(0,1)$-spaces). It was shown recently in \cite{KS} and  \cite{SS2018} that
$$\ell_{p,q}\not\hookrightarrow L_{p,q}(0,1), ~ p\in (1, \infty), ~q\in [1,\infty ), ~p\ne q, $$
which is in strong contrast with the fact  $\ell_p\hookrightarrow L_p(0,1)$   mentioned above.
This remark shows that
the techniques used by Arazy and Lindentrauss may not be sufficient to treat the general case.

Below, we briefly introduce the structure of the present paper.

In Section  \ref{pr}, we  provide all  necessary preliminaries and technical results. Some of them, known for Schatten $p$-classes, we establish for general  symmetric ideals.

It is well known that there are many fundamental differences in properties of the $L_p$-spaces in the cases when $1\le p<2$ and $2<p<\infty$.  The same observation holds also for symmetric spaces, which are located between the spaces $L_1(0,1)$ and $L_2(0,1)$,
on the one hand,
and between $L_2(0,1)$ and $L_\infty(0,1)$,
on the
other hand. In particular, the subspace structure of symmetric spaces located between $L_1(0,1)$ and $L_2(0,1)$ is much richer.
This fact stipulates the different approaches to  these two cases.

Recall that $L_p(0,1)$ has a subspace isomorphic to $\ell_r$ for any  $p<r\le  2$ \cite{JMST,LT2,AK,Dilworth}. Also, every subspace of the Schatten class $C_p$ has a subspace  isomorphic to $\ell_2$ or to $\ell_p$ \cite[Proposition  4]{AL}.
As a result, one can easily deduce the lack  of isomorphic embeddings of $L_p(0,1)$ into $C_p$ when $1\le p<2$ \cite[p. 197]{AL}. Arazy \cite[Corollary 3.2]{Arazy81} established the following deep result, which allows to use a similar reasoning for general ideals:
 \emph{
 for any $p\in [1,2)\cup (2,\infty )$ and  a separable symmetric sequence space $F$, $\ell_p$ is isomorphically embedded into the ideal $C_F$ generated by $F$ if and only if $\ell_p \hookrightarrow F$. } In Section \ref{<2}, we consider the case of symmetric spaces located between $L_1(0,1)$ and $L_2(0,1)$ and, by  making  use of this Arazy's result, show that
\emph{for any symmetric  space $E(0,1)$   such that $E(0,1)\supset L_p(0,1)$, with some $p<2$, we have
 $$
 E(0,1)\not\hookrightarrow C_F$$
 whenever a separable symmetric sequence space $F$ satisfies the condition: for every $\epsilon>0$ there exists $r\in (2-\epsilon,2)$ with
 $\ell_r\not\hookrightarrow F$}.
  In particular, we show that for any $1<  p<2$ and $1\le q<\infty$ we have $L_{p,q}(0,1)\not\hookrightarrow C_{p,q}:=C_{\ell_{p,q}}$.
Similarly,  $\Lambda _\psi^ q(0,1)\not\hookrightarrow C_{\lambda _w^{q'}}$, where $\Lambda _\psi^q(0,1)$ is an arbitrary Lorentz function space such that $\int_0^1 t^{-q/p}d\psi(t) <\infty$ for some $1<p<2$ and $\lambda _w^{q'}$ is any Lorentz sequence space.

Section \ref{first} contains the principal results of the paper (see Propositions \ref{lem:4} and \ref{prp main}). Here,
we consider symmetric spaces $E(0,1)$ located between the spaces $L_2(0,1)$ and $L_\infty(0,1)$
and operator ideals $C_F$ generated by $p$-convex and $q$-concave symmetric sequence spaces $F$, with some $2<p\le q<\infty$. In view of the classical Kadec--Pe{\l}czy\'{n}ski alternative for $L_p$, $p>2$~\cite{KP}, in this case we cannot hope on the existence of symmetric sequence spaces $G$ such that  $G\hookrightarrow E(0,1)$ and $G\not\hookrightarrow C_F$. In particular,
a recent deep result in \cite{HOS} shows that
a subspace of $L_p(0,1)$, $p>2$, either isomorphically embeds  into $\ell_p\oplus \ell_2$ or  contains $(\ell_2\oplus \cdots \oplus \ell_2)_{p}$.
However,  both $\ell_p\oplus \ell_2$ and $(\ell_2\oplus \cdots \oplus \ell_2)_{p}$ are isomorphic to some subspaces of $C_p$. This demonstrates why the case $p>2$ is much harder than the case $1\le p<2$.
Indeed, as one can see from  \cite[Theorem 6]{AL}, the proof of the lack of isomorphic embeddings of $L_p(0,1)$ into $C_p$, $p>2$, based on using the classical Haar basis, is rather complicated.
Observe that the idea of argument in \cite{AL} can be  traced back to  the proof of Theorem~6.1 by Lindenstrauss and Pe{\l}czy\'{n}ski in \cite{LP},
in which the authors stated that there are no  isomorphisms  from $L_p(0,1)$ into the space $(\ell_2\oplus\ell_2\oplus\cdots)_{p}$ but the proof there was  oversimplified and incomplete (see a related comment in \cite{AL}).
In Section \ref{first}, we succeed in extending  of  \cite[Theorem 6]{AL}
to
 some classes of operator ideals generated by $p$-convex and $q$-concave symmetric sequence spaces, $2<p\le q<\infty$, in particular, to the class of distributionally concave spaces.

In Section~\ref{app},  we collect applications of the results obtained in the previous section.
In particular,
in Corollary \ref{cor: Orl},
we prove that $L_{M}[0,1]\not\hookrightarrow  C_{\ell_M}$ for every submultiplicative
Orlicz function $M$, which is equivalent to a  $p$-convex Orlicz function for some $p>2$. 
This result can be treated as  a  partial noncommutative extension   of a well-known theorem by Lindenstrauss and Tzafriri that an Orlicz function space $L_M(0,1)$, which is not isomorphic to a Hilbert space, is not isomorphically embedded into any separable sequence Orlicz space $\ell_N$ \cite[Theorem~3]{LT3}. Another application of the results obtained in
the previous section relates to the Lorentz spaces $L_{p,q}$: we show that $L_{p,q}(0,1)\not\hookrightarrow C_{p,q}$ if $2<q\le p <\infty$ (see Theorem \ref{Lor}).


In the final section of the paper, we focus on considering the
 spaces $L_{2,q}$, $1\le q<\infty$, which do not satisfy the assumptions on symmetric function spaces in the preceding sections.
The space $L_{2,q}$, $1\le q<\infty$ is a  typical example  of a symmetric function space which is  ``very close''\:to the space $L_2$. We show that
$$L_{2,q}(0,1)\not\hookrightarrow C_{2,q}, ~q\in [1, \infty ), ~q\ne 2.$$
The main tools here are known properties of sequences of independent functions in $L_{2,q}$-spaces   \cite{AS08,CD89}, combined with
recent results on the lack of isomorphic embeddings from $\ell_{p,q}$  into $L_{p,q}(0,1)$   co-authored  by the third named  author \cite{KS,SS2018} and with  a result due to  Arazy \cite[Theorem 2.4]{Arazy81} describing shell-block basic  sequences in $C_F$.

\section{Preliminaries and auxiliary results}\label{pr}

 We use \cite{AK, LT1, LT2, JMST} as  main references to Banach space theory.
General facts concerning operator ideals may
be found in \cite{DP2,LSZ,Kalton_S, DPS2016,KPS} and references therein.
For convenience of the reader, some of the basic
definitions are recalled.

\subsection{Symmetric function and sequence spaces}
\label{spaces}

Let $L_0:=L_0(I)$ be the space of finite almost everywhere Lebesgue measurable functions either on $I=[0,1]$ or $I=[0,\infty)$
(with identification $m$-a.e.) equipped with  Lebesgue measure $m$ or the set $I=\mathbb{N}$ of all positive integers equipped with the counting measure (in the latter case, the space $L_0$ coincides with the space $\ell_\infty (\mathbb{N})$ of all bounded real-valued sequences).
Denote by $\cS : =S (I)$ the subset of $L_0$ which
consists of all functions (or sequences) $f$ such that {\it the distribution function}
$$
d_{f}(s):=m\left(\{t\in I:\,|f(t)|>s\}\right )$$ is finite for some $s>0$.
Any two functions $f$ and $g$ from $\cS$ are said to be {\it equimeasurable}
if $d_f(s)=d_g(s)$ for every $s>0.$
We denote by $f^*$ the non-increasing right-continuous rearrangement of $|f|$  given by
$$f^*(t):=\inf\left\{s\geq0:\ d_{f}(s)\le t\right \}, \ \ t\in I.$$

Let $E=E(I)$  be a Banach space of real-valued Lebesgue measurable functions if $I=[0,1]$ or $[0,\infty)$ (resp. of real-valued sequences if $I=\mathbb{N}$). To specify the notation we shall also use $E(0,1)$ or $E(0,\infty)$ instead of $E$. The space $E$ is
said to be an {\it ideal lattice} if the conditions $f\in E$ and $|g|\leq |f|,$ $g\in \cS$ imply that $g\in E$ and $\left\| g \right \|_E\leq \left\| f \right\|_E.$
The ideal lattice $E\subseteq \cS $ (respectively, $E\subset \ell_\infty$) is said to be a {\it symmetric function space} (respectively, \emph{symmetric sequence space}) if the norm $\left\|\cdot \right\|_E$ is {\it symmetric} (or {\it rearrangement invariant}),
that is, for every $f\in E$ and each function $g\in \cS$ (respectively, each sequence $g\in \ell_\infty$) with $g^*=f^*$, we have $g\in E$ and $\left\|g\right\|_E=\left\|f\right\|_E$ (see \cite{KPS, LT2}).

The function $\phi_{E}(t):=\left\|\chi_{[0,t]}\right\|_E$, $t\in I$ (resp. $\phi_{E}(n):= \left\| \sum_{k=0}^{n-1}  e_k \right\|_E$, $n\in\mathbb{N}$) is called the {\it fundamental function} of a  symmetric function (resp.  sequence) space $E$. In what follows, $\chi_{A}$ denotes the characteristic function of a set $A$ and $e_k$, $k=0,1,2\dots$, stand for the canonical unit vectors in a sequence space.

 For every symmetric function space $E$ its fundamental function $\phi_{E}$ is {\it quasi-concave}, that is, it is nonnegative, increases, $\phi_{E}(0)=0$, and the function $\phi_{E}(t)/t$ decreases on $I$. The fundamental function of a symmetric sequence space has analogous properties.

 Without loss of generality, for any symmetric function (resp. sequence) space $E$ we always assume that $\left\| \chi_{[0,1]} \right\|_E=1$ (resp. $\left\| e_0\right\|_E =1$).

If $\tau>0,$ the dilation operator $\sigma_{\tau}$ is defined by setting $\sigma_{\tau}f(s)=f(s/{\tau}),$ $s>0,$ in the case of the semi-axis. For a function on  the interval $(0,1),$ the operator $\sigma_{\tau}$ is defined by
$$
\sigma_{\tau}f(s)=
\begin{cases}
f(s/\tau),& s\leq\min\{1,\tau\}, \\
0,& otherwise.
\end{cases}
$$
The operator $\sigma_{\tau}$ is bounded in every function symmetric space $E(I)$ and $\left\|\sigma_\tau\right\|_{E(I)\to E(I)}\le\max(1,\tau)$ \cite[Theorem~II.4.5]{KPS}.
In particular, $\left\|\sigma_\tau\right\|_{L_p\to L_p}=\tau^{1/p}$, $1\le p\le\infty$.

Similarly, in the case of sequence spaces, for each $m \in \mathbb N$ by $ {\sigma}_m$ and $ {\sigma}_{1/m}$ we define the {\it dilation operators} as follows: if $a = (a_n)_{n=0}^\infty$, then
$$
{\sigma}_m a = \left( ({\sigma}_m a)_n \right)_{n=0}^{\infty}
= \big ( \overbrace {a_0, a_0, \ldots, a_0}^{m}, \overbrace {a_1, a_1, \ldots, a_1}^{m}, \ldots \big)
$$
and
$$
{\sigma}_{1/m} a =
\left( ( {\sigma}_{1/m} a)_n \right)_{n=0}^{\infty} = \Big (\frac{1}{m} \sum_{k=nm  }^{(n+1)m-1 } a_k \Big)_{n=0}^{\infty}
$$
(see, for example, \cite[p.~223]{KPS}). As in the case of function spaces, these operators are bounded in every symmetric sequence space $F$ with the same estimates for their norms.
Also, $\left\|\sigma_{1/m} \right\|_{\ell_p\to \ell_p}=m^{-1/p}$ and $\left\|\sigma_{m} \right \|_{\ell_p\to \ell_p}=m^{1/p}$, $m\in\mathbb{N}$, $1\le p\le\infty$.

The {\it dilation function} ${\mathcal M}_f$ of a nonnegative function $f$ on the interval $(0,1)$ is defined by
$$
{\mathcal M}_f(t):=\sup_{0<s\le\min(1,1/t)}\frac{f(st)}{f(s)},\;\;t>0.$$
 Since the function ${\mathcal M}_f$ is submultiplicative, there are the following {\it dilation exponents}:
\[
 \gamma_\varphi:= \lim\limits_{t\to+0}
 \frac{\ln M_\varphi(t)}{\ln t}\;\;\mbox{and}\;\;
 \delta_\varphi:= \lim\limits_{t\to\infty}
 \frac{\ln M_\varphi(t)}{\ln t}
\]
For each quasi-concave function $\varphi$ we have $0\le \gamma_\varphi\le \delta_\varphi\le 1$ \cite[\S\,II.1]{KPS}.

We say that $x\in S(I)$, where $I=[0,1]$ or $I=(0,\infty)$ (resp. $x=(x_k)_{k=0}^\infty \in \ell_\infty$), is submajorized by $y\in S(I)$ (resp.   by $y=(y_k)_{k=0}^\infty \in \ell_\infty$)  in the sense of {\it Hardy--Littlewood--P\'{o}lya} (briefly, $x\prec\prec y$) if
$$
\int_0^t x^*(s)\,ds\le \int_0^t y^*(s)\,ds,\;\;t\in I$$
(resp.
$$
\sum _{k=0}^n x_k^* \leq\sum_{k=0}^n y_k^*, n=0,1,2,\dots).$$
See   \cite[Definition~2.a.6 and Proposition~2.a.8]{LT2} or \cite[\S\,2.3]{Bennett_S} for the main properties of this pre-order.  Recall here only that the norm of every separable symmetric space $E$ is {\it monotone with respect to Hardy--Littlewood--P\'{o}lya submajorization}, i.e.,
if $x\in  \cS$ and $y\in E$ such that $x\prec \prec y  $, then $x\in E$ with $\norm{x}_E\le \norm{y}_E$ (see e.g.   \cite[Proposition~2.a.8]{LT2}).

Let $E=E(0,1)$ be a symmetric space, $x_{k,j}:=\phi_E(2^{-k})^{-1}\chi_{\Delta_k^j}$, where $\Delta_k^j:=[j2^{-k},(j+1)2^{-k})$, $k=1,2,\dots$, $j=0,1,\dots,2^k-1$. Then, for each $k=1,2,\dots$, $\{x_{k,j}\}_{j=0}^{2^k-1}$ is a normalized basis in the subspace $E_k:=[x_{k,j},j=0,\dots,2^k-1]$ of $E$.

\begin{lem}\label{finite dim}
Suppose that a symmetric space $E=E(0,1)$ satisfies the condition:
\begin{equation}
\label{lem dim}
\lim_{t\to\infty}\frac{{\mathcal M}_{\phi_E}(t)}{t^{1/2}}=0.
\end{equation}
Then, for arbitrary $\epsilon>0$ there exists a positive integer $k=k(\epsilon)$ such that for every linear operator $V:\,E_k\to \ell_2$ such that $\|V\|=1$ we have
$$
|\{j=0,1,\dots,2^{k}-1:\, \left\|V(x_{k,j})\right\|_{\ell_2}\ge\epsilon\}|\le \epsilon 2^{k}.$$
\end{lem}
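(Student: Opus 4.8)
The plan is to argue by contradiction, exploiting cancellation in $\ell_2$ to turn ``many large images $V(x_{k,j})$'' into a single vector of the unit ball of $E_k$ whose image under $V$ is too big. The starting observation concerns only the functions $x_{k,j}$: since they are normalized characteristic functions of the pairwise disjoint dyadic intervals $\Delta_k^j$, each of measure $2^{-k}$, for any $A\subseteq\{0,\dots,2^k-1\}$ and any signs $\varepsilon_j\in\{-1,1\}$ the function $\sum_{j\in A}\varepsilon_j x_{k,j}$ has decreasing rearrangement $\phi_E(2^{-k})^{-1}\chi_{[0,|A|2^{-k})}$, whence, by rearrangement invariance of $\|\cdot\|_E$,
\[
\Big\|\sum_{j\in A}\varepsilon_j x_{k,j}\Big\|_E=\frac{\phi_E(|A|\cdot 2^{-k})}{\phi_E(2^{-k})}\le\mathcal{M}_{\phi_E}(|A|),
\]
the last step being legitimate because $|A|\le 2^k$, i.e. $2^{-k}\le\min(1,1/|A|)$, so that $s=2^{-k}$ is admissible in the supremum defining $\mathcal{M}_{\phi_E}(|A|)$.

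Now fix $\epsilon>0$. By \eqref{lem dim} there is $T=T(\epsilon)\ge 1$ such that $\mathcal{M}_{\phi_E}(t)<\epsilon\, t^{1/2}$ for all $t\ge T$; choose a positive integer $k=k(\epsilon)$ with $2^k\ge T/\epsilon$. Assume, towards a contradiction, that for this $k$ there is a linear operator $V\colon E_k\to\ell_2$ with $\|V\|=1$ and $N:=|\{j:\ \|V(x_{k,j})\|_{\ell_2}\ge\epsilon\}|>\epsilon 2^k$; call this index set $A$, so that $T<N\le 2^k$. Averaging over the $2^{N}$ sign vectors $(\varepsilon_j)_{j\in A}\in\{-1,1\}^A$ and using orthogonality of the Rademacher system (equivalently, iterating the parallelogram identity) one obtains $2^{-N}\sum_{\varepsilon}\big\|\sum_{j\in A}\varepsilon_j V(x_{k,j})\big\|_{\ell_2}^2=\sum_{j\in A}\|V(x_{k,j})\|_{\ell_2}^2\ge N\epsilon^2$; hence there is a choice of signs with $\big\|\sum_{j\in A}\varepsilon_j V(x_{k,j})\big\|_{\ell_2}\ge\epsilon\sqrt N$.

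Setting $x:=\sum_{j\in A}\varepsilon_j x_{k,j}\in E_k$ for these signs and combining the previous two inequalities with $\|V\|=1$ gives
\[
\epsilon\sqrt N\le\big\|V x\big\|_{\ell_2}\le\|x\|_E\le\mathcal{M}_{\phi_E}(N),
\]
that is $\mathcal{M}_{\phi_E}(N)/\sqrt N\ge\epsilon$, which contradicts $N>T$. Therefore no such $V$ exists, and the lemma holds with $k(\epsilon)$ as chosen. The argument is elementary once the pieces are assembled; the only point I expect to require care is the matching of scales — picking $k$ large enough, relative to the rate at which $\mathcal{M}_{\phi_E}(t)/t^{1/2}\to 0$, that the lower bound $N>\epsilon 2^k$ forced by the assumption already places $N$ in the range $t\ge T$ where $\mathcal{M}_{\phi_E}(t)<\epsilon\sqrt t$ — together with the bookkeeping that keeps $|A|\le 2^k$ so that the passage from $\phi_E(|A|2^{-k})/\phi_E(2^{-k})$ to $\mathcal{M}_{\phi_E}(|A|)$ remains valid.
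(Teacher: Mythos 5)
Your proof is correct and follows essentially the same route as the paper's: the same sign-averaging via the parallelogram identity to get the lower bound $\epsilon\sqrt{N}$, the same upper bound $\phi_E(|A|2^{-k})/\phi_E(2^{-k})\le\mathcal{M}_{\phi_E}(|A|)$, and the same choice of $k$ large enough that $\epsilon 2^k$ exceeds the threshold beyond which $\mathcal{M}_{\phi_E}(t)<\epsilon t^{1/2}$. The only difference is presentational (you argue by direct contradiction for a fixed $k$, while the paper first bounds $|A_k(\epsilon)|$ uniformly in $k$ and then picks $k$), which is immaterial.
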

\begin{proof}
Denote for each $k\in\mathbb{N}$
$$
A_k(\epsilon):=\{j=0,1,\dots,2^{k}-1:\,\left\|V(x_{k,j})\right\|_{\ell_2}\ge\epsilon\}.$$
Then, on the one hand, for any $\theta_j=\pm 1$, $j=0,1,\dots,2^{k}-1$,
\begin{eqnarray*}
\Big\|\sum_{j\in A_k(\epsilon)}\theta_jV(x_{k,j})\Big\|_{\ell_2}&=& \Big\|V\Big(\sum_{j\in A_k(\epsilon)}\theta_jx_{k,j}\Big)\Big\|_{\ell_2}\le \Big\|\sum_{j\in A_k(\epsilon)}\theta_jx_{k,j}\Big\|_{\ell_2}\\
&=& \frac{\phi_E(|A_k(\epsilon)|\cdot 2^{-k})}{\phi_E(2^{-k})}\le {\mathcal M}_{\phi_E}(|A_k(\epsilon)|).
\end{eqnarray*}
Hence, we have
$$
\Big({\rm Ave}_{\theta_j=\pm 1}\Big\|\sum_{j\in A_k(\epsilon)}\theta_jV(x_{k,j})\Big\|_{\ell_2}^2\Big)^{1/2}\le {\mathcal M}_{\phi_E}(|A_k(\epsilon)|).$$
On the other hand, according to the parallelogram identity,
$$
\Big({\rm Ave}_{\theta_j=\pm 1}\Big\|\sum_{j\in A_k(\epsilon)}\theta_jV(x_{k,j})\Big\|_{\ell_2}^2\Big)^{1/2}=\Big(\sum_{j\in A_k(\epsilon)}\left\|V(x_{k,j})\right\|_{\ell_2}^2\Big)^{1/2}\ge \epsilon |A_k(\epsilon)|^{1/2}.$$
Consequently,
$$
\epsilon |A_k(\epsilon)|^{1/2}\le {\mathcal M}_{\phi_E}(|A_k(\epsilon)|),\;\;k=1,2,\dots$$
Combining this inequality with the hypothesis of the lemma, we conclude that there is a constant $C=C(\epsilon)$ such that $|A_k(\epsilon)|\le C$ for all $k=1,2,\dots$. Choosing $k$ so that $\epsilon 2^{k}>C$, we get the desired result.
\end{proof}

\begin{rem}
\label{rem lem dim}
In particular, by the definition of the dilation exponents, one can readily see that condition \eqref{lem dim} follows from the inequality $\delta_{\phi_E}<1/2$.
\end{rem}


\subsection{Orlicz and Lorentz spaces}
\label{deflor}

The most known and important symmetric spaces are the $L_p$-spaces, $1\le p\le\infty$. Their natural generalization is the Orlicz spaces. Let $M$ be an Orlicz function, that is, an increasing convex function on $[0, \infty)$ such that $M(0) = 0$. Denote by $L_{M}:=L_M(I)$, where $I=(0,1)$ or $(0,\infty)$, the {\it Orlicz space}
on $I$ (see e.g. \cite{KR}) endowed with the Luxemburg--Nakano norm
$$
\left\| f \right\|_{L_{M}} = \inf \left\{v > 0 \colon \int_I M(|f(t)|/v) \, dt \leq 1 \right \}.
$$
In particular, if $M(u)=u^p$, $1\le p<\infty$, we obtain $L_p$. One can readily check that the fundamental function of $L_M$ is determined by the formula: $\phi_{L_M}(u)=1/M^{-1}(1/u)$, $0<u\le 1$, where $M^{-1}$ is the inverse function for $M$.


Similarly, we can define an {\it Orlicz sequence space}. Specifically, the space $\ell_{N}$, where $N$ is an Orlicz function, consists of all sequences $(a_{k})_{k=0}^{\infty}$ such that
$$
\left\| (a_{k})_{k=0}^{\infty} \right \|_{\ell_{N}} := \inf\left\{u>0: \sum_{k=0}^{\infty} N\Big( \frac{|a_{k}|}{u} \Big)\leq 1\right\}<\infty.
$$

An Orlicz function $H$ satisfies the $\Delta_{2}^{\infty}$-condition ($H\in \Delta_{2}^{\infty}$) (resp. the $\Delta_{2}^{0}$-condition ($H\in \Delta_{2}^{0}$)) if
$$
\limsup\limits_{t \to \infty}\frac{H(2t)}{H(t)} < \infty\;\;\mbox{(resp.}\;\;\limsup\limits_{t \to 0}\frac{H(2t)}{H(t)} < \infty).
$$
It is well known that an Orlicz function space $L_{M}$ on $[0,1]$ (resp. an Orlicz sequence space $\ell_{N}$) is separable if and only if  $M\in \Delta_{2}^{\infty}$ (resp. $N\in \Delta_{2}^{0}$).

Observe that the definition of an Orlicz sequence space $\ell_{N}$ depends (up to equivalence of norms) only on the behaviour of the function $N$ near zero. More precisely, in the separable case (i.e., when $N,N_1 \in \Delta_{2}^{0}$), the following conditions are equivalent: (1) $\ell_{N}=\ell_{N_1}$ (with equivalence of norms); 2) the unit vector bases of the spaces $\ell_{N}$ and $\ell_{N_1}$ are equivalent; 3) there are $C > 0$ and $t_{0} > 0$ such that for all $0 \leq t \leq t_{0}$ it holds
$$
C^{-1}N_1(t) \leq N(t) \leq CN_1(t)
$$
(cf. \cite[Proposition~4.a.5]{LT1}).
Quite similarly, the definition of an Orlicz function space $L_M$ on $[0,1]$ depends only on the behaviour of the function $M$ for large values of the argument.

Another natural generalization of the $L_p$-spaces is the class of Lorentz spaces.
Let $\psi$ be an increasing concave function on $I$ with
$\psi(0)= \psi(+0) = 0$, $\psi(\infty)=\infty$ and $1\le q<\infty.$  The Lorentz space $\Lambda^q_\psi:=\Lambda^q_\psi(I)$ consists of all measurable functions $f$ on $I$, for which
\[\norm{f}_{\Lambda^q_\psi }:= \Big(\int_If^*(t)^q\,d\psi(t)\Big)^{1/q} <\infty\]
(see \cite{KMP,KamMal,LT2,LT1}).
It is well-known that $\Lambda^q_\psi(I)$ is separable for all $\psi$ and $1\le q<\infty$~\cite{KamMal}.

 Recall also the definition of Lorentz spaces $L_{p,q}:=L_{p,q}(I)$ \cite{ON,Bennett_S,Creekmore,Dilworth}. If $1 <  p < \infty$ and $1 \leq q\le \infty,$ then
 $L_{p,q}$ is the space of all measurable functions $f$ on $I$ such that
$$\norm{f}_{p,q}:= \begin{cases}
 \left(\int_I f^*(t)^q d(t^{q/p})\right)^{1/q}, &q<\infty;  \\
 \sup _{t\in I }(t^{1/p}f^*(t)),   &q=\infty \\
\end{cases}$$
 is finite. In particular, $L_{p,\infty}$, $1<p<\infty$, are called often the {\it weak} $L_p$-spaces.
It is clear that if $1\le q\le p<\infty$ and
 $\psi(t):= t^{q/p}$, then  $L_{p,q}(I) = \Lambda^q_{\psi}(I)$.
In this case $\norm{\cdot}_{p,q}$ defines a norm under which $
L_{p,q}$ is a separable symmetric space; for $1<p<q\le \infty$,  $\norm{\cdot}_{p,q}$ is a quasi-norm which is known to be equivalent to a symmetric norm \cite[Theorem~4.4.6]{Bennett_S}.

Define also the {\it Lorentz sequence space} $\lambda_w^q$, where $1\le q<\infty$ and $w=(w_n)_{n=0}^\infty$ is a decreasing sequence of positive numbers such that $w_0=0$, $\lim_{n\to\infty}w_n=0$ and $\sum_{n=0}^\infty w_n=\infty$, as the set of all sequences $a=(a_n)_{n=0}^\infty$ such that
$$
\|a\|_{\lambda_w^q}:=\Big(\sum_{n=0}^\infty (a_n^*)^qw_n\Big)^{1/q}<\infty,$$ where $(a_n^*)_{n=0}^\infty$ is the decreasing permutation of the sequence $(|a_n|)_{n=0}^\infty$.

Similarly, if $1 <  p < \infty$ and $1 \leq q\le \infty,$ the space  $\ell_{p,q}$ consists of all sequences $a=(a_n)_{n=0}^\infty$, for which
$$
\norm{a}_{\ell_{p,q}}:= \begin{cases}
 \Big(\sum_{n=0}^\infty (a_n^*)^q (n^{q/p}-(n-1)^{q/p})\Big)^{1/q}, &q<\infty;  \\
 \sup _{n=0,1,2,\dots}(a_n^*n^{1/p}),   &q=\infty \\
\end{cases}$$
 is finite.

  We note that the norm of all Orlicz and Lorentz spaces defined above
is monotone with respect to Hardy--Littlewood--P\'{o}lya submajorization
\cite{KPS,Bennett_S,KamMal}.


\subsection{Operator ideals in $B(\cH)$}
 Define by $B(\cH)$  the $*$-algebra of all bounded linear operators acting on a separable infinite-dimensional  Hilbert space $\cH$.
For any $x\in B(\cH)$, we denote by
$\left\{ \mu(n;x) \right\}_{n=0}^\infty$
the sequence of  singular values of $x$, i.e., the eigenvalues of $(x^*x)^{1/2}$ arranged in a non-increasing ordering, counting multiplicity.

Let $F$ be a symmetric sequence space.
We work with the ideal $C_F$ in the algebra $B(\cH)$ defined as follows
$$C_F =\{a\in B(\cH):\ \{\mu(k;a)\}_{k=0}^\infty \in F\}.     $$
 This ideal becomes Banach when equipped with the norm
$\left\|a\right\|_{C_F}=\left\|\mu(a)\right \|_F,~ a\in C_F $\cite{LSZ,Kalton_S}.
When $F=\ell_p$, $p\ge 1$, we denote  by $C_p$ the corresponding operator ideal
$$ \left\{
a\in B(\cH):\ \mu(A)\in \ell_p\right\},\quad \left\|a \right\|_p=\left\| \mu(a ) \right\|_p,$$
which are the   best known examples of   Banach ideals in $B(\cH)$ (called Schatten-von Neumann $p$-class).
When  $F=\ell_{p,q}$, $1<p<\infty$, $1\le q\le\infty$, we denote the corresponding operator ideal  by $C_{p,q}$.

 Quite similarly, we can define also the ideals $C_{\ell_M}$, for each Orlicz function, and $C_{\lambda_{w}^q}$, where $w=(w_n)_{n=0}^\infty$ is a decreasing sequence of positive numbers such that $w_0=1$, $\lim_{n\to\infty}w_n=0$, $\sum_{n=0}^\infty w_n=\infty$ and $1\le q<\infty$.

We use the notion of the right support of the operator $a\in B(\cH)$ defined as follows
$$r(a)=\bigwedge \{ p \mbox{ is a projection in }B(\cH):  ap=a   \}.$$
Operators $a_k\in B(\cH),$ $k\geq0,$ are called disjointly supported from the right if $r(a_{k_1})r(a_{k_2})=0$ for $k_1\neq k_2.$ Equivalently, $|a_{k_1}|\cdot|a_{k_2}|=0$ for $k_1\neq k_2.$ Whenever $a_k,$ $k\geq0,$ are disjointly supported from the right and the subsets $\mathcal{A}_l\subset\mathbb{Z},$ $l\geq0,$ are disjoint (i.e. $\mathcal{A}_{\ell_1}\cap\mathcal{A}_{\ell_2}=\varnothing$ for $l_1\neq l_2$), the elements
$$b_l=\sum_{k\in\mathcal{A}_l}a_k$$
are also disjointly supported from the right.

Modifying  the argument in \cite[Proposition 2.3]{Suk.}, we observe that if
a basic  sequence  in $C_F$ consists of elements which are pairwise disjointly supported from the left and from  the right,
then
this sequence is (isometrically) equivalent  to  the corresponding basic sequence  of pairwise disjointly supported elements in the symmetric sequence space $F$.
However, in general,   this fact fails for  elements in $C_F$ which are   pairwise disjointly supported  only from the left (or, only from the right).

\subsection {Distributional concavity}
\label{Distributional concavity}

Fix a partition $\mathbb{N}=\bigcup_{k=1}^\infty U_k$, where $U_k$, $k=1,2,\dots,$ are infinite disjoint sets, and one-to-one mappings $\kappa_k:\,U_k\to \mathbb{N}$ , $k=1,2,\dots$. Define now as a {\it disjoint sum} of a set $a_k=(a_{k,i})_{i=1}^\infty$, $k=1,2,\dots,$ of sequences of real numbers the sequence
$$
\bigoplus_{k=1}^\infty a_k:=\sum_{k=1}^\infty \sum_{i\in U_k}a_{k,\kappa_k(i)}e_i,$$
where $e_i$ are standard unit vectors.
It is important to observe that the distribution function of a disjoint sum
$\bigoplus_{k=1}^\infty a_k$ does not depend on the particular choice of a partition $\mathbb{N}=\bigcup_{k=1}^\infty U_k$ and mappings $\kappa_k:\,U_k\to \mathbb{N}$ , $k=1,2,\dots$.

In particular, in the case when  $a_k=a$ if $k=1,\dots,n$, and $a_k=0$ if $k>n$, we will denote $
\bigoplus_{k=1}^\infty a_k$ by $a^{\oplus n}$.

Now we can adopt the well-known definition of distributionally concave symmetric function spaces on $[0,1]$ (see e.g. \cite[Definition 2.2]{ASW}) in the case of symmetric sequence spaces as follows.

\begin{defn}\label{definition-of-D-convexity}
A symmetric sequence space $F$ is called  {\it distributionally concave} if there is a constant $c_F>0$ such that for every finite collection $\{a_k\}_{k=1}^n\subset F$ we have
$$
\norm{\bigoplus_{k=1}^n a_k}_F \geq c_F \min_{1 \leq k \leq n} \norm{a_k^{\oplus n}}_F.
$$
\end{defn}

As in the case of function spaces (see e.g. \cite{MSS},  \cite[Proposition 2.5]{ASW}, \cite[Proposition 19]{LS} and \cite[Proposition 2.5]{S-RUC}), it can be easily checked that  all Orlicz sequence spaces  $\ell_M$ and Lorentz spaces $\lambda_w^q$ (in particular, $\ell_{p,q}$, $1\le q\le p<\infty$) are   distributionally concave.

\begin{rem}\label{ncdc}
For arbitrary Hilbert space $\cH$ and every positive integer $n$ we clearly have $\cH^{\oplus n} \simeq \cH$ a natural isomorphism. Then, if $a_k\in B(\cH)$, $k=1,2,\dots,n$, considering the image of the direct sum $\bigoplus_{k=1}^n  a_k$, under this isomorphism, as an element of $\cH$, by the definition of singular values of operators, we obtain
$$\mu\left(\bigoplus_{k=1}^n a_k \right)  =\left(\bigoplus_{k=1}^n \mu(a_k) \right)^*,$$
where $\bigoplus_{k=1}^n \mu(a_k)$ is the  disjoint sum of the sequences $\mu(a_k)$, $k=1,2,\dots,n$\footnote{This fact justifies that for the direct sum we use the same symbol as for a disjoint sum.} and $\left(\oplus_{k=1}^n \mu(a_k) \right)^*$ stands for the decreasing rearrangement of $\oplus_{k=1}^n \mu(a_k) $ (see Subsection~\ref{spaces}). Hence, for any distributionally concave symmetric sequence space $F$ we have
\begin{align}\label{ncdceq}
\norm{\bigoplus_{k=1}^n a_k}_{C_F} = \norm{\mu\left(\bigoplus_{k=1}^n  a_k \right)  }_F &\geq c_F \min_{1 \leq k \leq n} \norm{\mu(a_k)^{\oplus n}}_F \nonumber \\
&=c_F \min_{1 \leq k \leq n} \norm{a_k^{\oplus n}}_{C_F}.
\end{align}

\end{rem}


\subsection{The upper triangular part of $C_F$}

Recall first that a pair $(X_0,X_1)$ of Banach spaces is called a {\it
Banach couple} if $X_0$ and $X_1$ are both linearly and continuously
embedded in some Hausdorff linear topological vector space. In particular, every two symmetric (function or sequence) spaces $E_0$ and $E_1$ form a Banach couple.

A Banach space $X$ is called {\it interpolation} with respect to a Banach couple $(X_0,X_1)$ (in brief, $X\in{\rm Int}(X_0,X_1)$) whenever $X_0 \cap X_1 \subset X \subset X_0+X_1$ and each linear operator $T:\,X_0+X_1\to X_0+X_1$, which is bounded in $X_0$ and in $X_1$, is bounded in $X$. For a further information related to the theory of interpolation of operators we refer to the monographs \cite{Bennett_S,KPS,LT2}.

The following lemma establishes an isomorphic embedding from an operator ideal onto its upper triangular part, which extends \cite[Proposition 1]{AL}.
\begin{lem}\label{triangular lemma} For every separable symmetric sequence space $F\in{\rm Int}(\ell_p,\ell_q),$  $1<p,q<\infty,$ there exists an isomorphic embedding from $C_F$ onto its upper triangular part $U_{F}:=\{ x\in C_F : x_{ij}=0 , i>j \}$.
\end{lem}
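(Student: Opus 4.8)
The statement is the operator-ideal analogue of the classical fact (Arazy--Lindenstrauss \cite[Proposition~1]{AL}) that the upper-triangular truncation is bounded on $C_p$ for $1<p<\infty$, together with the fact that on a space where the truncation is bounded the inclusion $x\mapsto x$ of $C_F$ into the upper triangular part $U_F$, viewed as a map onto its image after a suitable block construction, is an isomorphic embedding. The plan is to reduce everything to a \emph{boundedness of the triangular projection} statement on $C_F$ and then run the standard ``infinitely many diagonal copies'' trick.

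First I would establish that the triangular truncation operator $\mathcal{T}$, sending $x=(x_{ij})$ to its upper triangular part $(x_{ij})_{i\le j}$, is bounded on $C_F$. For $C_p$, $1<p<\infty$, this is classical (it follows, e.g., from the boundedness of the Hilbert transform / the Riesz projection, or from the Macaev-type estimates; see \cite{AL}). To pass to general $F$ I would invoke the interpolation hypothesis $F\in{\rm Int}(\ell_p,\ell_q)$: the map $C_F\mapsto F$, $F\mapsto C_F$ is a well-behaved functor, and more to the point the triangular truncation, realised on matrix units, is a fixed linear operator on $B(\cH)$ that is bounded on $C_p$ and on $C_q$; by an interpolation argument at the level of the symmetric sequence spaces (using that $F$ is interpolation between $\ell_p$ and $\ell_q$, hence $C_F$ is, in the appropriate sense, interpolation between $C_p$ and $C_q$ — this is where one needs the Calderón--Mityagin-type transfer between symmetric sequence spaces and their operator ideals, available in the separable case) one concludes $\mathcal T$ is bounded on $C_F$ with some constant $c=c(F)$. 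I expect \emph{this} step to be the main obstacle: one must be careful that ``$F$ interpolation between $\ell_p$ and $\ell_q$'' genuinely lifts to ``$C_F$ interpolation between $C_p$ and $C_q$'', which typically uses a result identifying $K$-functionals for the couple $(C_p,C_q)$ with those for $(\ell_p,\ell_q)$ composed with singular-value sequences; the separability of $F$ is used to avoid pathologies. Alternatively, if a direct interpolation description of $\mathcal T$ on $C_F$ is delicate, one can cite the literature on boundedness of triangular truncation on symmetrically normed ideals (Gohberg--Krein-type results, and their extensions) for ideals that are interpolation spaces between two Schatten classes.

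Once $\mathcal T$ is bounded on $C_F$, the isomorphic embedding of $C_F$ onto (a subspace isomorphic to) $U_F$ is obtained by the standard device: take a sequence of mutually orthogonal infinite-dimensional subspaces $\cH_n$ of $\cH$ with $\cH=\bigoplus_n\cH_n$, fix for each $n$ a unitary $U_n$ from $\cH$ onto $\cH_n$, and for each $n$ fix a unitary $V_n$ on $\cH$ implementing a ``shift'' so that the compressions $P_nxP_n$ conjugated back become lower-triangular relative to a reordered basis while being supported disjointly (from left and right) across different $n$. Then the map $x\mapsto \bigoplus_n (\text{a triangular-supported copy of }x)$ lands in $U_F$ and, by the remark in the excerpt on elements disjointly supported from left and right being isometrically equivalent to the corresponding disjoint sequence in $F$, together with the stability of $F$ under disjoint sums, this map is bounded below; boundedness above is immediate from disjointness and the symmetric norm. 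To see the image is \emph{all} of $U_F$ up to isomorphism (as the statement ``onto its upper triangular part'' requires), one argues conversely: any element of $U_F$ can, via the boundedness of $\mathcal T$, be written as $\mathcal T$ applied to an element of $C_F$ of comparable norm, so $U_F=\mathcal T(C_F)$ is a complemented subspace of $C_F$, and the Pełczyński decomposition method (using that $C_F\simeq C_F\oplus C_F$ and $U_F\simeq U_F\oplus U_F$, both via disjoint-support arguments, and $C_F\simeq U_F\oplus(\text{lower triangular part})$) yields $C_F\simeq U_F$.

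\textbf{Summary of the key steps, in order.} (i) Show the triangular truncation $\mathcal T$ is bounded on $C_p$ and $C_q$ (classical). (ii) Transfer boundedness to $C_F$ using $F\in{\rm Int}(\ell_p,\ell_q)$ and the identification of interpolation data for $(C_p,C_q)$ with that for $(\ell_p,\ell_q)$ — the crux. (iii) Deduce $U_F=\mathcal T(C_F)$ is a complemented subspace of $C_F$. (iv) Produce, via orthogonal copies and disjoint (left and right) supports, the isomorphisms $C_F\simeq C_F\oplus C_F$, $U_F\simeq U_F\oplus U_F$, and a copy of $C_F$ inside $U_F$ (and vice versa). (v) Conclude $C_F\simeq U_F$ by Pełczyński's decomposition method. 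Throughout, the separability of $F$ guarantees HLP-monotonicity of the norm and the validity of the disjoint-support identifications quoted in the excerpt.
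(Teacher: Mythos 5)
There is a genuine gap at the heart of your plan: the mechanism you propose for placing a copy of $C_F$ inside $U_F$ does not work. You suggest fixing unitaries (shifts, reorderings of the basis) so that compressions of an arbitrary $x\in C_F$ ``become (upper/lower)-triangular relative to a reordered basis''. No such fixed linear device exists: a map of the form $x\mapsto uxv$ with $u,v$ unitary sends the whole of $C_F$ onto $C_F$ again, not into the triangular part, and the classical triangularization of a single compact operator (Aronszajn--Smith/Ringrose) uses a basis depending on $x$, hence is not linear in $x$. Likewise, the isometric ``corner'' embedding of $M_n$ into the upper triangular part of $M_{2n}$ has no infinite-dimensional analogue, since one would need injections $\alpha,\beta:\mathbb{N}\to\mathbb{N}$ with $\alpha(i)\le\beta(j)$ for \emph{all} $i,j$. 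Because of this, your step (iv) is circular: producing a (complemented) copy of $C_F$ inside $U_F$ is precisely the content of the lemma, and the Pe{\l}czy\'{n}ski decomposition scheme in step (v) cannot get started without it. (Your appeal to the remark about left-and-right disjointly supported sequences is also misplaced there: the copies of a general $x$ you describe are not disjointly supported elements of $F$.)

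The paper's proof supplies exactly the missing idea. Writing $T$ for the triangular truncation (bounded on $C_F$ by Arazy's interpolation theorem --- your steps (i)--(ii) are correct and match the paper), $S$ for transposition (an isometry) and $D$ for the diagonal cut (bounded since $Dx\prec\prec x$ and $F$ is HLP-monotone), one has the algebraic identity $x=Tx+(STS)(x-Dx)$. Hence the \emph{linear} map $Ax=Tx\oplus TSx$ into $U_F\oplus U_F$ satisfies $\left\|x\right\|_{C_F}\le 3\left\|Ax\right\|$, using $Dx=DTx\prec\prec Tx$ to absorb the diagonal term; this is the embedding of $C_F$ into $U_F^{\oplus 2}$. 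The proof is then finished by embedding $U_F^{\oplus 2}$ isometrically into $U_F$ via two infinite-rank diagonal projections $p_1,p_2$ with $p_1+p_2=\mathbf{1}$ and the order-preserving relabelings of their supports (monotonicity is what keeps upper triangular matrices upper triangular). Note also that the paper only establishes an isomorphic embedding of $C_F$ \emph{into} $U_F$, which is all that is used later; you do not need, and the paper does not prove, the full isomorphism $C_F\simeq U_F$.
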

\begin{proof} Let $T$ be the upper triangular truncation operator (see  e.g. \cite[(1.1)]{Ar78}).
Recall that $T$ is bounded on $C_F $ \cite[Corollary 4.12]{Ar78}.
Let $S$ be the transposition operator.
In fact, it is an isometry on $C_F$ because it preserves the singular value function.
Let $D$ be the diagonal cut.
Note that the assumption that $F\in {\rm Int}(\ell_p,\ell_q)$  implies that $F$ is monotone with respect to Hardy--Littlewood--P\'{o}lya  submajorization (see e.g. \cite{KPS} and \cite{Bennett_S}, see also  \cite[Theorem 3.1]{Cadilhac}).
Moreover, since  $Dx\prec\prec x$ for every $x$ (see e.g. \cite[Lemma 6.1]{DPS2016}), it follows that  $D$ is bounded on $C_F$.

Define a bounded mapping $A: C_F \to U_{F}\oplus U_{F}:=U_{F}^{\oplus 2}$ by the formula
\begin{align}\label{AXTX}
Ax=Tx\oplus TSx,\quad x\in C_F.
\end{align}
The boundedness of $A$ follows immediately from the boundedness of $T:C_F \to C_F.$ We now show that $A$ is an isomorphic embedding. Since $x=Tx+(STS)(x-Dx),$ it follows that
\begin{align}\label{xaxdx}
\left\|x\right\|_{C_F} \leq \left\|Tx \right\|_{C_F}+ \left\|(TS)(x-Dx) \right\|_{C_F}
&\leq \left\|Tx\right\|_{C_F} + \left\|TSx \right\|_{C_F}+ \left\|T\right\| \left\|Dx \right\|_{C_F} \nonumber\\
&\leq 2\left\|Ax \right\|_{U_{F}^{\oplus 2}}+\left\|T\right\| \left\|Dx \right\|_{C_F }.\end{align}
Since $Dx=DTx\prec\prec Tx$ and since the norm in $F$ is monotone with respect to the Hardy--Littlewood--P\'{o}lya submajorisation,  
it follows that
$$\left\|Dx\right\|_{C_F}\leq \left\|Tx\right\|_{C_F}\stackrel{\eqref{AXTX}}{\leq}\|Ax\|_{U_{F}^{\oplus 2}}.$$
Therefore, by \eqref{xaxdx},  we have
$$\left\|x\right\|_{C_F}\leq3 \left\|Ax\right \|_{U_{F}^{\oplus 2}},$$
which shows  that the bounded operator  $A:C_F \to U_{F}^{\oplus 2}$ is an isomorphic embedding.

We now claim that $U_{F}^{\oplus 2}$ admits an isomorphic embedding into $U_{F}.$ Let $p_1,p_2$ be projections in $B(\cH)$ such that
\begin{enumerate}
\item $p_1$ and $p_2$ are diagonal.
\item $p_1p_2=0.$
\item each $p_k$ has infinite rank, i.e., $\tr(p_k)=\infty$, where $\tr$ is the standard trace on $B(\cH)$.
\item $p_1+p_2={\bf 1} .$
\end{enumerate}
Since $p_k$ is diagonal, it follows that, for a given $l\geq0,$ either $p_ke_{ll}=e_{ll}$ or $p_ke_{ll}=0.$
Set $B_k=\{l\geq0: p_ke_{ll}=e_{ll}\}$, $k=1,2$.
Let $\theta_k:B_k\to\mathbb{Z}_+ \cup \{0\}$ be the monotone bijection, i.e.,
 $$ \theta_k(l)= \left| \{ j\in B_k:j  \le l\}\right|  -1 , ~l\in B_k  . $$
 Define the mapping $V_k:(p_kB(\cH)p_k,{\rm Tr})\to(B(\cH),{\rm Tr})$ by the setting
$$V_k:\sum_{l_1,l_2\in B_k}a_{l_1l_2}e_{l_1l_2}=\sum_{l_1,l_2\in B_k}a_{l_1l_2}e_{\theta_k(l_1)\theta_k(l_2)}.$$
This is a trace-preserving $*$-homomorphism.
Moreover,
$V_k$ preserves the singular value function. Since $\theta_k$ is monotone, it follows that $V_k$ maps upper triangular matrices to the upper triangular ones.
Thus, $V_k:p_kU_{F}p_k\to U_ F$ is an isometry.
 Hence,
$$U_{F}\approx p_1U_{F}p_1,\quad U_{F}\approx p_2U_{F}p_2,$$
$$U_{F}^{\oplus 2}\approx p_1U_{F}p_1\oplus p_2U_{F} p_2\approx p_1U_{F}p_1+p_2U_{F}p_2,$$
where the last equivalence can be seen as follows: for every $x_k\in p_kU_{F}p_k,$ we have $\mu(x_1+x_2)=\mu(x_1\oplus x_2)$ and, therefore, $\left\|x_1+x_2\right\|_{C_F}\approx \left\|x_1\right\|_{C_F}+\left\|x_2\right \|_{C_F}.$

Since we have  established that $C_F $ admits an isomorphic embedding into $U_{F}^{\oplus 2}$,
 it follows that $C_F$ admits an isomorphic embedding into $U_{F}.$  This completes the proof.
\end{proof}

\subsection{ Haar system and Rademacher functions}
\label{Haar, Rademacher}
Recall that the Haar system  \cite[Definition 1.a.4.]{LT1}  can be defined   
for $l=0, 1, \cdots, 2^k-1$ and $k=0,1,\cdots,$  by setting
$$
h_{2^k+l}(t)=
\begin{cases}
1,& l \cdot 2^{-k} \le t < (l+\frac 12)2^{-k}, \\
-1,& (l+\frac12)2^{-k}  \le t <  (l+1) 2^{-k},\\
0,& \mbox{otherwise.}
\end{cases}
$$

\begin{lem}\label{right disjoint} Let $F$ be a separable  symmetric  sequence space and  $F\in{\rm Int}(\ell_p,\ell_q)$  for some $1<p,q<\infty$.
If a separable space $E(0,1)$ isomorphically embeds into $U_{F},$ then there is another isomorphic embedding of $E(0,1)$ into $U_F$ such that images of the Haar basis are disjointly supported from the right.
\end{lem}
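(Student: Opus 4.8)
The plan is to mimic the classical Arazy--Lindenstrauss construction \cite[Proposition 2]{AL} together with a standard gliding-hump/stability argument. Suppose $J\colon E(0,1)\hookrightarrow U_F$ is an isomorphic embedding, and write $\tilde h_n:=J(h_n)$ for the images of the Haar functions. Since $E(0,1)$ is separable and $h_0,h_1,h_2,\dots$ is a (conditional) Schauder basis of $E(0,1)$, the sequence $(\tilde h_n)$ is a basic sequence in $U_F$ equivalent to the Haar basis. The first step is to replace each $\tilde h_n$ by a finite-rank approximant: because $F$ is separable, every element of $C_F$ (hence of $U_F$) is a norm limit of its finite sections, so for any prescribed error sequence $\epsilon_n>0$ we can find finitely supported upper-triangular matrices $y_n$ with $\norm{\tilde h_n-y_n}_{C_F}<\epsilon_n$. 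By the principle of small perturbations, if $\sum_n\epsilon_n/\norm{J^{-1}}$ is small enough, $(y_n)$ is still a basic sequence equivalent to $(\tilde h_n)$, hence to the Haar basis, and $[y_n]$ is still isomorphic to a subspace onto which $E(0,1)$ embeds; so we may assume from the outset that each $\tilde h_n$ has finite rank.

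The second step is the key disjointification. Working through the Haar system level by level (first $h_0$, then $h_1$, then the pair $h_2,h_3$, then $h_4,\dots,h_7$, and so on), we successively apply compressions by projections of the form $P=\sum_{i\in S}e_{ii}$ with $S$ a cofinite or suitably chosen subset of $\mathbb{Z}_+$, using the fact that such diagonal truncations are bounded on $C_F$ (this uses $F\in{\rm Int}(\ell_p,\ell_q)$, which guarantees monotonicity with respect to Hardy--Littlewood--P\'olya submajorization, exactly as invoked in the proof of Lemma \ref{triangular lemma}). Since each $\tilde h_n$ has finite support, its right support $r(\tilde h_n)$ is a finite-rank diagonal projection (after a further compression we may even arrange the supports to be "staircase" blocks); we can therefore choose an infinite increasing sequence of indices and pass to compressions $q_n\tilde h_n q_n$ whose right supports $r(q_n\tilde h_n q_n)$ are pairwise orthogonal, while controlling $\norm{\tilde h_n-q_n\tilde h_n q_n}_{C_F}$ to be as small as we like by pushing the "tails" far enough out. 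The monotone bijections $\theta$ used in the proof of Lemma \ref{triangular lemma} let us re-index so that the resulting operators still lie in $U_F$ (monotone relabelling of rows and columns preserves upper-triangularity and the singular value function). Combining this with the first step, another application of the small-perturbation principle shows that the new sequence $(z_n)$, with $z_n$ disjointly supported from the right, is still equivalent to the Haar basis and spans a subspace isomorphic to one containing an isomorphic copy of $E(0,1)$; defining the new embedding on the Haar basis by $h_n\mapsto z_n$ and extending by the equivalence of basic sequences completes the argument.

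The main obstacle is making the disjointification compatible with the upper-triangular structure. A naive compression that kills a "tail" of rows and columns generally does not send $U_F$ into $U_F$ after re-indexing unless the relabelling is monotone, and one must verify that finitely-supported upper-triangular matrices can be moved to have pairwise orthogonal right supports without the left supports (or the triangularity) getting in the way; this is precisely where the boundedness of the diagonal truncation and of the upper-triangular truncation $T$ on $C_F$, together with the monotone re-indexing trick from Lemma \ref{triangular lemma}, must be used carefully. A secondary technical point is the bookkeeping of perturbation errors across infinitely many Haar levels: one fixes a summable sequence $(\epsilon_n)$ in advance with $\sum_n\epsilon_n$ much smaller than $\norm{J}^{-1}\norm{J^{-1}}^{-1}$, performs all approximations and compressions within those budgets, and then invokes the perturbation lemma once at the end.
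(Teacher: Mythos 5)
Your first step (approximating the images $\tilde h_n=J(h_n)$ by finitely supported upper--triangular matrices and invoking the small--perturbation principle) is fine, but the second, crucial step --- the disjointification --- has a genuine gap. After the finite-rank reduction, the right supports of the $y_n$ are finite diagonal projections, but nothing prevents \emph{all} of them from charging a fixed initial block of coordinates: for instance every $\tilde h_n$ may have $\left\|P_{m}\tilde h_n\right\|_{C_F}$ bounded below for some fixed $m$. In that situation no compression $q_n\tilde h_nq_n$ with pairwise orthogonal right supports can be a small perturbation of $\tilde h_n$, because at most finitely many of the $q_n$ can retain that initial block. The usual way out --- pass to a subsequence along which the elements are essentially supported on successive blocks (Bessaga--Pe{\l}czy\'nski) --- is not available here, since a subsequence of the Haar system is not equivalent to the whole Haar basis, and you need \emph{all} of $E(0,1)$, not just a subspace spanned by a subsequence. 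So the claim that one can control $\left\|\tilde h_n-q_n\tilde h_nq_n\right\|_{C_F}$ while making the right supports of all the $q_n\tilde h_nq_n$ pairwise orthogonal ``by pushing the tails far enough out'' does not follow from anything you have established.

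The paper's proof avoids perturbing the given images altogether. It first records that $\{U_Fe_{ll}\}_{l\ge0}$ is an \emph{unconditional} finite-dimensional decomposition of $U_F$ (Arazy), and then invokes a reproducibility-type theorem for the Haar system (\cite[Theorem 5.2]{DFPS}) to produce a \emph{new} basic sequence $z_m\in U_F\bigl(\sum_{l=q_m}^{q_{m+1}-1}e_{ll}\bigr)$, block with respect to this FDD and equivalent to the Haar basis; blockness immediately gives disjoint right supports, and the new embedding is $\sum\alpha_mh_m\mapsto\sum\alpha_mz_m$. The mechanism behind that theorem is a gliding hump along a dyadic \emph{tree}, exploiting the self-similarity of the Haar system (one replaces Haar functions by Haar functions supported on much smaller dyadic intervals, chosen so that their images essentially live on far-out blocks of the FDD --- cf.\ the counting argument of Lemma \ref{finite dim}), not a perturbation of the images of the original Haar functions. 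If you want to give a self-contained argument, this reproducibility step is the idea you would need to supply; as written, your proof does not close.
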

\begin{proof} Let $e_{ll},$ $l\geq0,$ be the $l$-th matrix unit on the diagonal. It is known  (see \cite[Lemma 4.5]{Ar78}) that $\{U_{F }e_{ll}\}_{l\geq0}$ is an unconditional finite dimensional decomposition (see e.g. \cite[Chapter 1.g]{LT1} for definition) in $U_{F }.$
By \cite[Theorem 5.2]{DFPS}, 
 there exists an increasing sequence $\{q_m\}_{m\geq0}\subset\mathbb{Z}_+$ and a basic sequence $z_m\in U_F (\sum_{l=q_m}^{q_{m+1}-1}e_{ll})$ which is equivalent to the Haar basis.
 It is immediate that the support $r(z_m)\leq \sum_{l=q_m}^{q_{m+1}-1}e_{ll},$ $m\geq0.$
 Therefore, $r(z_{m_1})r(z_{m_2})=0$ for $m_1\neq m_2.$
 Recall that the Haar system is a Schauder  basis of any separable symmetric function space on~$(0,1)$~\cite[Proposition 2.c.1]{LT2}.
 Therefore, the isomorphism is given by the formula
$$\sum_{m\geq0}\alpha_mh_m\mapsto\sum_{m\geq0}\alpha_mz_m$$
satisfies all desired conditions.
\end{proof}

Let $r_k(t)$, $k=0,1,2,\dots$, $t\in [0,1]$, be the Rademacher functions. Then, for all positive integers $n\geq k$ and $j=0,1,\dots,2^k-1$, we set
$r_{n,k,j}:=r_n\chi_{(\frac{j}{2^k},\frac{j+1}{2^k})}$.

\begin{lem}\label{Rad}
Let $E:=E(0,1)$ be a symmetric space with the fundamental function $\phi_E$. Then, for every $k\in\mathbb{N}$ and all $j=0,1,\dots,2^k-1$, we have
\begin{align}\label{l2.7}
\left\|
\sum_{n=k}^\infty a_nr_{n,k,j}
\right\|_E\ge \frac{1}{32}\phi_E(2^{-k})
\left\|
(a_n)
\right\|_{\ell_2}.
\end{align}
\end{lem}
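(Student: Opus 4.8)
The plan is to reduce the estimate to a fixed dyadic subinterval and there compare with the Rademacher functions on the whole interval $(0,1)$, using that the functions $\{r_{n,k,j}\}_{n\ge k}$ are, up to rescaling, a copy of the Rademacher sequence sitting on $\Delta_k^j=(j2^{-k},(j+1)2^{-k})$. First I would fix $k$ and $j$ and observe that for $n\ge k$ the function $r_{n,k,j}=r_n\chi_{\Delta_k^j}$ is supported on $\Delta_k^j$, has constant modulus $1$ there, and that the sequence $(r_{n,k,j})_{n\ge k}$, viewed as functions on $\Delta_k^j$ with normalized Lebesgue measure, is equidistributed exactly like the Rademacher system on $(0,1)$. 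Hence $\sum_{n=k}^\infty a_n r_{n,k,j}$ is supported on $\Delta_k^j$ and is equimeasurable with $\sigma_{2^{-k}}\bigl(\sum_{m=0}^\infty a_{m+k} r_m\bigr)$, i.e.\ with a $2^{-k}$-scaled copy of a Rademacher sum.

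The second step is a lower bound for a Rademacher sum in an \emph{arbitrary} symmetric space $E(0,1)$ in terms of its $\ell_2$-norm. The standard fact is that for any symmetric function space $E(0,1)$ one has $\bigl\|\sum_m c_m r_m\bigr\|_{E}\ge C\,\|(c_m)\|_{\ell_2}$ with an absolute constant $C$; indeed by the Khintchine inequality in $L_1$, $\bigl\|\sum_m c_m r_m\bigr\|_{L_1}\ge \tfrac{1}{\sqrt2}\|(c_m)\|_{\ell_2}$ (with the standard constant, one may use $\tfrac1{\sqrt2}$ or a slightly worse explicit constant), and since $\|\chi_{(0,1)}\|_E=1$ the embedding $E(0,1)\hookrightarrow L_1(0,1)$ has norm at most $1$, so $\|f\|_{L_1}\le\|f\|_E$ for all $f\in E$. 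Combining, $\bigl\|\sum_m c_m r_m\bigr\|_E\ge \tfrac1{\sqrt2}\|(c_m)\|_{\ell_2}$; the constant $\tfrac1{32}$ in the statement is deliberately crude and leaves ample room, so I need not chase the optimal value.

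The third step is to transfer this from $(0,1)$ to $\Delta_k^j$ using the dilation operator. Writing $g=\sum_{m=0}^\infty a_{m+k} r_m$ on $(0,1)$ and $f=\sum_{n=k}^\infty a_n r_{n,k,j}$ on $\Delta_k^j$, the function $f$ is equimeasurable with $\sigma_{2^{-k}} g$ restricted appropriately; more precisely $f^* = (\sigma_{2^{-k}}g)^*$. The point is to bound $\|\sigma_{2^{-k}}g\|_E$ from below in terms of $\|g\|_E$. Since $\sigma_{2^k}\sigma_{2^{-k}}=\mathrm{id}$ on functions supported in $(0,2^{-k})$ and $\|\sigma_{2^k}\|_{E\to E}\le 2^k$, this only gives $\|\sigma_{2^{-k}}g\|_E\ge 2^{-k}\|g\|_E$, which is too weak. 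Instead I would use the multiplicative relation with the fundamental function: for a characteristic function $\sigma_{2^{-k}}\chi_{(0,1)}=\chi_{(0,2^{-k})}$, so $\|\sigma_{2^{-k}}\chi_{(0,1)}\|_E=\phi_E(2^{-k})$, and then argue via the quasi-concavity of $\phi_E$ together with the pointwise structure of Rademacher sums (they are "flat" in the sense that $|g|$ oscillates around $\|(c_m)\|_{\ell_2}$). Concretely, I would bound $\|f\|_E$ below by the $E$-norm of its restriction to a level set: by Khintchine's inequality with the distributional form, $g$ exceeds $\tfrac12\|(c_m)\|_{\ell_2}$ on a subset of $(0,1)$ of measure bounded below by an absolute constant $\delta_0>0$ (Paley--Zygmund), hence $f\ge \tfrac12\|(c_m)\|_{\ell_2}$ on a subset of $\Delta_k^j$ of measure $\ge \delta_0 2^{-k}$, so $\|f\|_E\ge \tfrac12\|(c_m)\|_{\ell_2}\,\phi_E(\delta_0 2^{-k})\ge \tfrac12\delta_0\|(c_m)\|_{\ell_2}\,\phi_E(2^{-k})$, using that $\phi_E(\delta_0 t)\ge \delta_0\phi_E(t)$ by quasi-concavity of $\phi_E$. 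Choosing the explicit Paley--Zygmund constants (one may take the level $\tfrac13\|(c_m)\|_{\ell_2}$ with measure $\ge \tfrac1{27}$, or similar), the product of the resulting constants is comfortably above $\tfrac1{32}$, which proves \eqref{l2.7}.

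The main obstacle is precisely the last step: the naive dilation estimate loses a factor $2^{-k}$ rather than the fundamental-function factor $\phi_E(2^{-k})$, and one cannot afford that loss. The fix is to avoid operator-norm estimates for $\sigma_\tau$ altogether and instead exploit the "flatness" of Rademacher sums via a Paley--Zygmund-type lower bound on a level set, so that one only needs the behaviour of $\phi_E$ on characteristic functions, where quasi-concavity gives exactly the homogeneity $\phi_E(\delta t)\ge\delta\,\phi_E(t)$ needed to strip off the absolute constant $\delta_0$. I would also double-check the equimeasurability claim $f^*=(\sigma_{2^{-k}}g)^*$ carefully, since $r_{n,k,j}$ for $n=k,\dots$ involves the Rademacher functions whose restriction to $\Delta_k^j$ must genuinely reproduce the full Rademacher system — this is true because $r_n$ for $n>k$ restricted to a dyadic interval of rank $k$ is (a translate of) $r_{n-k}$ rescaled, and $r_k$ is constant $\pm1$ on $\Delta_k^j$, so in fact one starts the independent sign sequence from $r_k$ as written.
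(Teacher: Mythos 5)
Your final argument is exactly the paper's proof: apply the Paley--Zygmund inequality to show that $\bigl|\sum_{n\ge k}a_nr_{n,k,j}\bigr|\ge\frac12\|(a_n)\|_{\ell_2}$ on a subset of $\Delta_k^j$ of measure at least $\frac{1}{16}\cdot 2^{-k}$, and then use the quasi-concavity bound $\phi_E(\delta t)\ge\delta\,\phi_E(t)$ to get the factor $\frac12\cdot\frac1{16}=\frac1{32}$; the preliminary discussion of dilation operators is correctly discarded as too lossy. The only caveat is your parenthetical alternative (level $\frac13$ with measure $\frac1{27}$), whose product $\frac1{81}$ is \emph{not} above $\frac1{32}$ --- one must keep the level $\frac12$ and the measure bound $\frac1{16}$ to land exactly on the stated constant.
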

\begin{proof}
Denoting
$$
B:=\left\{
t\in [0,1]:\,\Big|\sum_{n=k}^\infty a_nr_{n,k,j}(t)\Big|\ge\frac12\left\|(a_n)\right \|_{\ell_2}
\right\},$$
by the Paley--Zygmund inequality (see e.g. \cite[p.~8]{Kah}), we have
\begin{align}\label{MB}
m(B)&= m\{t\in [0,1]:\,\Big|\sum_{n=k}^\infty a_nr_{n,k,j}(t)\Big|\ge\frac12\left\|(a_n)\right\|_{\ell_2}\}\nonumber\\
&= 2^{-k}m\{t\in [0,1]:\,\Big|\sum_{n=k}^\infty a_nr_{n}(t)\Big|\ge\frac12\left\|(a_n)\right\|_{\ell_2}\}
\ge\frac{1}{16}\cdot 2^{-k}.
\end{align}
 Hence, from the quasi-concavity of $\phi_E$ it follows
\begin{align*}
\left\|\sum_{n=k}^\infty a_nr_{n,k,j}\right\|_E&\ge \left\| \frac12 \left\|(a_n)\right\|_{\ell_2} \chi_B \right\|_E
=\frac12 \left\|(a_n)\right\|_{\ell_2} \left\|\chi_B \right\|_E\\
&\stackrel{\eqref{MB}}{\ge} \frac12 \left\|(a_n)\right\|_{\ell_2}\phi\big(\frac{2^{-k}}{16}\big)\ge \frac{1}{32}\left\|(a_n)\right \|_{\ell_2} \phi_E(2^{-k}) .
\end{align*}
\end{proof}

\subsection{$p$-convexity, $q$-concavity and close notions}
\label{convexity}
Recall that a symmetric  (sequence/function/operator) space $E$ is said to be  {\it $p$-convex} (resp. {\it $q$-concave})~\cite{LT1,LT2,DDS2014,ArLin}
if there exists a constant $K>0$ such that, for every $x_1,\cdots,x_n$ in $E$, we have
$$ \left\| \left(\sum_{k=1}^n  |x_k|^p  \right)^{1/p}\right\|_E   \le K \left( \sum_{k=1}^n \left\| x_k \right\|_E ^p   \right)^{1/p}$$
(resp.
$$   \left( \sum_{k=1}^n \left\| x_k \right\|_E ^q   \right)^{1/q}\le K\left\| \left(\sum_{k=1}^n  |x_k|^q  \right)^{1/q}\right\|_E).$$

Introduce also the following weaker notions. A symmetric (sequence/function/operator) space $E$ is said to satisfy an {\it upper $p$-estimate} (resp. a {\it lower $q$-estimate}) \cite{KMP, LT1, LT2, DDS2014} if there exists a constant $K>0$ such that, for every $x_1,\cdots,x_n$ of pairwise  left disjointly supported elements in $E$,
$$ \left\| \sum_{k=1}^n x_k \right\|_E   \le K \left( \sum_{k=1}^n \left\| x_k \right\|_E ^p   \right)^{1/p}$$
(resp.
$$   \left( \sum_{k=1}^n \left\| x_k \right\|_E ^q   \right)^{1/q}\le K\left\| \sum_{k=1}^n x_k \right\|_E).$$
Clearly, if a symmetric  (sequence/function/operator) space $E$ is $p$-convex (resp. $q$-concave), then it admits an upper $p$-estimate (resp. a lower $q$-estimate). Conversely, if a symmetric function/sequence space $E$ satisfies an upper $p$-estimate, $p>1$ (resp. a lower $q$-estimate, $q<\infty$), then $E$ is $p_1$-convex for each $p_1\in (1,p)$ (resp. $q_1$-concave for each $q_1\in (q,\infty)$) \cite[Theorem~1.f.7]{LT2}.

According to \cite{MSS}, we will refer to an Orlicz function $M$ as {\it $p$-convex} (resp. {\it $q$-concave}) if the function $t\to M(t^{1/p})$ (resp. $t\to M(t^{1/q})$) is convex (resp. concave) on $(0,\infty)$. By \cite[Lemma~20]{MSS}, $M$ is equivalent to a $p$-convex (resp. $q$-concave) function on $(0,\infty)$  if and only if there exists a constant $C>0$  such that for all $t>0$ and $0<s\le 1$ we have
\begin{equation}\label{eq15a}
M(st)\le Cs^p M(t)
\end{equation}
(resp.
\begin{equation}\label{eq15b}
s^q M(t)\le C M(st)).
\end{equation}
An Orlicz function $M$ is equivalent to a $p$-convex (resp. $q$-concave) function on the interval $[0,1]$  if and only if \eqref{eq15a} (resp. \eqref{eq15b}) holds for all $0<t\le 1$ and $0<s\le 1$ (see also \cite[Lemma 6]{AS14} and \cite[Lemma 11]{JSZ17}). This is equivalent to the $p$-convexity (resp. $q$-concavity) of the sequence Orlicz space $\ell_M$ (see e.g. \cite[pages 121 and 124]{KMP97}). Similarly, an Orlicz function $M$ is equivalent to a $p$-convex (resp. $q$-concave) function on the interval
$[1,\infty)$ if and only if \eqref{eq15a} (resp. \eqref{eq15b}) holds for all $t\ge st\ge 1$, and this is equivalent to the $p$-convexity (resp. $q$-concavity) of the Orlicz space $L_M:=L_M(0,1)$.

It is well known that either of the spaces $L_{p,q}(I)$, where $I=(0,1)$ or $I=(0,\infty)$, and $\ell_{p,q}$, $1<p<\infty$, $1\le q<\infty$, is $q$-convex and admits a lower $p$-estimate in the case when $1\le q\le p<\infty$, and it is $q$-concave and admits an upper $p$-estimate in the case when $1\le p<q<\infty$ (see e.g. \cite{Creekmore,KMP,LT2,LT1,Dilworth}).

It is shown in \cite[Corollary 5.3]{DDS2014} (see also \cite{ArLin}) that if $F$ is a separable symmetric sequence space   satisfying an upper $p$-estimate, $p\in (1,2]$, then the corresponding operator space $C_F$ satisfies an upper $r$-estimate for each $1\le r<p$.
If $F$ is a  $q$-concave separable symmetric sequence space  for some $q\ge 2$, then $C_F$ satisfies a lower $q$-estimate.


Under a slightly stronger assumption, for any $p>0$, we have the  following analogue.
\begin{lemma}\label{noncomest}
Let $F$ be a symmetric (or symmetrically quasi-normed) sequence  space having an upper $p$-estimate for some $p>0$ (or a lower $q$-estimate for some $q>0$).
Then, for every finite sequence $x_1,\cdots,x_n$ of pairwise left and right disjointly supported 
 elements in $C_F$, we have
$$ \left\| \sum_{k=1}^n x_k \right\|_{C_F}   \le D_F \left( \sum_{k=1}^n \left\| x_k \right\|_{C_F} ^p   \right)^{1/p},$$
respectively,
$$   \left( \sum_{k=1}^n \left\| x_k \right\|_{C_F} ^q   \right)^{1/q}\le D_F'\left\| \sum_{k=1}^n x_k \right\|_{C_F}.$$
\end{lemma}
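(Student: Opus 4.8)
The plan is to reduce the non-commutative statement to the corresponding estimate for the sequence space $F$ via the key fact recorded just before Definition~\ref{definition-of-D-convexity}: if the operators $x_1,\dots,x_n$ are pairwise disjointly supported from the left \emph{and} from the right, then the basic sequence $(x_k)$ in $C_F$ is (isometrically) equivalent to the corresponding sequence of pairwise disjointly supported elements in $F$. Concretely, let $y_k\in F$ be a copy of $\mu(x_k)$ placed on a block $\Gamma_k\subset\mathbb{N}$, where the $\Gamma_k$ are chosen pairwise disjoint; then $\norm{x_k}_{C_F}=\norm{y_k}_F$ for each $k$ and, crucially, $\mu\big(\sum_{k=1}^n x_k\big)=\big(\sum_{k=1}^n y_k\big)^*$, so that $\norm{\sum_{k=1}^n x_k}_{C_F}=\norm{\sum_{k=1}^n y_k}_F$. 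This last identity is exactly the observation used in Remark~\ref{ncdc} (applied there to disjoint sums); here it follows because disjointly-supported-from-both-sides operators have the property that the singular values of the sum are the decreasing rearrangement of the concatenation of the individual singular value sequences.

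Once this reduction is in place, the upper-$p$-estimate case is immediate: the $y_k$ are pairwise (left-)disjointly supported in $F$, so by the hypothesis on $F$,
\[
\Bigl\|\sum_{k=1}^n x_k\Bigr\|_{C_F}=\Bigl\|\sum_{k=1}^n y_k\Bigr\|_F\le K\Bigl(\sum_{k=1}^n\norm{y_k}_F^p\Bigr)^{1/p}=K\Bigl(\sum_{k=1}^n\norm{x_k}_{C_F}^p\Bigr)^{1/p},
\]
so one may take $D_F=K$. The lower-$q$-estimate case is the exact dual computation:
\[
\Bigl(\sum_{k=1}^n\norm{x_k}_{C_F}^q\Bigr)^{1/q}=\Bigl(\sum_{k=1}^n\norm{y_k}_F^q\Bigr)^{1/q}\le K\Bigl\|\sum_{k=1}^n y_k\Bigr\|_F=K\Bigl\|\sum_{k=1}^n x_k\Bigr\|_{C_F},
\]
giving $D_F'=K$. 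In the symmetrically quasi-normed setting the same lines go through with the quasi-norm constant absorbed into $D_F$, $D_F'$; note that the lower-$q$-estimate direction does not even need a quasi-triangle inequality, since it only uses monotonicity of the $F$-quasi-norm under the (order) passage from $\sum y_k$ to individual $y_k$ together with the postulated lower estimate.

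The only real point requiring care --- and the step I would write out most carefully --- is the justification that $\mu\big(\sum_{k=1}^n x_k\big)$ really is the decreasing rearrangement of the concatenated singular-value sequences when the $x_k$ are disjointly supported from both sides. This is where the ``from the left and from the right'' hypothesis is essential: writing $p_k=r(x_k)$ and $q_k=l(x_k)$ (the right and left supports), disjointness gives $p_kp_j=q_kq_j=0$ for $k\ne j$, so $\sum_{k}x_k$ acts as a block-diagonal operator with blocks $q_kx_kp_k$, and the spectral theorem for $|\sum_k x_k|=\sum_k|x_k|$ (the cross terms vanish) yields the claim. Referring to the ``Modifying the argument in \cite[Proposition~2.3]{Suk.}'' remark in Subsection~\ref{Distributional concavity} lets me cite this rather than reprove it. With that in hand the lemma follows in a few lines; accordingly I would keep the written proof short, stating the reduction, invoking the cited disjointification fact, and then displaying the two chains of inequalities above.
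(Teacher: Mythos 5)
Your proposal is correct and follows essentially the same route as the paper: the paper's proof consists precisely of the identity $\mu\bigl(\sum_{k=1}^n x_k\bigr)=\bigl(\bigoplus_{k=1}^n\mu(x_k)\bigr)^*$ for operators disjointly supported from both sides, followed by an application of the upper $p$-estimate (resp.\ lower $q$-estimate) of $F$ to the disjointly supported sequences $\mu(x_k)$. Your additional justification of that identity via vanishing cross terms is the same point the paper delegates to the remark modifying \cite[Proposition~2.3]{Suk.}.
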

\begin{proof}
Since $x_1,\cdots,x_n$ are  pairwise left and right   disjointly supported, it follows that
\begin{align}\label{dis}\mu\left(\sum_{k=1}^n x_k\right)  = \left(\bigoplus_{k=1}^n \mu(x_k)  \right)^* .
\end{align}
By the assumption that  $F$ satisfies an upper $p$-estimate, we obtain that
$$ \left\| \sum_{k=1}^n x_k \right\|_{C_F} \stackrel{\eqref{dis}}{=}   \left\|\oplus_{k=1}^n \mu(x_k)\right\|_{F} \le D_F \left( \sum_{k=1}^n \left\| \mu(x_k)  \right\|_{F}  ^p   \right)^{1/p} =  D_F\left( \sum_{k=1}^n \left\| x_k \right\|_{C_F} ^p   \right)^{1/p}. $$
The same argument yields the case for lower $q$-estimate.
\end{proof}

The notion of $p$-convexity is closely connected with the important concept of Rademacher type. A Banach space $X$ is said to have {\it Rademacher $q$-type}, where $1\le q\le 2$, if there exists a constant $K> 0$ such that for all $n\in\mathbb{N}$ and $x_j\in X$, $j=1,2,\dots,n$, we have
$$
\int_0^1\left\|\sum_{j=1}^nx_jr_j(t)\right \|_{X}\,dt\le K\Big(\sum_{j=1}^n\left\|x_j\right\|_{X}^q\Big)^{1/q}
$$
Clearly, every Banach space has Rademacher $1$-type. Moreover, if $X$ has Rademacher $q_1$-type and $1\le q_2<q_1\le 2$, then $X$ possesses also Rademacher $q_2$-type.

Assume that a symmetric sequence space $F$ has Rademacher $2$-type. Then, by  \cite[Corollary from Theorem~4]{GT}, the ideal $C_F$ has Rademacher $2$-type as well. This fact combined together with the Kahane-Khintchine inequality (see e.g. \cite[Theorem~II.4]{Kah} or \cite[Theorem~1.e.13]{LT2}) implies that for every $1\le p<\infty$ there exists $K_0\ge 1$ such that for all $n\in\mathbb{N}$ and $x_j\in C_F$, $j=1,2,\dots,n$, we have
\begin{equation}
\label{eq14a}
\Big(\int_0^1\Big\|\sum_{j=1}^nx_jr_j(t)\Big\|_{C_F}^p\,dt\Big)^{1/p}\le K_0\Big(\sum_{j=1}^n\left\|x_j\right\|_{C_F}^2\Big)^{1/2}.
\end{equation}

Let $C_F$ be the ideal of the algebra $B(\cH)$ generated by a symmetric sequence space $F$. We define projections $R_m$ and $P_m$, $m=0,1,2,\cdots$, on $C_F$ by setting: if $x=(x_{i,j})\in C_F$, then
\begin{align*}R_m x = \left\{
    \begin{array}{ll}
           x_{i,j}, &~\max(i,j) \le  m, \\
          0, &~ \rm otherwise, \\
    \end{array}
    \right.
 \end{align*}
and
\begin{align*}P_m x = \left\{
    \begin{array}{ll}
           x_{i,j}, &~\min(i,j) \le  m, \\
          0, &~ \rm otherwise, \\
    \end{array}
    \right.
 \end{align*}
Clearly,
for any $m=0,1,2,\cdots$, 
$\left\|R_m\right\|_{C_F\to C_F}\le 1$ and $\left\|P_m\right\|_{C_F\to C_F} \le 2$ (see e.g.  (2.7) and (2.8) in \cite{AL}). 
Moreover, $(P_{m} -P_n )x$ and $(P_{n} -P_l) x$ are disjointly supported from the left for any upper   triangular operator $x\in B(\cH)$ and  any $l\le n \le m$.

The following result is an extension of \cite[Lemma 4]{AL}.

\begin{proposition}
\label{lemma main} Let $F$ be a separable symmetric sequence space admitting an upper $p$-estimate and having Rademacher $2$-type.
Suppose that $\{u_n\}_{n=1}^\infty\subset F$ is a sequence of upper triangular operators such that $\left\|u_n\right\|_{C_F }\leq M$
and
\begin{equation}\label{eq15}
\left\| \sum_{n=1}^\infty  a_n u_n \right\|_{C_F}\geq M^{-1}\left\| a \right\|_{\ell_2}, \;\;\mbox{for all}~ a=(a_n)_{n=1}^\infty \in \ell_2.
\end{equation}
Then, if $0<\gamma < (MK_0)^{-1}$ ($K_0$ is the constant from inequality \eqref{eq14a}), we can find $m\in\mathbb{N}$ such that
\begin{align}\label{eq16}
\norm{P_m u_n} _{{C_F} }\geq\gamma, \;\;n=1,2,\dots
\end{align}
\end{proposition}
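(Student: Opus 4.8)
The plan is to argue by contradiction: suppose that for every $m\in\mathbb{N}$ there exists $n=n(m)$ with $\norm{P_m u_{n(m)}}_{C_F}<\gamma$. Since each $u_n$ is an upper triangular operator and $u_n=R_N u_n+(\mathbf{1}-R_N)u_n$ with $R_N\to\mathbf{1}$ strongly on $C_F$ (using separability of $F$), for each fixed $n$ we have $\norm{(\mathbf{1}-P_m)u_n}_{C_F}\to0$ as $m\to\infty$; in particular $\norm{P_m u_n}_{C_F}\to\norm{u_n}_{C_F}$. Thus the hypothetical bad indices $n(m)$ must tend to infinity, and by passing to a subsequence we obtain an increasing sequence $n_1<n_2<\cdots$ and an increasing sequence $0=m_0<m_1<m_2<\cdots$ such that $\norm{P_{m_k}u_{n_k}}_{C_F}<\gamma$ while also (enlarging $m_k$ if necessary and using the strong convergence $P_m\to\mathbf{1}$) $\norm{(\mathbf{1}-P_{m_{k+1}})u_{n_k}}_{C_F}$ is as small as we like, say $<2^{-k}$. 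Consequently the ``middle blocks'' $v_k:=(P_{m_{k+1}}-P_{m_k})u_{n_k}$ satisfy $\norm{u_{n_k}-v_k}_{C_F}<\gamma+2^{-k}$, and — this is the point where I would invoke the remark at the end of the subsection — the operators $v_k$ are pairwise left-disjointly supported, being of the form $(P_{m_{k+1}}-P_{m_k})x$ for the upper triangular $x=u_{n_k}$ with nested index sets.

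Next I would exploit the two structural hypotheses on $F$. Since $F$ has Rademacher $2$-type, inequality \eqref{eq14a} gives, for any scalars $a=(a_n)\in\ell_2$ with finitely many nonzero entries,
\begin{equation*}
\Big(\int_0^1\Big\|\sum_k a_k r_k(t)(u_{n_k}-v_k)\Big\|_{C_F}^1\,dt\Big)\le K_0\Big(\sum_k|a_k|^2\norm{u_{n_k}-v_k}_{C_F}^2\Big)^{1/2}\le K_0(\gamma+o(1))\norm{a}_{\ell_2}.
\end{equation*}
Hence there is a choice of signs $\theta_k=\pm1$ with $\big\|\sum_k a_k\theta_k(u_{n_k}-v_k)\big\|_{C_F}\le K_0(\gamma+o(1))\norm{a}_{\ell_2}$; absorbing $\theta_k$ into $a_k$ we may simply record that $\big\|\sum_k a_k(u_{n_k}-v_k)\big\|_{C_F}\le K_0(\gamma+o(1))\norm{a}_{\ell_2}$ for a suitable sign pattern. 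On the other hand, since the $v_k$ are pairwise left-disjointly supported and $F$ admits an upper $p$-estimate, I can pass to a further subsequence on which the blocks are \emph{also} right-disjointly supported — this is achievable because each $v_k$ has finite rank up to an arbitrarily small perturbation (truncating by $R_{N_k}$ for large $N_k$ costs less than $2^{-k}$ in norm), so one may inductively thin out to make the column supports disjoint as well — and then Lemma~\ref{noncomest} yields $\big\|\sum_k a_k v_k\big\|_{C_F}\le D_F\big(\sum_k|a_k|^p\norm{v_k}_{C_F}^p\big)^{1/p}\le D_F M'\norm{a}_{\ell_p}$, where $M'$ bounds $\norm{v_k}_{C_F}\le\norm{u_{n_k}}_{C_F}+\norm{u_{n_k}-v_k}_{C_F}\le M+\gamma+1$.

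Finally I combine the estimates with hypothesis \eqref{eq15} applied to the subsequence $(u_{n_k})$ (which still satisfies the lower $\ell_2$-bound with the same constant $M$, since it is a restriction of \eqref{eq15} to sequences supported on $\{n_k\}$). For scalars $a=(a_k)$ that are, say, constant on their support of size $N$, one has $\norm{a}_{\ell_p}=N^{1/p-1/2}\norm{a}_{\ell_2}$, and
\begin{equation*}
M^{-1}\norm{a}_{\ell_2}\le\Big\|\sum_k a_k u_{n_k}\Big\|_{C_F}\le\Big\|\sum_k a_k(u_{n_k}-v_k)\Big\|_{C_F}+\Big\|\sum_k a_k v_k\Big\|_{C_F}\le K_0(\gamma+o(1))\norm{a}_{\ell_2}+D_F M' N^{1/p-1/2}\norm{a}_{\ell_2}.
\end{equation*}
Since $p<2$ we have $N^{1/p-1/2}\to0$ as $N\to\infty$ (here I use $p\le 2$; the case $p>2$ does not arise because $F\subset\ell_2$ would fail — recall that for the lower bound in \eqref{eq15} to be compatible with an upper $p$-estimate one needs $p\le 2$), so letting $N\to\infty$ along the tail of the subsequence forces $M^{-1}\le K_0\gamma$, i.e. $\gamma\ge(MK_0)^{-1}$, contradicting the standing assumption $\gamma<(MK_0)^{-1}$. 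This contradiction proves the existence of the required $m$. The main obstacle I anticipate is the double disjointification step: turning the left-disjoint blocks $v_k$ into genuinely left-\emph{and}-right disjoint operators (so that Lemma~\ref{noncomest} applies with a controlled constant) while keeping the perturbation errors summable; this has to be done carefully by interleaving the finite-rank truncations $R_{N_k}$ with the column-support thinning, and is the technical heart that replaces the concrete Haar-basis computations of \cite{AL}.
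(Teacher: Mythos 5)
Your overall strategy is the same as the paper's: extract a subsequence $u_{n_i}$ whose ``diagonal blocks'' carry almost all the mass except for a part of norm $\lesssim\gamma$, bound the disjointly supported blocks via the upper $p$-estimate (Lemma~\ref{noncomest}) and the remainder via Rademacher $2$-type, and compare growth rates against the lower $\ell_2$-bound \eqref{eq15}. The paper does exactly this, testing against $a_i\equiv 1$ and obtaining $(M^{-1}-K_0\gamma)k^{1/2}\le D_FMk^{1/p}+K_0$.

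However, your final step contains a genuine error: you assert that $N^{1/p-1/2}\to 0$ because $p<2$, and you add that ``the case $p>2$ does not arise.'' This is exactly backwards. For $p<2$ one has $1/p-1/2>0$, so $N^{1/p-1/2}\to\infty$ and your chain of inequalities yields no contradiction at all. The proposition is used throughout the paper precisely in the regime of an upper $p$-estimate with $p>2$ (see Propositions~\ref{lem:4} and \ref{prp main} and the last line of the paper's proof, which derives the contradiction from $k^{1/p}=o(k^{1/2})$, i.e.\ from $p>2$); and a lower $\ell_2$-bound such as \eqref{eq15} is perfectly compatible with an upper $p$-estimate for $p>2$ (e.g.\ a single row of a matrix in $C_p$ already satisfies it), so your heuristic justification for excluding $p>2$ is also incorrect. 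Once you replace ``$p<2$'' by ``$p>2$'' the computation closes as intended. A secondary, repairable point: truncating $v_k$ by $R_{N_k}$ alone does not make the blocks right-disjoint (the column supports $(m_k,N_k]$ need not be pairwise disjoint unless you force $N_k\le m_{k+1}$); the paper avoids this bookkeeping by using the single interlaced sequence $\{m_i\}$ and the blocks $R_{m_i}(\mathbf{1}-P_{m_{i-1}})u_{n_i}$, which live in disjoint diagonal squares $(m_{i-1},m_i]^2$. Your ``interleaving'' remark points in the right direction, but as stated the thinning does not yet produce disjoint column supports.
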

\begin{proof}
Assume by contradiction that     for any $m\in \mathbb{N}$,  there exists $n\in \mathbb{N}$ such that
\begin{align}\label{eq17}
\left\|P_m u_n\right\|_{C_F }< \gamma .
 \end{align}
Without loss of generality,
we may assume that for every $m$ the last inequality holds for infinitely many $n$.
Indeed, suppose that  for a given $m$, there exist only finitely many $n$ (denoted by $m(1)$, $m(2)$, $\cdots$, $m(k)$) such that
$\left\|P_m\cdot u_n\right \|_{{C_F}  }< \gamma$.
Since $P_m\uparrow {\bf 1}$,  where ${\bf 1}$ is the identity,
 and $\left\|u_n\right\|_{C_F}  \ge M^{-1} \ge (MK_0)^{-1} >  \gamma$ for all $n\in\mathbb{N}$ (see \eqref{eq15}), it follows from the separability and \cite[Theorem 3.1]{DPS2016} (see also \cite{CS,HLS}) that we can find $m'$ such that
$$
\left\|P_{m'} u_n\right\|_{C_F }>\gamma \;\;\mbox{if}~n=m(1),m(2) ,  \cdots , m(k). $$
Since
$$
\left\|P_{m'} u_n\right\|_{C_F}\ge \left\|
P_m\cdot u_n
\right\|_{C_F }>\gamma,~n\ne m(1) , ~m(2) ,  \cdots , m(k), $$
it follows that   \eqref{eq16} holds with $m'$ instead of $m$. Thus, next we may assume that for every given $m$ inequality \eqref{eq17} holds for infinitely many values of $n\in\mathbb{N}$. Consequently, there are two increasing sequences of positive integers $\{m_i\}_{i=0}^\infty$ ($m_0=0$) and $\{n_i\}_{i=1}^\infty$ such that
\begin{equation}\label{eq18}
\left\|({\bf 1} -R_m)u_{n_i}\right\|_{C_F }< 2^{-i}\;\;\mbox{and}\;\;\left\|P_{m_{i-1}}u_{n_i}\right\|_{C_F }< \gamma,\;\;i=1,2,\dots
 \end{equation}

Next, by  \eqref{eq15} and the triangle inequality for the $L_p$-norm, for each $k\in\mathbb{N}$,  we have
\begin{eqnarray}
M^{-1}k^{1/2}
&\stackrel{\eqref{eq15}} {\le}&
\Big(\int_0^1\Big\|\sum_{i=1}^k r_i(t)u_{n_i}\Big\|_{C_F}^p\,dt\Big)^{1/p}\nonumber\\
&\le&\Big(\int_0^1\Big\|\sum_{i=1}^k r_i(t)R_{m_i}({\bf 1}-P_{m_{i-1}})u_{n_i}\Big\|_{C_F}^p\,dt\Big)^{1/p}\nonumber \\
& + &
\Big(\int_0^1\Big\|\sum_{i=1}^k r_i(t)\Big[u_{n_i}-R_{m_i}({\bf 1}-P_{m_{i-1}})u_{n_i}\Big]\Big\|_{C_F}^p\,dt\Big)^{1/p}.
\label{eq18new}
\end{eqnarray}
Then, since the elements
$$
r_i(t)R_{m_i}({\bf 1}-P_{m_{i-1}})u_{n_i},\;\;i=1,2,\dots,k,$$
are pairwise left and right disjointly supported in $C_F$,
it follows from  Lemma \ref{noncomest}  that the first term from the right-hand side of inequality \eqref{eq18new} does not exceed the quantity
$$
D_F\Big(\sum_{i=1}^k \|R_{m_i}({\bf 1}-P_{m_{i-1}})u_{n_i}\|_{C_F}^p\Big)^{1/p}\le D_FMk^{1/p},$$
where $D_F$ is the $p$-upper estimate constant of $F$.
Moreover, applying \eqref{eq14a}, we see that the second term in \eqref{eq18new} is not bigger than
$$
K_0\Big\{\sum_{i=1}^k \Big(\left\|({\bf 1}-R_{m_i})u_{n_i}\right\|_{C_F}
+\left\|R_{m_{i}}P_{m_{i-1}}u_{n_i}\right \|_{C_F}\Big)^2\Big\}^{1/2}\le K_0(1+\gamma k^{1/2}).$$
Summing up the last estimates, we conclude that for all $k\in\mathbb{N}$
$$
M^{-1}k^{1/2}\le D_FMk^{1/p}+K_0(1+\gamma k^{1/2}),$$
whence
$$
(M^{-1 }-K_0\gamma)k^{1/2}\le D_FMk^{1/p}+K_0.$$
Since $\gamma < (MK_0)^{-1}$, this contradicts the fact that $p>2$.
\end{proof}

Let $\cF$ be a symmetric ideal of $B(\cH)$.
For $1\le p<\infty$, we  define (see e.g. \cite{Xu,DDS2014}) the $p$-convexification $\cF^{(p)}$ by setting
$$  \cF^{(p)} =\{  x \in B(\cH) : |x|^p \in  \cF \} , \quad \left\| x\right\|_{  \cF^{(p)}} = \left\| |x|^p \right\|^{1/p}_ \cF,$$
and  the $p$-concavification  $\cF_{(p)}$ by setting
$$  \cF_{(p)} =\{  x \in B(\cH) : |x|^{1/p} \in  \cF \} , \quad \left\| x \right\|_{  \cF_{(p)}} = \left\| |x|^{1/p} \right\|^{p}_ \cF. $$
The  Lorentz--Shimogaki Theorem   \cite[Theorem 2 and Lemma 3]{LS} demonstrates that the notions of $p$-convexifications and $p$-concavifications
are closely related to the interpolation theory of operators.
We compare below  the norms of $\bigoplus_j x_j$ and  $\sum _j x_j$ in the ideal $C_F$ generated by an interpolation space $F$ between $\ell_2$ and $\ell_\infty$ by applying the  Lorentz--Shimogaki Theorem.

\begin{lem}\label{rdisjoint estimate} Let $F\in{\rm Int}(\ell_2,\ell_{\infty}).$ 
There exists a constant $C>0$ such that for any  elements $x_j\in C_F $, $1\le j\le n$,  which are disjointly supported from the right (or from the left), we have
$$\left\|\sum_{j=1}^n x_j \right\|_{C_F}\geq C \left\|\bigoplus_{j=1}^n   x_j\right\|_{C_F}.$$
\end{lem}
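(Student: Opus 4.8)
The plan is to reduce the operator-ideal inequality to a scalar (sequence-space) inequality, using the Lorentz--Shimogaki theorem to control how the norm of $C_F$ behaves under the passage from $\sum_j x_j$ to $\bigoplus_j x_j$ when $F$ is interpolation between $\ell_2$ and $\ell_\infty$. First I would treat the case where the $x_j$ are disjointly supported from the right; the left case follows by applying the transposition operator $S$, which is an isometry on $C_F$ and interchanges left and right supports. Writing $x=\sum_{j=1}^n x_j$, disjointness from the right gives $x^*x=\sum_{j=1}^n x_j^*x_j$ with the summands having pairwise orthogonal ranges, so $\mu(x)$ is the decreasing rearrangement of the concatenation of the singular value sequences $\mu(x_j)$ \emph{up to the interleaving caused by the left supports not being disjoint}. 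The key point is that even without left-disjointness one always has the submajorization $\bigl(\bigoplus_{j=1}^n \mu(x_j)\bigr)^* \prec\prec$ something comparable, but more usefully one has a clean inequality in the other direction after a $2$-concavification.

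The cleaner route, which I expect the authors to take, is via the $2$-concavification. Since $F\in{\rm Int}(\ell_2,\ell_\infty)$, the $2$-concavified space $F_{(1/2)}$ (equivalently, the space whose elements $y$ satisfy $y^{1/2}\in$ something) lies in ${\rm Int}(\ell_1,\ell_\infty)$, and by the Lorentz--Shimogaki theorem \cite[Theorem 2 and Lemma 3]{LS} this is precisely the class of symmetric sequence spaces with norm monotone under Hardy--Littlewood--P\'olya submajorization combined with an upper $\ell_1$-type estimate. Concretely: for $x_j$ disjointly supported from the right, $x^*x=\sum_j x_j^*x_j$ and the operators $x_j^*x_j$ are pairwise disjointly supported from \emph{both} sides (their ranges are orthogonal and so are their co-ranges, since $r(x_j^*x_j)=r(x_j^*)$ and these are the left supports $l(x_j)$... ). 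Wait — that is exactly where left-disjointness would be needed. The correct statement is that $x^*x=\sum_j x_j^*x_j$ is a sum with pairwise orthogonal ranges, hence $\mu(x^*x) = \bigl(\bigoplus_j \mu(x_j^*x_j)\bigr)^*$ holds as an \emph{identity} regardless of the left supports, because a sum of positive operators with pairwise orthogonal ranges has eigenvalue multiset equal to the union of the individual eigenvalue multisets. Therefore $\mu(x)^2 = \mu(x^*x)$ rearranges the concatenation $\bigoplus_j \mu(x_j)^2$, i.e.
\[
\mu(x) = \Bigl(\bigoplus_{j=1}^n \mu(x_j)^2\Bigr)^{*1/2} = \Bigl(\bigoplus_{j=1}^n \mu(x_j)^{\oplus 1, \text{with squares}}\Bigr)\ \text{rearranged},
\]
so that $\norm{x}_{C_F} = \norm{\mu(x)}_F = \bigl\|\,\bigl(\bigoplus_j \mu(x_j)^2\bigr)^*\,\bigr\|_F^{1/2}\cdot(\ )$ — more precisely $\norm{x}_{C_F} = \bigl\| \bigoplus_j \mu(x_j)^2 \bigr\|_{F^{(2)}}^{1/2}$ in the notation of the $2$-concavified space. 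Meanwhile $\norm{\bigoplus_j x_j}_{C_F} = \bigl\| \bigoplus_j \mu(x_j) \bigr\|_F$ by Remark \ref{ncdc}. So the inequality to prove becomes the purely scalar statement: there is $C>0$ with $\bigl\| \bigl(\bigoplus_j b_j\bigr)^{*}\bigr\|_{F^{(2)}}^{1/2}\geq C\,\bigl\|\bigoplus_j b_j^{1/2}\bigr\|_F$ for nonnegative finitely supported sequences $b_j$, which is equivalent to: $\sum_j b_j$ (disjoint sum) has $F^{(2)}$-norm bounded below by a constant times the square of the $F$-norm of $\bigoplus_j b_j^{1/2}$. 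Setting $b_j=\mu(x_j)^2$ this is a lower $\ell_2$-estimate/convexity-type comparison that holds because $F^{(2)}\in{\rm Int}(\ell_1,\ell_\infty)$ is $1$-convex while raising to the power $1/2$ inside the disjoint sum and applying the $2$-convexity direction... the essential ingredient is that $F\supseteq$ something containing $\ell_2$ so that concatenating-then-rearranging a disjoint sum of $b_j^{1/2}$'s cannot be much larger than the $\ell_2$-aggregate, which is controlled by $F\in{\rm Int}(\ell_2,\ell_\infty)$ via the boundedness of $\sigma_{1/n}$-type operators.

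The step I expect to be the main obstacle is the scalar inequality: proving that for $F\in{\rm Int}(\ell_2,\ell_\infty)$ and nonnegative sequences $c_j$ (playing the role of $\mu(x_j)$), $\bigl\|\,(\bigoplus_j c_j^2)^{*}\,\bigr\|_{F^{(2)}}\geq C^2\,\bigl\|\bigoplus_j c_j\bigr\|_F^2$. Using the Lorentz--Shimogaki theorem, $F\in{\rm Int}(\ell_2,\ell_\infty)$ is equivalent to the Calder\'on--Mityagin-type condition that $F^{(2)}\in{\rm Int}(\ell_1,\ell_\infty)$, i.e. $F^{(2)}$ is an exact interpolation space with norm monotone under submajorization; this lets one replace $(\bigoplus_j c_j^2)^*$ by anything submajorized by it and comparably, and in particular one reduces to the block structure where each $c_j$ is constant, after which the inequality is a finite-dimensional comparison of $\|\cdot\|_F$ on $\ell_2^{(n)}$-like blocks that follows from quasi-concavity of the fundamental function and the dilation-operator bound $\|\sigma_\tau\|_{F\to F}\leq\max(1,\tau)$. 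I would carry this out by: (1) reducing to left-disjoint via $S$ is \emph{not} needed once we use $x^*x$, so instead (1') recording the eigenvalue identity $\mu(\sum_j x_j)^2=(\bigoplus_j\mu(x_j)^2)^*$ for right-disjoint $x_j$; (2) translating both sides into norms in $F$ and $F^{(2)}$ of disjoint sums of scalar sequences; (3) invoking Lorentz--Shimogaki to get the interpolation description of $F^{(2)}$; (4) proving the scalar lower bound by discretizing each $\mu(x_j)$ into dyadic level sets and applying the dilation estimate. For the left-disjoint case, one applies the identity $\mu(x x^*)$ in place of $\mu(x^*x)$, i.e. uses $r(x_j^*)=l(x_j)$ pairwise orthogonal, and the same argument goes through verbatim.
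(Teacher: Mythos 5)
Your reduction rests on the claimed identity $\mu\bigl(\sum_j x_j\bigr)^2=\bigl(\bigoplus_j\mu(x_j)^2\bigr)^*$ for right-disjoint $x_j$, and this identity is false; the hesitation you voice mid-argument (``that is exactly where left-disjointness would be needed'') points at a real obstruction, not a removable one. Two separate things go wrong. First, for right-disjoint $x_j$ the cross terms of $x^*x=\sum_{j,k}x_j^*x_k$ do \emph{not} vanish: writing $p_j$ for the left support projection of $x_j$, one has $x_j^*x_k=(x_j^*p_j)(p_kx_k)$, so killing the cross terms requires disjoint \emph{left} supports. (The identity that right-disjointness actually yields is $xx^*=\sum_j x_jx_j^*$, since $x_jx_k^*=u_j|x_j|\,|x_k|u_k^*=0$.) Second, even in that correct identity the summands $x_jx_j^*=u_j|x_j|^2u_j^*$ are supported on the left supports, which are not assumed orthogonal, so the eigenvalue multiset of the sum is not the union of the individual multisets. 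A two-by-two example kills the claimed identity outright: $x_1=e_{11}$ and $x_2=e_{12}$ are right-disjoint, yet $\mu(x_1+x_2)=(\sqrt2,0,\dots)$ while $\bigl(\bigoplus_j\mu(x_j)\bigr)^*=(1,1,0,\dots)$. What survives is only the submajorization $\bigoplus_j\mu(x_j)^2\prec\prec\mu\bigl(\sum_jx_j\bigr)^2$ (note $(1,1)\prec\prec(2,0)$ at the level of squares), and this is exactly how the paper argues: it writes $\bigl|\bigl(\sum_jx_j\bigr)^*\bigr|^2=\sum_ju_j|x_j|^2u_j^*$, invokes \cite[Lemma 3.3.7]{LSZ} to get $\bigoplus_ju_j|x_j|^2u_j^*\prec\prec\sum_ju_j|x_j|^2u_j^*$, and then uses the Lorentz--Shimogaki theorem to conclude that the quasi-norm of the $2$-concavification $(C_F)_{(2)}$ is monotone under submajorization up to a constant, because $F\in{\rm Int}(\ell_2,\ell_\infty)$ places $F_{(2)}$ in ${\rm Int}(\ell_1,\ell_\infty)$.

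Had your identity been true, the lemma would hold with $C=1$ for an arbitrary symmetric sequence space $F$, with no interpolation hypothesis at all --- which should have been a warning sign. The second half of your plan (reduction to a ``scalar inequality'', dyadic discretization, dilation-operator bounds) is aimed at a statement you never pin down, and none of it is needed once the submajorization route is taken: monotonicity of $\norm{\cdot}_{(C_F)_{(2)}}$ under $\prec\prec$ finishes the proof in one line. Your observation that the left-disjoint case reduces to the right-disjoint one via the transposition isometry is correct and matches the paper's implicit treatment.
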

\begin{proof} Let $x_j=u_j|x_j|$, $1\le j\le n$, be the  polar decompositions.
Due to the assumption that $x_j$ have disjoint right supports, we have $\left|x_j\right|\left|x_k\right|=0$ for $j\neq k.$ Thus,
$$\left|
\left(
\sum_{j=1}^n  x_j
\right)^*
\right|^2=\sum_{1\le j,k\le n }  x_jx_k^*
=
\sum_{1\le j,k\le n }  u_j|x_j||x_k|u_k^*
=
\sum_{j=1}^n u_j|x_j|^2u_j^*.$$
By   \cite[Lemma 3.3.7]{LSZ}, we have that
$$\bigoplus_ {j=1}^n  u_j|x_j|^2u_j^*\prec\prec\sum_{j=1}^n  u_j|x_j|^2u_j^*.$$
By the Lorentz--Shimogaki Theorem  (see e.g.  \cite[Theorem 3.1]{Cadilhac}, \cite[Theorem 2 and Lemma 3]{LS} and  \cite{Sparr}),
the (quasi-)norm in $(C_F)_{(2)}$ is monotone with respect to  submajorisation (up to a constant).   
It follows that
\begin{align*}
\left\|\sum_{j=1}^n x_j \right\|_{C_F}
&=\left\|\left|
\left(
\sum _{j=1}^n    x_j
\right)^*
\right|^2 \right\|_{(C_F)_{(2)}}^{\frac12}
= \left\|\sum_{j=1}^n u_j|x_j|^2u_j^*\right\|_{(C_F)_{(2)}}^{\frac12} \\
&\geq C \left\|\bigoplus _{j=1}^n u_j|x_j|^2u_j^*\right\|_{(C_F)_{(2)}}^{\frac12} = C \left\|\bigoplus_{j=1}^n  |x_j|^2 \right\|_{(C_F)_{(2)}} ^{\frac12}=
C \left\|\bigoplus_{j=1}^n    x_j \right\|_{C_F}.
\end{align*}
\end{proof}


\section{Symmetric spaces located between $L_1$ and $L_2$}
\label{<2}

We start with considering the simpler situation. It is well known that the $L_p$-spaces, for $1\le p<2$, have a much richer geometric structure than in the case when $2<p<\infty$. The same observation is true also for symmetric spaces located between $L_1(0,1)$ and $L_2(0,1)$, comparing with spaces lying between $L_2(0,1)$ and $L_\infty(0,1)$. This observation combined with the above-mentioned Arazy's result (see \cite[Corollary 3.2]{Arazy81}) allows rather simply to show that $E\not\hookrightarrow C_F$ for wide classes of symmetric function spaces $E$ and symmetric sequence spaces $F$.

\begin{proposition}
\label{prop easy}
Let $E:=E(0,1)$ be a symmetric function space and $F$ be a separable symmetric sequence space.
Then,

(i) if  $E$ contains a symmetric sequence space $G$ such that  $G\not\hookrightarrow \ell_2\oplus F$, then $E\not\hookrightarrow C_F$.
In particular, if $E$ contains a subspace isomorphic to $\ell_r$ for some $1\le r<\infty$, $r\ne 2$ and $\ell_r \not\hookrightarrow  F$, then
 $E\not\hookrightarrow C_F$;

(ii) If $t^{-1/r}\in E$ for some $r\in (1,2)$, then $E\not\hookrightarrow C_{p,q}:=C_{\ell_{p,q}}$ for all $1<p<\infty$, $1\le q<\infty$, and $E\not\hookrightarrow C_{\lambda_w ^{q}} $ for any $1\le q<\infty$ and decreasing sequence of positive numbers $w=(w_n)_{n=0}^\infty$ such that $\lim_{n\to\infty}w_n=0$ and $\sum_{n=0}^\infty w_n=\infty$. In particular, $L_{p_1,q_1}\not\hookrightarrow C_{p_2,q_2}$ for all $1<p_1<2$, $1<p_2<\infty$, $1\le q_1,q_2<\infty$, and $\Lambda^{q_1}_\psi(0,1)\not\hookrightarrow C_{\lambda_w ^{q_2}}$ for an arbitrary Lorentz space $\lambda_w ^{q_2}$ if $\int_0^1 t^{-q_1/r}d\psi(t) <\infty$ with some $1<r <2$;

(iii) If $p_1,p_2\in (1,\infty)$ and $q_1,q_2\in [1,\infty )$ such that  $q_1\ne q_2$, $q_1\ne 2$, then
$L_{p_1,q_1}(0,1)\not\hookrightarrow C_{p_2,q_2}$ and  $\Lambda_{\psi}^{q_1}(0,1)\not\hookrightarrow C_{\lambda_w^{q_2}}$ for every increasing concave function $\psi$, $\psi(0)=0$, and any decreasing sequence of positive numbers $w=(w_n)_{n=0}^\infty$ satisfying the same properties as in (ii).
\end{proposition}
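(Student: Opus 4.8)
My plan is to derive all three parts from the single principle in part (i), whose proof in turn rests on two earlier results: Arazy's theorem (quoted in the introduction) that $\ell_r \hookrightarrow C_F$ iff $\ell_r \hookrightarrow F$ for $r \neq 2$, and the structural fact about subspaces of $C_F$ analogous to \cite[Proposition 4]{AL}. For part (i), suppose toward a contradiction that $E \hookrightarrow C_F$. Since $G \hookrightarrow E$ by hypothesis, we would get $G \hookrightarrow C_F$. The key step is to show that any symmetric sequence space embedding into $C_F$ must in fact embed into $\ell_2 \oplus F$: I would invoke the general principle (the $C_F$-analogue of the Kadec--Pełczyński-type dichotomy, available since $F$ is separable symmetric) that a subspace of $C_F$ spanned by a symmetric basic sequence either embeds into $\ell_2$ or is equivalent to a disjointly supported sequence in $F$ (equivalently, $\ell_2$ or a copy of a subspace of $F$ sits inside). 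Combining these alternatives across the canonical basis of $G$ and passing to a subsequence/block basis, one gets $G \hookrightarrow \ell_2 \oplus F$, contradicting the hypothesis. The ``in particular'' clause is the special case $G = \ell_r$: if $\ell_r \not\hookrightarrow F$ and $r \neq 2$, then $\ell_r \not\hookrightarrow \ell_2 \oplus F$ (a complemented copy argument, or simply because $\ell_r$ is not isomorphic to a subspace of $\ell_2$ for $r \neq 2$ and any subspace of $\ell_2 \oplus F$ containing $\ell_r$ would, after a perturbation, have a further subspace inside $F$ spanning $\ell_r$), so part (i) applies.

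For part (ii), the hypothesis $t^{-1/r} \in E$ for some $r \in (1,2)$ means $L_{r,\infty}(0,1) \subseteq E$, and in particular $L_r(0,1) \subseteq E$ (since $L_{r,\infty} \supset L_r$ fails — rather, one uses that $t^{-1/r_1} \in E$ for the given $r$ forces $L_{r_1}(0,1) \hookrightarrow E$ for any $r_1$ slightly larger than $r$, still $<2$; more simply, $E$ contains $L_{r}(0,1)$ because $E \supseteq L_{r,\infty}$ and hence $E$ contains $\ell_{r_1}$ for each $r < r_1 < 2$ via the known fact $\ell_{r_1} \hookrightarrow L_{r_1}(0,1) \hookrightarrow L_{r,\infty}(0,1)$, or directly $\ell_r \hookrightarrow E$ through a block basis of $\chi$'s on dyadic-type intervals). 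Thus $E$ contains $\ell_{r_1}$ for some $r_1 \in (1,2)$. Now one checks $\ell_{r_1} \not\hookrightarrow \ell_{p,q}$: since $\ell_{p,q}$ has an unconditional basis and every symmetric basic sequence in it is equivalent either to the unit vector basis of $\ell_{p,q}$ itself or (by disjointification) to the canonical basis, and $\ell_{r_1}$ with $r_1 < 2 \leq$ (relevant exponents) cannot be a subspace — here I would cite the finite-representability/type-cotype obstruction: $\ell_{r_1}$ for $r_1 < 2$ fails cotype $2$, whereas $\ell_{p,q}$ for the parameters in question... — actually the cleanest route is the recent result \cite{KS,SS2018} that $\ell_{p,q} \not\hookrightarrow L_{p,q}(0,1)$ combined with the fact that any $\ell_{r_1}$ inside $\ell_{p,q}$ would already be absent by a disjointness argument in sequence spaces, which is elementary. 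The Lorentz sequence space case $\lambda_w^q$ is handled identically: $\ell_{r_1} \not\hookrightarrow \lambda_w^q$ because a symmetric basic sequence there is equivalent to the unit vector basis or a disjoint one, and $\ell_{r_1}$ is not among these for $r_1 \ne 2$. Then part (i) gives $E \not\hookrightarrow C_{p,q}$ and $E \not\hookrightarrow C_{\lambda_w^q}$. The applications to $L_{p_1,q_1}$ and $\Lambda_\psi^{q_1}$ follow since $t^{-1/r} \in L_{p_1,q_1}$ when $r < p_1 < 2$ (check: $\int_0^1 (t^{-1/r})^{q_1} d(t^{q_1/p_1}) < \infty$ iff $r > p_1$ — wait, need $r$ chosen between some value and $2$; with $p_1 < 2$ fixed, take any $r \in (p_1, 2)$, then $t^{-1/r} \in L_{p_1,q_1}$), and the condition $\int_0^1 t^{-q_1/r} d\psi(t) < \infty$ is precisely $t^{-1/r} \in \Lambda_\psi^{q_1}$.

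For part (iii), since $q_1 \ne 2$, the space $L_{p_1,q_1}(0,1)$ contains a subspace isomorphic to $\ell_{q_1}$ — this is a standard fact (span of a suitably normalized disjoint sequence, or of Rademacher-type independent functions, realizes $\ell_{q_1}$ inside any $L_{p_1,q_1}$), and likewise $\Lambda_\psi^{q_1}(0,1)$ contains $\ell_{q_1}$ (disjoint functions with appropriate fundamental-function weights span $\ell_{q_1}$ by the very definition of the Lorentz norm). Then I would verify $\ell_{q_1} \not\hookrightarrow \ell_{p_2,q_2}$ and $\ell_{q_1} \not\hookrightarrow \lambda_w^{q_2}$ whenever $q_1 \ne q_2$ and $q_1 \ne 2$: in both target spaces every infinite-dimensional subspace contains, after perturbation, a disjointly supported symmetric basic sequence, which must be equivalent to the $\ell_{q_2}$-basis (for $\ell_{p_2,q_2}$) or to the canonical $\lambda_w^{q_2}$-basis or $\ell_{q_2}$ (for $\lambda_w^{q_2}$); since $\ell_{q_1}$ is totally incomparable with $\ell_{q_2}$ for $q_1 \ne q_2$ and with $\ell_2$ for $q_1 \ne 2$, no such embedding exists. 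Hence $\ell_{q_1} \not\hookrightarrow \ell_2 \oplus F$ for $F = \ell_{p_2,q_2}$ or $\lambda_w^{q_2}$, and part (i) closes the argument. I expect the \textbf{main obstacle} to be the careful justification, in part (i), of the step that a symmetric sequence space embedding into $C_F$ must embed into $\ell_2 \oplus F$ — one must correctly invoke and apply the appropriate $C_F$-analogue of the $C_p$ subspace dichotomy (tracking that separability of $F$ is used) and manage the block-basis/perturbation bookkeeping uniformly over the basis of $G$; the individual non-embedding verifications in (ii) and (iii) are then routine consequences of the symmetric basic sequence structure of $\ell_{p,q}$ and $\lambda_w^q$.
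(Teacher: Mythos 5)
Your overall route coincides with the paper's: part (i) is exactly Arazy's Corollary 3.2 (cited as \cite{Arazy81} in the paper), which says that a symmetric basic sequence in $C_F$ spans a subspace embeddable into $\ell_2\oplus F$, and parts (ii), (iii) reduce to (i) via $\ell_r\hookrightarrow E$ (the paper gets this from \cite[Theorem~2.f.4]{LT2}) together with the non-embedding of $\ell_r$ into $\ell_{p,q}$ and $\lambda_w^{q}$, which the paper proves by weak nullity, the Bessaga--Pe{\l}czy\'{n}ski selection principle, disjointification, the fact that the span of a disjoint sequence in $\ell_{p,q}$ contains a copy of $\ell_q$ (\cite[Theorem~5]{Dilworth}), and total incomparability of $\ell_q$ and $\ell_r$.

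Two concrete points in your part (ii) need repair. First, your claim that $\ell_{r_1}$ with $r_1<2$ ``cannot be a subspace'' of $\ell_{p,q}$ is false when $r_1=q\in(1,2)$: $\ell_q$ does embed into $\ell_{p,q}$ as the span of a disjoint sequence. Since $t^{-1/r}\in E$ forces $t^{-1/r'}\in E$ for every $r'\in(r,2)$, you must, as the paper does at the outset of its proof, fix the exponent with $r_1\ne q$ before running the incomparability argument. Second, the results of \cite{KS} and \cite{SS2018} concern the non-embedding of $\ell_{p,q}$ into $L_{p,q}(0,1)$ and are irrelevant to showing $\ell_{r_1}\not\hookrightarrow\ell_{p,q}$; the correct tool is the disjointification argument you also sketch, so this citation should simply be dropped. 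With these adjustments your proof matches the paper's.
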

\begin{proof}
Assertion (i) is an immediate consequence of \cite[Corollary~3.2]{Arazy81}.

(ii). Fix $r\in (1,2)$, $r\ne q$, such that $t^{-1/r}\in E$. Then, by \cite[Theorem~2.f.4]{LT2}, $\ell_r\hookrightarrow E$. According to (i), it remains to show that $\ell_r \not\hookrightarrow  \ell_{p,q}$ and $\ell_r \not\hookrightarrow  \lambda_w^q$. Since the proof of these relations is quite similar, we check only the first of them.

Assuming the contrary, we have a sequence $\{x_k\}\subset \ell_{p,q}$, which  is equivalent in $\ell_{p,q}$ to the unit vector basis in $\ell_r$ and hence is weakly null in $\ell_{p,q}$. Therefore, applying the Bessaga--Pelczy\'{n}ski selection principle (see e.g. \cite[Proposition 1.a.12]{LT1} or \cite[Proposition~1.3.10]{AK}), we can assume that $\{x_k\}$ consists of pairwise disjoint elements. Then, by \cite[Theorem~5]{Dilworth}, the span $[x_k]$ contains a further subspace isomorphic to $\ell_q$. Since the spaces $\ell_q$ and $\ell_r$, $q\ne r$, are totally incomparable (see e.g. \cite[Corollary~2.1.6]{AK}),  this is a contradiction. Hence, the first assertion of (ii) follows.

The second assertion is a direct consequence of the first one, because $t^{-1/r}\in L_{p,q}(0,1)$ if  $p<r<2$ (resp. $t^{-1/r}\in \Lambda^q_\psi(0,1)$ if $\int_0^1 t^{-q/r }d\psi(t) <\infty$).

(iii). The desired result follows from the fact that $\ell_{q_2} \hookrightarrow  L_{p_2,q_2}(0,1)$ \cite[Theorem 11]{Dilworth} and the part (i) in the same way as (ii).
\end{proof}

\section{{Symmetric spaces located between $L_2$ and $L_\infty$}}
\label{first}

In this section we prove the main results of the paper.
Key roles will be played by the following propositions.
Extending Theorem 6 in \cite{AL}, we provide conditions, under which a symmetric function space $E(0,1)$ fails to be isomorphically embedded into the ideal $C_F$ generated by a symmetric sequence space $F$.
Examples of the spaces $E$ and $F$ which satisfy all the above conditions will be given in Section \ref{app}.

\begin{proposition}\label{lem:4}
Let a separable symmetric sequence space $F$ is  $q$-concave and has  an upper $p$-estimate for some $2<p\le q<\infty$. Assume also that
a separable symmetric function space $E$ on $[0,1]$ is such that $\delta_{\phi_E}<1/2$ and
\begin{align}\label{EF}
\phi_E(1/n)\ge A  n^{- 1/ q} ,\;\;n\in\mathbb{N}.
\end{align}
Then, we have
$$E\not\hookrightarrow  C_F.$$
\end{proposition}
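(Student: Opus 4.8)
The plan is to argue by contradiction, assuming that $E$ embeds isomorphically into $C_F$, and then combine the structural reductions from Section \ref{pr} with the rigidity provided by Lemma \ref{finite dim} and Proposition \ref{lemma main}. First I would observe that since $F$ is $q$-concave and admits an upper $p$-estimate with $2<p\le q<\infty$, it lies in ${\rm Int}(\ell_p,\ell_q)$ (hence also in ${\rm Int}(\ell_2,\ell_\infty)$) and, being $q$-concave with $q\ge 2$, it has Rademacher $2$-type; moreover $C_F$ inherits the lower $q$-estimate and the upper $p$-estimate on disjointly (left and right) supported elements. By Lemma \ref{triangular lemma}, $E\hookrightarrow C_F$ implies $E\hookrightarrow U_F$, and by Lemma \ref{right disjoint} we may assume the embedding $J:E\to U_F$ sends the Haar basis to a system $\{z_m\}$ that is disjointly supported from the right. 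Writing $J$ on the dyadic blocks $E_k$, the vectors $u_{k,j}:=J(x_{k,j})$ are normalized up to the isomorphism constant and pairwise disjointly supported from the right within each level $k$.

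Next I would exploit the Rademacher subspaces. On each dyadic interval $\Delta_k^j$ the functions $r_{n,k,j}$, $n\ge k$, span, by Lemma \ref{Rad}, a copy of $\ell_2$ inside $E$ whose images under $J$ form, after normalization by $\phi_E(2^{-k})$, a sequence in $U_F$ of operators of norm $O(1)$ satisfying the lower $\ell_2$-bound \eqref{eq15} with a constant $M$ independent of $k$ and $j$ (here \eqref{EF} is what controls the normalization: $\phi_E(2^{-k})^{-1}$ does not blow up faster than $2^{k/q}$, matching the $q$-concavity of $C_F$). Applying Proposition \ref{lemma main} to this family yields a fixed integer $m$ such that $\|P_m J(r_{n,k,j})\|_{C_F}\ge \gamma\,\phi_E(2^{-k})$ for all admissible $n$. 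The key point is that $P_m$ has fixed finite-dimensional range-type structure — it depends only on the first $m$ rows and columns — so $P_m J$ restricted to $E_k$ is essentially a map into a space of bounded dimension, and after rescaling it becomes a norm-one operator into (a space isometric to) $\ell_2$. This is precisely the setting of Lemma \ref{finite dim}: condition \eqref{lem dim} holds because $\delta_{\phi_E}<1/2$ (Remark \ref{rem lem dim}), so for $k$ large only a proportion $<\epsilon$ of the indices $j$ can have $\|P_m J(x_{k,j})\|$ comparably large.

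The contradiction then comes from counting. On one hand, Proposition \ref{lemma main} forces $\|P_m J(r_{n,k,j})\|_{C_F}$ to be bounded below for \emph{every} $j\in\{0,\dots,2^k-1\}$; on the other, since $P_m J(r_{n,k,j})$ is built from the same handful of rows/columns as $P_m J(x_{k,j})$, a lower bound for the former propagates to a lower bound on $\|P_m J(x_{k,j})\|$ for all $j$, contradicting the conclusion of Lemma \ref{finite dim} that the set of ``large'' $j$ has density at most $\epsilon$. I expect the main obstacle to be the bookkeeping that links the Rademacher vectors back to the Haar (or characteristic-function) vectors $x_{k,j}$ under the compression $P_m$: one must verify that the disjoint-right-support structure survives compression by $P_m$ on the \emph{left}, that $P_m$ genuinely lands in a fixed finite-dimensional slot so Lemma \ref{finite dim} applies uniformly in $k$, and that the normalization by $\phi_E(2^{-k})$ is simultaneously compatible with hypothesis \eqref{EF} (for the lower $\ell_2$-bound feeding Proposition \ref{lemma main}) and with $\delta_{\phi_E}<1/2$ (for the dilation condition feeding Lemma \ref{finite dim}). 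Assembling these two competing normalizations into a single contradiction is the technical heart of the argument; the rest is a careful but routine combination of the lemmas in Section \ref{pr} with the block-basis selection along the Haar system.
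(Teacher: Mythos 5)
Your inventory of ingredients is essentially the right one (contradiction, reduction to the upper triangular part with right-disjointly supported Haar images via Lemmas \ref{triangular lemma} and \ref{right disjoint}, the $\ell_2$ lower bound of Lemma \ref{Rad}, Proposition \ref{lemma main}, and Lemma \ref{finite dim}), but the mechanism you propose for the final contradiction does not work. You claim that Proposition \ref{lemma main} forces $\|P_m T(\widehat{r_{n,k,j}})\|_{C_F}\ge\gamma$ for \emph{every} $j$ with a fixed $m$, and that this contradicts the density bound of Lemma \ref{finite dim}. But the $m$ produced by Proposition \ref{lemma main} depends on the family to which it is applied: applied to $\{T(r_n)\}_n$ it gives some $m_1$, and applied to the $2^{k_1}$ sequences $(T(\widehat{r_{n,k_1,j}}))_{n\ge k_1}$ it gives a (generally much larger) $m_2>m_1$. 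Lemma \ref{finite dim}, on the other hand, bounds the number of ``large'' indices only for the \emph{fixed} compression $P_{m_1}$ once $k_1$ is large. The two statements concern different compressions and are perfectly compatible, so there is no counting contradiction at this stage. (Your auxiliary claim that a lower bound for $\|P_mT(\widehat{r_{n,k,j}})\|$ ``propagates'' to one for $\|P_mT(x_{k,j})\|$ is also both unjustified and unnecessary: the paper applies Lemma \ref{finite dim} directly to $P_{m_1}TQ_{n,k}$, where $Q_{n,k}:x_{k,j}\mapsto\widehat{r_{n,k,j}}$ is an isometry of $E_k$ onto $[\widehat{r_{n,k,j}}]_j$, so the Rademacher vectors themselves play the role of the $x_{k,j}$.)

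What is genuinely missing is the step that turns the tension between the two compressions into a quantitative divergence. The correct use of the two bounds is: for at least $2^{k_1-1}$ indices $j$ one has $\|P_{m_1}T(\widehat{r_{n,k_1,j}})\|\le\gamma/2$ while $\|P_{m_2}T(\widehat{r_{n,k_1,j}})\|\ge\gamma$, hence the \emph{band} $(P_{m_2}-P_{m_1})T(\widehat{r_{n,k_1,j}})$ has norm $\ge\gamma/2$ for those $j$. Since these band pieces are disjointly supported from the right, the lower $q$-estimate of $C_F$ sums them up, and it is exactly here that hypothesis \eqref{EF} enters: $\phi_E(2^{-k_1})\,2^{(k_1-1)/q}\ge A2^{-1/q}$ converts the count $2^{k_1-1}$ into the uniform bound $\|(P_{m_2}-P_{m_1})T(r_n)\|_{C_F}\ge \tfrac{A}{4}c_F\gamma$ for all $n\ge k_1$. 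Iterating produces increasing sequences $\{k_i\}$, $\{m_i\}$ and $l-1$ left-disjoint bands $(P_{m_{i+1}}-P_{m_i})T(r_n)$, each of norm bounded below, and a final application of the lower $q$-estimate gives $\|T(r_n)\|_{C_F}\gtrsim (l-1)^{1/q}\to\infty$, contradicting the boundedness of $T$. Neither \eqref{EF} nor the $q$-concavity of $F$ plays an operative role anywhere in your sketch, which is a reliable sign that the decisive part of the argument has not yet been found.
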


A disadvantage of Proposition~\ref{lem:4}
consists in the fact  that a function space $E(0,1)$ must satisfy rather restrictive conditions. The next proposition provides an alternative criterion for $E(0,1)$ ensuring the lack of isomorphic  embeddings into the ideal $C_F$. We relax the restriction imposed on  $E(0,1)$
in Proposition~\ref{lem:4},
and ask for an additional condition of distributional concavity on the  sequence space $F$.
However, since all Orlicz spaces and weighted Lorentz spaces are distributionally concave \cite{LS}, Proposition \ref{prp main}
has a much wider applicability.


\begin{proposition}
\label{prp main}
Let a separable symmetric function space $E:=E(0,1)$ and a separable symmetric sequence space $F$ satisfy the conditions:

(a) $F$ admits an upper $p$-estimate for some $p>2$;

(b) $F$ is distributionally concave;

(c) $F$ 
 is $q$-concave for some $q<\infty$;

(d) there is $A>0$ such that
$$
\left\|\sigma_{1/n}\right\|_{F\to F}\le A \phi_E(1/n) ,\;\;n\in\mathbb{N};$$

(e)
$$
\lim_{t\to\infty}\frac{{\mathcal M}_{\phi_E}(t)}{t^{1/2}}=0.$$

Then, $E\not\hookrightarrow C_F$.
\end{proposition}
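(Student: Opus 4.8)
The plan is to argue by contradiction: assume there is an isomorphic embedding $E \hookrightarrow C_F$. By Lemma \ref{triangular lemma} (note that $F \in {\rm Int}(\ell_p, \ell_q)$ follows from conditions (a) and (c) via the theorem of Lindenstrauss--Tzafriri on $p$-convexity/$q$-concavity and interpolation, after passing to slightly worse exponents), $C_F$ embeds isomorphically into its upper triangular part $U_F$, so we get an embedding $E \hookrightarrow U_F$. By Lemma \ref{right disjoint} we may assume the images $z_m$ of the Haar basis $\{h_m\}$ are disjointly supported from the right. Now fix $k$ large (to be specified at the end by Lemma \ref{finite dim}, which applies since condition (e) is exactly \eqref{lem dim}), and restrict attention to the subspace $E_k = [x_{k,j} : j = 0, \dots, 2^k-1]$ together with the functions $r_{n,k,j}$, $n \ge k$, of Subsection \ref{Haar, Rademacher}. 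The idea, following Arazy--Lindenstrauss, is to play the lower $\ell_2$-type behaviour of Rademacher sums inside $E$ (Lemma \ref{Rad}, which gives $\|\sum_n a_n r_{n,k,j}\|_E \ge \frac{1}{32}\phi_E(2^{-k})\|(a_n)\|_{\ell_2}$) against the upper $p$-estimate structure of $C_F$ (via Proposition \ref{lemma main} and Lemma \ref{noncomest}).

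The core of the argument runs as follows. Let $T: E \to U_F$ be the embedding with $\|T\|\,\|T^{-1}\| \le M_0$. For each fixed $j$, the images $u_n^{(j)} := T(r_{n,k,j})$, $n \ge k$, are operators in $U_F$ satisfying a lower $\ell_2$-estimate with constant controlled by $M_0 \phi_E(2^{-k})^{-1}$ (from Lemma \ref{Rad}) and a uniform upper bound $\|u_n^{(j)}\|_{C_F} \le M_0$. After normalizing by $\phi_E(2^{-k})$ — and here condition (d), $\|\sigma_{1/n}\|_{F \to F} \le A\phi_E(1/n)$, is what ties the scale of $F$ to that of $E$ — we may apply Proposition \ref{lemma main} to conclude there is an $m = m(j)$ with $\|P_m u_n^{(j)}\|_{C_F} \ge \gamma$ for all $n$, i.e. the images keep a definite mass in the finite corner $P_m$. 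One then exploits the disjointness-from-the-right of the $z_m$'s (inherited by the $r_{n,k,j}$, which live in the span of the Haar functions supported on $\Delta_k^j$) together with the lower $q$-estimate/distributional concavity of $F$: summing over $j$, the operators $\sum_n a_n^{(j)} u_n^{(j)}$ for distinct $j$ are disjointly supported from the right, so by Lemma \ref{rdisjoint estimate} and inequality \eqref{ncdceq} the $C_F$-norm of a combination $\sum_j \sum_n a_n^{(j)} u_n^{(j)}$ is bounded below by $c_F$ times the minimum over $j$ of $\|(\text{the }j\text{th block})^{\oplus 2^k}\|_{C_F}$. Feeding in the $P_m$-mass lower bounds and using that $F$ is $q$-concave while $\phi_E(2^{-k}) \ge A' 2^{-k/q}$ (which, in the setting of Proposition \ref{prp main}, follows from (d) because $\|\sigma_{1/n}\|_{F\to F} \ge \|\sigma_{1/n}\|_{\ell_q \to \ell_q}\cdot(\text{const}) = \text{const}\cdot n^{-1/q}$ after the interpolation embedding $\ell_p \subset F \subset \ell_q$), we obtain a lower bound of the right polynomial order in $2^k$.

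The contradiction is then produced by comparing this lower bound with an upper bound obtained from the upper $p$-estimate. On the space $E_k$, a normalized sum $\|\sum_j x_{k,j}\|_E = \phi_E(2^{-k} \cdot 2^k) / \phi_E(2^{-k}) = 1/\phi_E(2^{-k})$, while its image has $C_F$-norm at most $M_0 /\phi_E(2^{-k})$; expanding this image over the disjointly-right-supported $z$-blocks and using Lemma \ref{noncomest} (the $p$-upper estimate for $C_F$ on left-and-right disjoint families), one gets an upper bound of order $(2^k)^{1/p}$ times the block norms. Since $p > 2$ strictly, for $k$ large the upper bound $(2^k)^{1/p}$ cannot match the lower bound $(2^k)^{1/2}$ coming from the Rademacher/$\ell_2$ mechanism; this is exactly where the hypothesis $p > 2$ is used, and it mirrors the final line of the proof of Proposition \ref{lemma main}. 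The main obstacle, and the step requiring the most care, is the bookkeeping that converts the per-column $P_m$-localizations (with $m$ possibly depending on $j$) into a single global disjoint decomposition on which both Lemma \ref{rdisjoint estimate} and Lemma \ref{noncomest} can be applied simultaneously; one must use Lemma \ref{finite dim} to discard the $O(\epsilon 2^k)$ "bad" columns $j$ where the localization constant $m(j)$ is too large or the embedded vectors behave poorly, so that the surviving columns form a genuinely disjoint, uniformly controlled family. Condition (e) (equivalently $\delta_{\phi_E} < 1/2$, cf. Remark \ref{rem lem dim}) is precisely what makes this pruning possible, and condition (b) (distributional concavity) is what lets the minimum-over-blocks bound in \eqref{ncdceq} survive the pruning without collapsing.
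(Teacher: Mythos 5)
Your set-up (contradiction, Lemmas \ref{triangular lemma} and \ref{right disjoint}, right-disjointness of the $T(\widehat{r_{n,k,j}})$, Lemma \ref{Rad}, Proposition \ref{lemma main} to localize mass in a corner $P_{m_1}$, and Lemma \ref{finite dim} to prune the columns $j$ on which $P_{m_1}$ already captures too much mass) matches the paper's ``common part''. But your closing step — comparing an upper bound of order $(2^k)^{1/p}$ with a lower bound of order $(2^k)^{1/2}$ as $k\to\infty$ — is not the paper's argument and does not close. Two concrete problems: first, the upper $p$-estimate of Lemma \ref{noncomest} requires the summands to be disjointly supported from the left \emph{and} from the right, whereas the images $T(\widehat{r_{n,k,j}})$ (or $T(x_{k,j})$) over the column index $j$ are only right-disjoint, so you cannot extract an $O((2^k)^{1/p})$ upper bound for $\|\sum_j T(x_{k,j})\|_{C_F}$ this way. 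Second, even granting such a bound, the lower bound you would compare it with is $\|T^{-1}\|^{-1}/\phi_E(2^{-k})$, and condition (e) only gives $1/\phi_E(2^{-k})=o(2^{k/2})$; under the stated hypotheses nothing forces $1/\phi_E(2^{-k})\gg 2^{k/p}$ (take $E=L_r$ with $r>p$), so no contradiction results. The $k^{1/2}$ versus $k^{1/p}$ tension lives entirely \emph{inside} Proposition \ref{lemma main}, where $k$ counts Rademacher functions (the index $n$), not columns (the index $j$); transplanting it to the column index is the error. Relatedly, your claim that $\phi_E(2^{-k})\ge A'2^{-k/q}$ imports hypothesis \eqref{EF} of Proposition \ref{lem:4}, which is precisely what Proposition \ref{prp main} is designed to avoid.

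The missing idea is the iteration. After producing the first corona, one shows via Lemma \ref{rdisjoint estimate}, the distributional concavity inequality \eqref{ncdceq}, and condition (d) (which converts $\|x^{\oplus 2^{k_1-1}}\|_{C_F}$ back into $\|x\|_{C_F}$ at the cost of $\phi_E(2^{-k_1})/\|\sigma_{2^{1-k_1}}\|_{F\to F}\ge (2A)^{-1}$) that $\|(P_{m_2}-P_{m_1})T(r_n)\|_{C_F}\ge c_0\gamma$ uniformly in $n\ge k_1$ — a \emph{constant} lower bound, not one growing in $k$. One then repeats the whole procedure to build increasing sequences $\{k_i\}$, $\{m_i\}$ with $\|(P_{m_{i+1}}-P_{m_i})T(r_n)\|_{C_F}\ge c_0\gamma$ for all $n\ge k_i$. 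Since for $n\ge k_l$ the $l-1$ coronas $(P_{m_{i+1}}-P_{m_i})T(r_n)$ are left-disjoint, the lower $q$-estimate coming from condition (c) (via Lemmas \ref{rdisjoint estimate} and \ref{noncomest}) yields $\|T\|\ge\|T(r_n)\|_{C_F}\ge K_Fc_0\gamma(l-1)^{1/q}$ for every $l$, which is the actual contradiction. Conditions (b), (c) and (d) are used exactly here, and the divergence in $l^{1/q}$ — not a $p$-versus-$2$ exponent comparison over the columns — is what kills the embedding.
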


\begin{rem}
\label{rem1}
The  fact that $F$ is $q$-concave and has an upper $p$-estimate for $2<p\le q<\infty$ together the Boyd interpolation theorem (see e.g. \cite[the discussion on p.132 and Theorem 2.b.11]{LT2}, \cite{KM-Handbook} or \cite[Theorem~3.5.16]{Bennett_S}) implies that $F$ is   an interpolation space with respect to the couple $(\ell_2,\ell_q)$. Moreover, by the same reasons, $F$ has  the Rademacher type 2 \cite[Proposition~1.f.3 and Theorem~1.f.7]{LT2}. 
These observations allow us to use Proposition \ref{lemma main} and Lemmas \ref{triangular lemma}, \ref{right disjoint} in the proof below.
\end{rem}


\subsection{Common part of two proofs}

On the contrary, suppose that there exists an isomorphic embedding $T$ of $E$ into $C_F$.
Applying Lemmas \ref{triangular lemma} and \ref{right disjoint}, we may assume that $T$ maps $E$ into the upper triangular part $U_{F}$ of $C_F$ and that, for each $n\geq0,$ the elements $T(h_{2^n+i}),$ $0\leq i<2^n,$ where $h_j$, $j=1,2,\dots$, are the Haar functions, are disjointly supported from the right.

From the definitions of the Rademacher and Haar functions it follows
$$
r_k(t) = \sum _{i=0}^{2^k-1} h_{2^k +i  }, ~t\in [0,1], ~k=0,1,2,\dots$$
Hence, recalling that, for any positive integers $n\geq k$ and $j=0,1,\dots,2^k-1$, $r_{n,k,j}:=r_n\chi_{(\frac{j}{2^k},\frac{j+1}{2^k})}$ (see Section~\ref{Haar, Rademacher}), we have
$$
r_{n,k,j}=\sum_{i=0}^{2^{n-k}-1}h_{2^n+j2^{n-k}+i}.$$
Therefore, for any fixed positive integers $k$ and $n\geq k$, the elements $T(\widehat{r_{n,k,j}}),$ $j=0,1,\dots,2^k-1,$ where
$$
\widehat{r_{n,k,j}}:=\frac{r_{n,k,j}}{\phi_E(2^{-k})},$$
are also disjointly supported from the right.

Without loss of generality, we will suppose that $\|T^{-1}\|=1$. Then, by Lemma~\ref{Rad},
\begin{eqnarray}
\left\|
\sum_{n=k}^\infty a_nT(\widehat{r_{n,k,j}})\right\|_{C_F}&=&\left\|
T\Big(\sum_{n=k}^\infty a_n\widehat{r_{n,k,j}}\Big)\right\|_{C_F}\nonumber\\
&\ge& \left\|\sum_{n=k}^\infty a_n\widehat{r_{n,k,j}}\right\|_{E} \stackrel{\eqref{l2.7}}{\ge} \frac{1}{32}\left\|(a_n) \right \|_{\ell_2}.
\label{eq10}
\end{eqnarray}
Applying now Proposition \ref{lemma main} to the sequence $\{u_n\}_{n=1}^\infty$,  $u_n:=T(r_n)=T(\widehat{r_{n,0,0}})$, we find $m_1\in\mathbb{N}$ such that
\begin{equation}
\label{eq11}
\left\|P_{m_1}T(r_n)\right\|_{C_F}\ge \gamma,\;\;n=1,2,\dots,
\end{equation}
where $\gamma:=\left(2K_0\max(32,\|T\|)\right)^{-1}$ ($K_0$ is the constant from inequality \eqref{eq14a}).

Let $n,k\in\mathbb{N}$, $n\ge k$,
be fixed.
Denote by $Q_{n,k}$ the natural isometry of the span $E_k:=[x_{k,j},j=0,1,\dots,2^k-1]$ in $E$,
where $x_{k,j}:=\phi_E(2^{-k})^{-1}\chi_{\Delta_k^j}$ ($\Delta_k^j=[j2^{-k},(j+1)2^{-k})$, $k=1,2,\dots$, $j=0,1,\dots,2^k-1$),
onto the span $Y_k:=[\widehat{r_{n,k,j}},j=0,1,\dots,2^k-1]$ in $E$.
Observe that, for each $m\in\mathbb{N}$,
the space
$(P_mU_F,\left\|\cdot\right\|_{C_F})$ is isomorphic to $\ell_2$, and hence the operator $P_{m_1}TQ_{n,k}$
is bounded from $E_k$ in $\ell_2$. By the condition $\delta_{\phi_E}<1/2$ (see also Remark \ref{rem lem dim}), we can use Lemma \ref{finite dim}. Consequently, for every $\epsilon>0$ there is $k_1\in\mathbb{N}$ such that for all $n\ge k_1$
$$
\Big|\Big\{j=0,1,\dots,2^{k_1}-1:\,
\frac{
\left\|
P_{m_1}T(\widehat{r_{n,k_1,j}})
\right\|_{\ell_2}}{
\left\|
P_{m_1}T \right\|_{E_k\to \ell_2}}\ge\epsilon\Big\}\Big|\le\epsilon\cdot2^{k_1},$$
 where for a finite set $B$ we put $|B|:={\rm card}\,B$. Noting that
$$  \frac{\left\|P_{m_1}T(\widehat{r_{n,k_1,j}})
\right\|_{C_F}}
{\left\|
P_{m_1}T(\widehat{r_{n,k_1,j}})
\right\|_{\ell_2}}
  \le   d(P_{m_1}U_F,\ell_2) $$
and
$$\left\|
P_{m_1}T \right\|_{E_k\to \ell_2}
\le \left\|
P_{m_1}T \right\|_{E\to \ell_2} \le \left\|
P_{m_1}T \right\|_{E\to C_F }   d(P_{m_1}U_F,\ell_2), $$
we conclude that
$$\frac{\left\|P_{m_1}T(\widehat{r_{n,k_1,j}})
\right\|_{C_F}}
{C(m_1,T)}
  \le \frac{
\left\|
P_{m_1}T(\widehat{r_{n,k_1,j}})
\right\|_{\ell_2}}{
\left\|
P_{m_1}T \right\|_{E_k\to \ell_2}},$$
where
$$
C(m_1,T):=\left\|P_{m_1}T\right \|_{E\to C_F}\cdot d(P_{m_1}U_F,\ell_2)^2$$ ($d(X,Y)$ is the Banach--Mazur distance between Banach spaces $X$ and $Y$),
we obtain
$$
\left|
\{j=0,1,\dots,2^{k_1}-1:\,
\left\|
P_{m_1}T(\widehat{r_{n,k_1,j}})
\right\|_{C_F}\ge\epsilon C(m_1,T)\}
\right|
\le
\epsilon\cdot2^{k_1}.$$
Choose $\epsilon\in (0,1/2)$ so that $\gamma>2\epsilon C(m_1,T)$. Then, from the preceding inequality it follows that
$$
|\{j=0,1,\dots,2^{k_1}-1:\, \left\|P_{m_1}T(\widehat{r_{n,k_1,j}})\right\|_{C_F}\ge\gamma/2\}|\le 2^{k_1-1}.$$
Therefore, letting
$$
A_n:=\{j=0,1,\dots,2^{k_1}-1:\, \left\|P_{m_1}T(\widehat{r_{n,k_1,j}})\right\|_{C_F}\le\gamma/2\},$$
for all $n\ge k_1$ we get
\begin{equation}
\label{eq12}
|A_n|\ge 2^{k_1-1}.
\end{equation}

Next, in view of inequality \eqref{eq10} and Remark \ref{rem1}, we can apply Proposition \ref{lemma main} to each of the sequences $(T(\widehat{r_{n,k_1,j}}))_{n\ge k_1}$, $j=0,1,\dots,2^{k_1}-1$, and find a positive integer $m_2>m_1$ such that
$$
\left\|P_{m_2}T(\widehat{r_{n,k_1,j}})\right\|_{C_F}\ge\gamma,\;\;n\ge k_1,\;j=0,1,\dots,2^{k_1}-1.$$
Combining this inequality together with the definition of sets $A_n$, we infer
\begin{equation}
\label{eq13}
\left\|(P_{m_2}-P_{m_1})T(\widehat{r_{n,k_1,j}})\right \|_{C_F}\ge\gamma/2,\;\;n\ge k_1,\;j\in A_n.
\end{equation}

As was mentioned above, the elements $T(\widehat{r_{n,k,j}}),$ $0\leq j<2^k,$ are disjointly supported from the right. Clearly, the elements
$$
(P_{m_2}-P_{m_1})T(\widehat{r_{n,k_1,j}}),\;\;0\leq j<2^{k_1},$$
have the same property.

\subsection{Proof of Proposition \ref{lem:4}}


\begin{proof}[Proof of Proposition \ref{lem:4}]
Observe that $$
T(r_n)=\phi_E(2^{-k_1})\cdot \sum_{j=0}^{2^{k_1}-1}T(\widehat{r_{n,k_1,j}}). $$
By the lower $q$-estimate of $C_F$ \cite{ArLin} (see also \cite[Proposition 5.1]{DDS2014}), we have
\begin{align*}
\norm{(P_{m_2}- P_{m_1}) T(r_n)}_{C_{F}} \ge c_F
\phi_E(2^{-k_1})  \left(
 \sum_{j=0}^{2^{k_1 } -1}
\norm{(P_{m_2}- P_{m_1}) T(\widehat{r_{n,k_1,j}}) }_{C_F}^{q }
\right)^{1/q},
\end{align*}
where $c_F$ is the lower $q$-estimate constant for $C_F$. Moreover, thanks to condition~\eqref{EF},
$$ \phi_E(2^{-k_1}) \cdot   2^{ k_1/q}\ge A.$$
Thus, from \eqref{eq12} and \eqref{eq13} it follows that for $n\ge k_1$
\begin{align*}
\norm{(P_{m_2}- P_{m_1}) T(r_n)}_{C_{F}} \ge c_F \cdot \phi_E(2^{-k_1}) \cdot  2^{ \frac{k_1 -1}{ q}  }  \cdot   \frac{\gamma }{2} \ge c_F\cdot  A \cdot 2^{-1/ q} \frac{\gamma}{2 } \ge \frac{A}{4} c_F \gamma .
\end{align*}

Next, using Lemma \ref{finite dim} once more and repeating the same reasoning, we find a positive integer $k_2>k_1$ such that
$$
|\{j=0,1,\dots,2^{k_2}-1:\, \left\|P_{m_2}T(\widehat{r_{n,k_2,j}})\right\|_{C_F}\ge\gamma/2\}|\le 2^{k_2-1}.$$
Applying then Proposition \ref{lemma main} to each of the sequences $(T(\widehat{r_{n,k_2,j}}))_{n\ge k_2}$, $j=0,1,\dots,2^{k_2}-1$,
and
repeating the above argument, we find  $m_3>m_2$ satisfying the condition
$$
\left\|(P_{m_3}-P_{m_2})T(r_n )\right \|_{C_F}\ge \frac{A}{4}c_F  \gamma \;\;\mbox{for all}\;\; n\ge k_2.$$
Proceeding in the same way, we obtain two increasing sequences of positive integers $\{k_i\}_{i=1}^\infty$ and $\{m_i\}_{i=1}^\infty$ such that for all $n\ge k_i$
\begin{equation}
\label{eq13ab}
\left\|(P_{m_{i+1}}-P_{m_i})T(r_n)\right\|_{C_F}\ge \frac{A}{4} c_F \gamma,\;\;i=1,2,\dots
\end{equation}

Fix $l\in\mathbb{N}.$ For each $n\geq k_l,$ the elements $ (P_{m_{i+1}}-P_{m_i})T(r_n),$ $1\leq i \leq l-1,$ are disjointly supported from the left
with   left supports $ \sum_{j= m_i+1}^{m_{i+1}}e_{jj} $.
 Hence, by the lower $q$-estimate of $F$ \cite{ArLin} (see also \cite[Proposition 5.1]{DDS2014}), we have
$$
\left\|T(r_n)\right\|_{C_F }\geq
c_F
\left (
\sum_{i=1}^{l-1} \left\|(P_{m_{i+1}}-P_{m_{i}})T(r_n)\right\|_{C_F }^{q}
\right)^{1/q}
\stackrel{\eqref{eq13ab}}{\geq}
 c_F (l-1)^{1/q} \frac{Ac_F\gamma }{4}.$$
Since $l\in\mathbb{N}$ is arbitrary, this implies that the operator $T$ is unbounded, which contradicts the hypothesis.
\end{proof}


\subsection{Proof of Proposition \ref{prp main}}
\begin{proof}[Proof of Proposition \ref{prp main}]
Taking into account that
$$
T(r_n)=\phi_E(2^{-k_1})\cdot \sum_{j=0}^{2^k-1}T(\widehat{r_{n,k_1,j}})$$
and  $(P_{m_2}-P_{m_1})T(\widehat{r_{n,k_1,j}})$, $0\le j<2^{k_1}$,  are disjointly supported from the left,
by Lemma \ref{rdisjoint estimate}, we have
\begin{align}\label{directsum}
\left\|(P_{m_2}-P_{m_1})T(r_n) \right \|_{C_F}
&\geq \phi_E(2^{-k_1})\left\|\bigoplus_{j=0}^{2^{k_1}-1}\big((P_{m_2}-P_{m_1})T(\widehat{r_{n,k_1,j}})\big)\right\|_{C_F}\nonumber\\
&\geq
\phi_E(2^{-k_1})\left\|\bigoplus_{j\in A_{n}}\big((P_{m_2}-P_{m_1})T(\widehat{r_{n,k_1,j}})\big)
\right\|_{C_F }.
\end{align}
Here, $\oplus_j \big((P_{m_2}-P_{m_1})T(\widehat{r_{n,k_1,j}})\big)\in \oplus_j \cH  $ stands for the direct sum of operators $(P_{m_2}-P_{m_1})T(\widehat{r_{n,k_1,j}})$ and, under the natural isomorphism from $\oplus_j \cH$  to $\cH$ (see Remark \ref{ncdc}), summands in the direct sum  can be viewed as elements in $\cH$, which are pairwise disjontly supported from the left and from the right.
Since $F$ is distributionally concave, say, with the constant $c_F$, in view of the preceding relations, definition of the dilation operator
and
 the quasi-concavity of the fundamental function $\phi_E$, for all $n\ge k_1$ we obtain

\begin{align*}
\left\|(P_{m_2}-P_{m_1})T(r_n)\right\|_{C_F}
\stackrel{\eqref{directsum}}{\ge} & \phi_E(2^{-k_1})\left\|\bigoplus_{j\in A_{n}}\big((P_{m_2}-P_{m_1})T(\widehat{r_{n,k_1,j}})\big)
\right\|_{C_F }.\\
 \stackrel{\eqref{ncdceq}}{\ge}&
c_F\phi_E(2^{-k_1})\cdot\min_{j\in A_n}
\left\|
\big((P_{m_2}-P_{m_1})T(\widehat{r_{n,k_1,j}})\big)^{\oplus |A_n|}
\right\|_{C_F}\\
 \stackrel{\eqref{eq12}}{\ge} & c_F\phi_E(2^{-k_1})\cdot\min_{j\in A_n}
\left\|
\big((P_{m_2}-P_{m_1})T(\widehat{r_{n,k_1,j}})\big)^{\oplus 2^{k_1-1}}
\right\|_{C_F}\\
{\ge}~&
\frac{c_F\phi_E(2^{-k_1})}{ \left\|\sigma_{2^{1-k_1}}\right\|_{F\to F}}\cdot
\min_{j\in A_n} \left\|(P_{m_2}-P_{m_1})T(\widehat{r_{n,k_1,j }})\right\|_{C_F}\\
 \stackrel{(d) }{\ge}~&
\frac{c_F}
{A}\cdot\frac{\phi_E(2^{-k_1})}{\phi_E(2^{1-k_1})}
\cdot \min_{j\in A_n}
\left\|
(P_{m_2}-P_{m_1})T(\widehat{r_{n,k_1,j}})
\right\|_{C_F}\\
 \ge~&
\frac{c_F}{2A}\cdot  \min_{j\in A_n} \left\|(P_{m_2}-P_{m_1})T(\widehat{r_{n,k_1,j}})\right\|_{C_F}.
\end{align*}
Thus, by \eqref{eq13}, for all $n\ge k_1$ we have
$$
\left\|
(P_{m_2}-P_{m_1})T(r_n)
\right\|_{C_F}\ge c_0\gamma,$$
where $c_0$ depends only on $E$ and $F$.

Next, using Lemma \ref{finite dim} once more and repeating the same reasoning, we find a positive integer $k_2>k_1$ such that
$$
|\{j=0,1,\dots,2^{k_2}-1:\,
\left\|
P_{m_2}T(\widehat{r_{n,k_2,j}})
\right\|_{C_F}\ge \gamma/2\}|\le 2^{k_2-1}.$$
Applying then Proposition~\ref{lemma main} to each of the sequences $(T(\widehat{r_{n,k_2,j}}))_{n\ge k_2}$, $j=0,1,\dots,2^{k_2}-1$, we find  $m_3>m_2$ satisfying the condition
$$
\left\|
(P_{m_3}-P_{m_2})T(r_n)
\right\|_{C_F}\ge c_0\gamma\;\;\mbox{for all}\;\; n\ge k_2.$$
Proceeding in the same way, we obtain two increasing sequences of positive integers $\{k_i\}_{i=1}^\infty$ and $\{m_i\}_{i=1}^\infty$ such that for all $n\ge k_i$
\begin{equation}
\label{eq13a}
\left\|
(P_{m_{i+1}}-P_{m_i})T(r_n)
\right\|_{C_F}\ge c_0\gamma,\;\;i=1,2,\dots \end{equation}

Fix $l\in\mathbb{N}.$ For each $n\geq k_l,$ the elements $(P_{m_{i+1}}-P_{m_i}) T(r_n),$ $1\leq i \leq l-1,$ are disjointly supported from the left. Hence, it follows from Lemma \ref{rdisjoint estimate} that
$$
\left\|T(r_n)\right\|_{C_F }\geq
\left\|\sum_{i=1}^{l-1}(P_{m_{i+1}}-P_{m_{i}})T(r_n)\right\|_{C_F }
\geq
\left\|\bigoplus_{i=1}^{l-1}(P_{m_{i+1}}-P_{m_{i}})T(r_n) \right\|_{C_F}.$$
Moreover, 
since the space $F$ is $q$-concave,
it follows that $F$ satisfies a lower $q$-estimate with a constant $K_F$\cite[p.85]{LT2}.
Thus, applying Lemma \ref{noncomest},
 we obtain that   for every positive integer $l$ and all $n\ge k_l$,
\begin{align*}
& \quad \|T\|
\ge\left\| T(r_n)\right\|_{C_F}\ge \left\|\bigoplus_{i=1}^{l-1}(P_{m_{i+1}}-P_{m_{i}})T(r_n) \right\|_{C_F}\\
&~\ge K_F \big(\left\|P_{m_1}T(r_n)\right\|_{C_F}^q+ \left\|(P_{m_{2}}-P_{m_1})T(r_n)\right\|_{C_F}^q+\dots +\left\|(P_{m_{l}}-P_{m_{l-1}})T(r_n)\right\|_{C_F}^q\big)^{1/q}\\
&\stackrel{ \eqref{eq13a}} {\ge}K_F  c_0\gamma (l-1)^{1/q}.
\end{align*}
Since $l\in\mathbb{N}$ is arbitrary, this implies that the operator $T$ is unbounded, which contradicts the hypothesis.
\end{proof}

\section{Applications}\label{app}
The most natural examples  satisfying all the conditions of Propositions \ref{lem:4} and \ref{prp main} are $L_{p,q}$-spaces, with $2<  q\le p<\infty$ and suitable Orlicz spaces.

Recall that the Lorentz sequence space $\ell_{r,s}$, $1<r<\infty$, $1\le s<\infty$, admits an upper $\min(r,s)$-estimate and a lower $\max(r,s)$-estimate (see Section~\ref{convexity} and \cite{Creekmore,KMP,LT2,LT1,Dilworth}). Therefore, $\ell_{r,s}$ has a lower $q$-estimate and an upper $p$-estimate for some $2<p\le q<\infty$ if and only if $2<r,s<\infty$.
Moreover, since $\phi_{L_{r_1,s_1}}(t)=t^{1/r_1}$, $1<r_1<\infty$, $1\le s_1<\infty$, one can easily verify that the rest of conditions of Proposition \ref{lem:4} for the spaces $F=\ell_{r,s}$ and $E=L_{r_1,s_1}$ is satisfied if and only if $r_1\ge \max(r,s)$. From these observations and Proposition \ref{lem:4} we get the following result that extends \cite[Theorem 6]{AL} to the class of $L_{p,q}$-spaces. Namely,  \cite[Theorem 6]{AL}  is the special case of the assertion below when $p_1=p_2= q_1=q_2$.
\begin{thm}\label{Lor}
For arbitrary $1\le q_1<\infty$, $q_2,p_2>2$, and $\max(p_2,q_2)\le p_1<\infty$, we have
$$
L_{p_1 ,q_1}(0,1)\not\hookrightarrow  C_{p_2,q_2}. $$
In particular,  if $2<q\le p<\infty  $, then
$$
L_{p,q}(0,1)\not\hookrightarrow C_{p,q}. $$
\end{thm}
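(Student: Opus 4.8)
The plan is to deduce Theorem~\ref{Lor} as a direct application of Proposition~\ref{lem:4}, so the entire task reduces to checking that the pair $F=\ell_{p_2,q_2}$, $E=L_{p_1,q_1}(0,1)$ satisfies the hypotheses of that proposition under the stated inequalities $1\le q_1<\infty$, $q_2,p_2>2$, $\max(p_2,q_2)\le p_1<\infty$. First I would recall the general facts quoted in Section~\ref{convexity}: the Lorentz sequence space $\ell_{r,s}$ (with $1<r<\infty$, $1\le s<\infty$) admits an upper $\min(r,s)$-estimate and a lower $\max(r,s)$-estimate, and these transfer to $q$-convexity/$q$-concavity by the remarks following the definition. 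Applying this with $r=p_2$, $s=q_2$ and using $p_2,q_2>2$, the space $F=\ell_{p_2,q_2}$ admits an upper $p$-estimate with $p=\min(p_2,q_2)>2$ and is $q$-concave with $q=\max(p_2,q_2)<\infty$; since $p\le q$, this is exactly condition~(a) plus the $q$-concavity requirement of Proposition~\ref{lem:4}, with $2<p\le q<\infty$.

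Next I would verify the conditions on $E=L_{p_1,q_1}(0,1)$. Its fundamental function is $\phi_E(t)=t^{1/p_1}$ (independent of $q_1$), so $\mathcal M_{\phi_E}(t)=t^{1/p_1}$ and the dilation exponents satisfy $\gamma_{\phi_E}=\delta_{\phi_E}=1/p_1$. Since $p_1\ge\max(p_2,q_2)>2$, we get $\delta_{\phi_E}=1/p_1<1/2$, which is precisely the first hypothesis of Proposition~\ref{lem:4} (and, by Remark~\ref{rem lem dim}, it implies the condition \eqref{lem dim} used there). For condition~\eqref{EF} we need $\phi_E(1/n)=n^{-1/p_1}\ge A n^{-1/q}$ for all $n\in\mathbb N$ with $q=\max(p_2,q_2)$; since $p_1\ge q$, one has $n^{-1/p_1}\ge n^{-1/q}$, so \eqref{EF} holds with $A=1$. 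Thus all hypotheses of Proposition~\ref{lem:4} are met, and we conclude $L_{p_1,q_1}(0,1)\not\hookrightarrow C_{p_2,q_2}$. The ``in particular'' statement follows by taking $p_1=p_2=p$ and $q_1=q_2=q$ with $2<q\le p<\infty$: then $\max(p_2,q_2)=p=p_1$, so all inequalities are satisfied.

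There is no real obstacle here — the proof is essentially a bookkeeping exercise matching the inequalities $2<q_2,p_2$ and $p_1\ge\max(p_2,q_2)$ against the four conditions of Proposition~\ref{lem:4}. The only point requiring a moment's care is the direction of the upper/lower estimate exponents for $\ell_{p_2,q_2}$: one must remember that the \emph{smaller} of $p_2,q_2$ governs the upper estimate and the \emph{larger} governs the lower estimate, and that both must exceed $2$ for Proposition~\ref{lem:4} to apply, which is why the hypothesis demands $q_2,p_2>2$ rather than just $q_2\le p_2$. Equally, one should note that it is $p_1$ (not $q_1$) that appears in $\phi_E$, so the constraint $p_1\ge\max(p_2,q_2)$ is exactly what makes both $\delta_{\phi_E}<1/2$ and \eqref{EF} work, while $q_1$ is otherwise unconstrained beyond $1\le q_1<\infty$ (separability of $L_{p_1,q_1}$ being automatic in this range, as recorded in Subsection~\ref{deflor}).
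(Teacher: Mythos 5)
Your argument is correct and coincides with the paper's own derivation: the paragraph preceding Theorem~\ref{Lor} verifies exactly the same hypotheses of Proposition~\ref{lem:4} for $F=\ell_{p_2,q_2}$ and $E=L_{p_1,q_1}(0,1)$, using the upper $\min(p_2,q_2)$-estimate and lower $\max(p_2,q_2)$-estimate of $\ell_{p_2,q_2}$ together with $\phi_E(t)=t^{1/p_1}$ to get $\delta_{\phi_E}<1/2$ and condition~\eqref{EF}. The bookkeeping with the exponents, including the role of $p_1\ge\max(p_2,q_2)$ and the irrelevance of $q_1$, matches the paper's reasoning exactly.
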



We apply Proposition \ref{prp main} below.
\begin{thm}\label{Orl}
Let $M$ and $N$ be Orlicz functions satisfying the following conditions:

(i) $N$ is equivalent to a $p$-convex and $q$-concave Orlicz function on $[0,1]$ for some  $2<p\le q<\infty$;


(ii) there is $A>0$ such that
$$
N(uv)\le A N(u)M(v)\;\;\mbox{if}\;\;0<u\le 1,\; v\ge 1;$$

(iii)
$M$ is equivalent to a $r$-convex Orlicz function on $[1,\infty)$ for some  $r>2$.

Then, $L_{M}[0,1]\not\hookrightarrow  C_{\ell_N}$.
\end{thm}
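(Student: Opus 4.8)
The plan is to apply Proposition~\ref{prp main} with $E:=L_M[0,1]$ and $F:=\ell_N$, so that the whole proof reduces to checking hypotheses (a)--(e) of that proposition. First, both spaces are separable: from (i), putting $s=\tfrac12$ in the inequality of type \eqref{eq15b} characterizing $q$-concavity of $N$ on $[0,1]$ gives $N(2u)\le C2^qN(u)$ for $u\le\tfrac12$, i.e.\ $N\in\Delta_2^0$, so $\ell_N$ is separable, while $L_M$ is separable provided $M\in\Delta_2^\infty$ (which we assume, as is implicit in the setting). Normalize $M(1)=N(1)=1$, legitimate by the standing conventions $\|\chi_{[0,1]}\|_{L_M}=\|e_0\|_{\ell_N}=1$. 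Conditions (a), (b), (c) are then immediate: by (i) and the discussion in Subsection~\ref{convexity}, $\ell_N$ is $p$-convex and $q$-concave with $2<p\le q<\infty$, so it admits an upper $p$-estimate (this is (a)) and is $q$-concave (this is (c)); and (b) holds because every Orlicz sequence space is distributionally concave, as noted after Definition~\ref{definition-of-D-convexity}. For (e), hypothesis (iii) says exactly that $L_M(0,1)$ is $r$-convex for some $r>2$, hence it satisfies an upper $r$-estimate; testing the latter on disjoint characteristic functions of equal measure gives $\phi_{L_M}(2^ka)\le K2^{k/r}\phi_{L_M}(a)$ whenever $2^ka\le1$, so $\mathcal M_{\phi_{L_M}}(t)\le Ct^{1/r}$ for $t\ge1$ and hence $\delta_{\phi_{L_M}}\le1/r<1/2$; by Remark~\ref{rem lem dim} this gives condition \eqref{lem dim}, i.e.\ (e).

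The core of the argument is condition (d), namely $\|\sigma_{1/n}\|_{\ell_N\to\ell_N}\le A\,\phi_{L_M}(1/n)=A/M^{-1}(n)$ for all $n\in\mathbb N$, and this is precisely where the mixed submultiplicativity hypothesis (ii) is used. Fix $n$ and take $a=(a_k)\in\ell_N$ with $\|a\|_{\ell_N}\le1$, so $\sum_kN(|a_k|)\le1$. Since each coordinate $(\sigma_{1/n}a)_j$ is the average of $n$ consecutive coordinates of $a$, convexity of $N$ yields
\[
\sum_jN\big(|(\sigma_{1/n}a)_j|\big)\le\frac1n\sum_kN(|a_k|)\le\frac1n .
\]
In particular $t_j:=|(\sigma_{1/n}a)_j|$ satisfies $N(t_j)\le1/n\le1=N(1)$, hence $t_j\le1$. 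Applying (ii) with $u=t_j\le1$ and $v=M^{-1}(n)\ge1$ gives $N\big(t_jM^{-1}(n)\big)\le A\,N(t_j)\,M(M^{-1}(n))=An\,N(t_j)$, and summing over $j$ we get $\sum_jN\big(t_jM^{-1}(n)\big)\le A$. Enlarging $A$ so that $A\ge1$ and using convexity of $N$ once more, $\sum_jN\big(t_jM^{-1}(n)/A\big)\le1$, that is $\|\sigma_{1/n}a\|_{\ell_N}\le A/M^{-1}(n)$. Since $\phi_{L_M}(1/n)=1/M^{-1}(n)$, condition (d) follows.

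Having verified (a)--(e), Proposition~\ref{prp main} gives $L_M[0,1]\not\hookrightarrow C_{\ell_N}$, as claimed. The only genuinely non-routine point is the verification of (d): one has to recognize that the trivial convexity bound for the averaging operator $\sigma_{1/n}$ can be upgraded to the quantitative estimate in terms of $\phi_{L_M}$ exactly by running it through inequality (ii) with the test value $v=M^{-1}(n)$; the remaining hypotheses are routine applications of the standard dictionary relating convexity/concavity of an Orlicz function near $0$ (respectively near $\infty$) to the geometry of the sequence space $\ell_N$ (respectively of the function space $L_M$).
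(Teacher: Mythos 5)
Your proposal is correct: like the paper, it reduces everything to checking hypotheses (a)--(e) of Proposition~\ref{prp main} for $E=L_M[0,1]$, $F=\ell_N$, and your verifications of (a)--(c) coincide with the paper's. Where you differ is in the two technical conditions. For (d), the paper invokes the Lorentz--Shimogaki majorant $\|\sigma_{1/n}\|_{F\to F}\le 2\sup_m \phi_F(m)/\phi_F(nm)$ from \cite{LS1}, rewrites everything via $\phi_F(m)=1/N^{-1}(1/m)$ and $\phi_E(t)=1/M^{-1}(1/t)$, and after two changes of variables reduces (d) to $N(C^{-1}uv)\le N(u)M(v)$, absorbing the constant $A$ by means of the $p$-convexity of $N$ from hypothesis (i). Your argument is more elementary and self-contained: you estimate the Luxemburg modular of $\sigma_{1/n}a$ directly by convexity of $N$, feed the result through (ii) at the single test point $v=M^{-1}(n)$, and absorb the constant using only plain convexity $N(x/A)\le N(x)/A$ — so you do not need (i) at this step at all, which is a small but genuine simplification. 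For (e), the paper computes $\mathcal M_{\phi_E}(u)=\sup_{w\ge u}M^{-1}(w)/M^{-1}(w/u)$ and unwinds this to the $r$-convexity inequality $M(st)\le Cs^rM(t)$, whereas you pass through the upper $r$-estimate of $L_M$ tested on disjoint indicators to get $\mathcal M_{\phi_E}(t)\lesssim t^{1/r}$; both are correct and of comparable length. One point worth flagging (which you do, and the paper does not): separability of $L_M$, i.e.\ $M\in\Delta_2^\infty$, is needed to invoke Proposition~\ref{prp main} but does not obviously follow from (i)--(iii); it is an implicit standing assumption, and your explicit acknowledgement of this is if anything more careful than the published argument.
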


\begin{proof}
Let us check that conditions (a) --- (e) of Proposition \ref{prp main} are fulfilled for the spaces $E=L_{M}[0,1]$ and $F=\ell_{N}$. Without loss of generality, we assume that $N(1)=M(1)=1$.

First of all, condition (i) implies that the space $F$ is $p$-convex and $q$-concave (and so is separable) \cite{KMP97}.
Thus, since every Orlicz space is distributionally concave (see Section~\ref{Distributional concavity}), the space $F$ satisfies conditions (a) --- (c).

Let us check that (d) is a consequence of conditions (i) and (ii) of the theorem. Indeed, by \cite[Theorem~6]{LS1} (see also \cite[\S\,4, p.~28]{Mal-85} and \cite{Boyd}), we have 
$$
\left\|\sigma_{1/n} \right\|_{F\to F}\le 2\cdot \sup_{m\in\mathbb{N}}\frac{\phi_F(m)}{\phi_F(nm)}.
$$
Therefore, since $\phi_F(m)={1}/{N^{-1}(1/m)}$, $m\in\mathbb{N}$,
it follows
$$
\left\|\sigma_{1/n}\right\|_{F\to F}\le 2\sup_{m\in\mathbb{N}}\frac{N^{-1}(1/(nm))}{N^{-1}(1/m)}\le 2\sup_{0<s\le 1}\frac{N^{-1}(s/n)}{N^{-1}(s)}.$$
On the other hand,
\begin{equation}
\label{eq13b}
\phi_E(t)=\frac{1}{M^{-1}(1/t)},\;\;t>0,
\end{equation}
and so condition (d) is satisfied if there is a constant $C>0$ such that for all $0<s,t\le 1$
\begin{equation}
\label{eq13c}
N^{-1}(st)M^{-1}(1/t)\le CN^{-1}(s).
\end{equation}
Passing to the inverse functions, we get that it is equivalent to the inequality
$$
N(C^{-1}N^{-1}(st)M^{-1}(1/t))\le s,$$
or, after the changes of variables $u=N^{-1}(st)$ and $v=M^{-1}(1/t)$, to the following:
\begin{equation*}
N(C^{-1}uv)\le N(u)M(v),\;\;0<u\le 1,\; v\ge 1.
\end{equation*}
Clearly, we can assume that the constant $A$ in (ii) is bigger than $1$. Therefore, applying successively conditions (ii) and (i), we get
$$
N(A^{-1/p}uv)\le AN(A^{-1/p}u)M(v)\le N(u)M(v),\;\;0<u\le 1,\; v\ge 1.$$
Thus, the preceding inequality and also \eqref{eq13c} hold with $C=A^{1/p}$, which implies (d).

It remains to show that the space $E=L_M[0,1]$ satisfies condition (e). To this end,
we observe that, by definition and \eqref{eq13b}, for every $u\ge 1$
$$
{\mathcal M}_{\phi_E}(u)=\sup_{0<v\le 1/u}\frac{\phi_E(vu)}{\phi_E(v)}=
\sup_{w\ge u}\frac{M^{-1}(w)}{M^{-1}(w/u)}.$$
Therefore, since $r>2$, it suffices to prove that there is a constant $C>0$ such that
\begin{equation}
\label{eq13d}
M^{-1}(w)\le Cu^{1/r}M^{-1}(w/u),\;\;w\ge u\ge 1.
\end{equation}
One can easily check that this is equivalent to the following:
$$
uM(z)\le M(Cu^{1/r}z),\;\;u,z\ge 1.$$
In turn, after the change $s=u^{-1/r}$ we come to the inequality
$$
M(z)\le s^rM(Cz/s),\;\;z\ge 1,\;0<s\le 1,$$
and then, setting $t=z/s$, to
$$
M(st)\le s^rM(Ct),\;\;t\ge st\ge 1.$$
On the other hand, it is easy to see that the latter inequality is an immediate consequence of condition (iii) of the theorem. Indeed, (iii) means (see Subsection~\ref{convexity}) that for some constant $C'\ge 1$
$$
M(st)\le C's^r M(t),\;\;t\ge st\ge 1.$$
Combined this with the fact that $C'M(t)\le M(C't)$, $t>0$, because $M$ is an increasing convex function, we obtain the preceding inequality with $C=C'$. This completes the proof of the theorem.
\end{proof}

As a consequence we get the following result.

\begin{cor}
\label{cor: Orl}
Let $M$ be an Orlicz function satisfying the following conditions:

($\alpha$) $M$
is equivalent to an Orlicz function that is $p$-convex on $[0,\infty) $ and $q$-concave on $[0,1]$ for some  $2<p\le q<\infty$;


($\beta$) there is $A>0$ such that
$$
M(uv)\le A M(u)M(v)\;\;\mbox{if}\;\;0<u\le 1,\; v\ge 1.$$

Then, $L_{M}[0,1]\not\hookrightarrow  C_{\ell_M}$.

In particular, this result holds for every submultiplicative Orlicz function $M$, which is $p$-convex for some $p>2$.
\end{cor}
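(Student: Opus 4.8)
The plan is to obtain Corollary~\ref{cor: Orl} as a direct specialization of Theorem~\ref{Orl} to the case $N=M$, and then to verify separately that a submultiplicative $p$-convex Orlicz function, $p>2$, meets hypotheses $(\alpha)$ and $(\beta)$.

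For the main statement I would take $N:=M$ and check the three hypotheses of Theorem~\ref{Orl}, after normalizing $M(1)=1$. Hypothesis (i) of that theorem is immediate from $(\alpha)$: an Orlicz function equivalent to one that is $p$-convex on $[0,\infty)$ is, a fortiori, equivalent to one that is $p$-convex on $[0,1]$, and $(\alpha)$ also furnishes the $q$-concavity on $[0,1]$ with $2<p\le q<\infty$. Hypothesis (ii) with $N=M$ is literally $(\beta)$. For hypothesis (iii), note that $p$-convexity of $M$ on $[0,\infty)$ means, by the characterization recalled in Subsection~\ref{convexity}, that \eqref{eq15a} holds for all $t>0$ and $0<s\le 1$, hence in particular for $t\ge st\ge 1$; thus $M$ is equivalent to a $p$-convex function on $[1,\infty)$ and we may take $r=p>2$. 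Theorem~\ref{Orl} then yields $L_M[0,1]\not\hookrightarrow C_{\ell_M}$.

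For the ``in particular'' clause, let $M$ be submultiplicative, i.e. $M(uv)\le M(u)M(v)$ for all $u,v\ge 0$ (which forces $M(1)\ge 1$; normalize $M(1)=1$), and $p$-convex on $[0,\infty)$ for some $p>2$. Submultiplicativity gives $(\beta)$ at once, with $A=1$. It remains to produce a finite $q\ge p$ for which $M$ is equivalent to a $q$-concave function on $[0,1]$, that is, by \eqref{eq15b}, $s^qM(t)\le CM(st)$ for all $0<s,t\le 1$. Writing $M(t)=M\bigl(s^{-1}\cdot st\bigr)$ and using submultiplicativity yields $M(t)\le M(1/s)M(st)$; iterating $M(2w)\le M(2)M(w)$ gives $M(w)\le M(2)\,w^{q_0}$ for $w\ge 1$, where $q_0:=\log_2 M(2)<\infty$ and $q_0\ge 0$ since $M(2)\ge M(1)=1$. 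Hence $M(1/s)\le M(2)\,s^{-q_0}$ for $0<s\le 1$, so $s^{q_0}M(t)\le M(2)M(st)$; thus $M$ is equivalent to a $q_0$-concave function on $[0,1]$, and therefore also to a $q$-concave one with $q:=\max(p,q_0)$. Together with the assumed $p$-convexity of $M$ on $[0,\infty)$, this is precisely $(\alpha)$, so the first part applies.

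The argument is essentially bookkeeping about the one-sided convexity/concavity conventions, and the only point requiring a genuine (if short) step is that submultiplicativity forces at most polynomial growth of $M$ at infinity, which supplies the finite concavity exponent $q_0$ needed in $(\alpha)$; beyond keeping the normalizations and the domains $[0,1]$, $[1,\infty)$, $[0,\infty)$ straight, I anticipate no real obstacle.
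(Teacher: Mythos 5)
Your proposal is correct and follows essentially the same route as the paper: the main statement is obtained by applying Theorem~\ref{Orl} with $N=M$, and the ``in particular'' clause reduces to checking that submultiplicativity forces $q$-concavity on $[0,1]$ for some finite $q$. The only difference is that you derive the inequality $s^{q_0}M(t)\le M(2)M(st)$ directly by iterating the doubling bound, whereas the paper observes that submultiplicativity implies $M\in\Delta_2^0$ and cites \cite{KMP97} for the same conclusion; your version simply inlines that standard argument.
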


\begin{proof}
The first assertion of the corollary is a straightforward consequence of Theorem~\ref{Orl}.
 To show the second one, it suffices to check that $M$ is equivalent to a $q$-concave Orlicz function on $[0,1]$ for some finite $q$. To this end, let us observe that the submultiplicativity of $M$ clearly implies that $M\in \Delta_{2}^{0}$ (see Subsection~\ref{deflor}). In turn, then there exist $q<\infty$ and $C>0$ such that for all $0<s,t\le 1$ we have $s^qM(t)\le CM(st)$ (see e.g. \cite[Proposition on p.~121]{KMP97}). This observation finishes the proof (see Subsection~\ref{convexity}).
\end{proof}


%

 \section{Lack of isomorphic embeddings of $L_{2,q}$-spaces into ideals $C_{2,q}$.}
 \label{t2}

In the previous sections, we considered symmetric spaces, which are located either between $L_2$ and $L_\infty$ or between $L_1$ and $L_2$.
In this final part of the paper, we study the spaces $L_{2,q}$, $1\le q<\infty$, which are being the most typical examples of spaces ``very close''\:to the space $L_2$ (in particular, they belong to  neither  the set
${\rm Int}(L_1,L_2)$  nor ${\rm Int}(L_2,L_\infty)$).
We show that $L_{2,q}$ cannot be isomorphically embedded into the ideal $C_{2,q}$ for every $1\le q<\infty$. Here, we make use of the above-mentioned Arazy's result \cite[Corollary~3.2]{Arazy81}, some properties of sequences of independent functions in $L_{2,q}$-spaces obtained in~\cite{CD89,AS08} and recent results on the embeddings of $\ell_{p,q}$-spaces from  \cite{KS,SS2018}.

\begin{theorem}\label{theor:easy}
For every $1\le q<\infty$, $q\ne 2$, the space $L_{2,q}:=L_{2,q}(0,1)$ fails to be isomorphically embedded into the ideal $C_{2,q}$.
\end{theorem}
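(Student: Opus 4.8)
The plan is to argue by contradiction: suppose $T\colon L_{2,q}(0,1)\to C_{2,q}$ is an isomorphic embedding, and produce inside $L_{2,q}(0,1)$ a subspace whose internal structure cannot be realised inside $C_{2,q}$. Since $L_{2,q}$ lies in neither ${\rm Int}(L_1,L_2)$ nor ${\rm Int}(L_2,L_\infty)$, the natural source of such a subspace is the span of a suitably chosen sequence of independent functions, whose behaviour in $L_{2,q}$ is controlled by the work of Carothers--Dilworth \cite{CD89} and Astashkin--Sukochev \cite{AS08}.

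First I would fix a sequence $\{f_n\}\subset L_{2,q}(0,1)$ of independent mean-zero functions and use the quantitative estimates of \cite{CD89,AS08}: the norm of a linear combination $\sum_n a_nf_n$ in $L_{2,q}$ is equivalent to the maximum of an $\ell_2$-type term (governed by the $L_2$-norms of the $f_n$) and a ``disjointification'' term (the $L_{2,q}(0,\infty)$-norm of $\bigoplus_n a_nf_n$). Choosing the common, or slowly varying, distribution of the $f_n$ appropriately, the closed span $Y:=[f_n]$ becomes isomorphic to an explicitly identifiable sequence space $G$ with an unconditional basis; the two-term nature of the norm can be used to arrange that $G$ is genuinely ``mixed'', in particular $G\not\cong\ell_2$. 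The restriction $T|_Y$ is then an isomorphic embedding of $G$ into $C_{2,q}$.

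Next I would bring this embedding to a canonical form inside $C_{2,q}$. Using a gliding-hump and small-perturbation argument with respect to the natural finite-dimensional decomposition of $C_{2,q}$ into ``shells'' $\{(i,j):\max(i,j)=m\}$ (and the projections $R_m,P_m$ introduced in Section~\ref{pr}), after passing to a subsequence and perturbing one may assume that $\{T(f_{n_k})\}$ is a shell-block basic sequence. By Arazy's structural result \cite[Theorem~2.4]{Arazy81}, the span of such a sequence in $C_F$ is isomorphic to a canonical sequence space built from $F=\ell_{2,q}$ together with the sequence of shell widths; after a further block argument each such canonical space contains a copy of $\ell_2$, or of some $\ell_r$ with $r\neq 2$, or of a ``dilation'' of $\ell_{2,q}$. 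Hence some subspace of $Y\subseteq L_{2,q}(0,1)$ is isomorphic to one of these shell-block spaces, and in the $\ell_r$ cases one may additionally invoke \cite[Corollary~3.2]{Arazy81} ($\ell_r\hookrightarrow C_F$ iff $\ell_r\hookrightarrow F$) together with the subspace structure of $\ell_{2,q}$.

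The contradiction then comes from a case analysis on the shell widths, fed by the recent non-embedding theorems for $\ell_{2,q}$ from \cite{KS,SS2018}: in every case that does not collapse $Y$ to a copy of $\ell_2$, one extracts a copy of $\ell_{2,q}$ sitting inside $L_{2,q}(0,1)$, which is impossible since $\ell_{2,q}\not\hookrightarrow L_{2,q}(0,1)$; and the remaining case contradicts the choice of $\{f_n\}$, which was made precisely so that $Y\not\cong\ell_2$. I expect the main obstacle to be exactly this interface: reconciling the description of subspaces of $L_{2,q}(0,1)$ coming from distributions of independent functions with Arazy's shell-block description of subspaces of $C_{2,q}$, and showing that the only way the mixed space $G$ could be realised as a shell-block space is by smuggling a forbidden copy of $\ell_{2,q}$ into the function space. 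The regimes $1\le q<2$ and $2<q<\infty$ will likely require somewhat different model sequences $\{f_n\}$, since $L_{2,q}$ lies on opposite sides of $L_2$ in the two cases.
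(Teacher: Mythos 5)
Your toolbox is the paper's toolbox: argue by contradiction, feed in a sequence of independent functions whose $L_{2,q}$-behaviour is controlled by Carothers--Dilworth and Astashkin--Sukochev, push its image through Arazy's structure theory for basic sequences in $C_F$, and close with the non-embedding results of \cite{KS,SS2018}. Two remarks on how the paper actually executes this, where your sketch either takes a detour or goes astray. First, the gliding-hump/shell-block reduction via \cite[Theorem~2.4]{Arazy81} is not needed: because the $f_k$ are chosen i.i.d.\ (hence form a \emph{symmetric} basic sequence), the packaged consequence \cite[Corollary~3.2]{Arazy81} applies directly and says that $[f_k]$, viewed inside $C_{2,q}$, embeds into $\ell_2\oplus\ell_{2,q}$; after noting that $\{f_k\}$ is weakly null and invoking Bessaga--Pe{\l}czy\'{n}ski, one is reduced to analysing a pairwise disjoint sequence $x_k=y_k+z_k$ in $\ell_2\oplus\ell_{2,q}$. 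Your route can presumably be made to work, but the symmetry of the basic sequence is what makes the clean statement available, and you never mention it.

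Second, and this is the genuine gap: your endgame --- ``in every case that does not collapse $Y$ to a copy of $\ell_2$, one extracts a copy of $\ell_{2,q}$'' --- is not what happens, and as stated it would fail for $q>2$. For $1\le q<2$ the trichotomy on the disjoint parts $z_k$ is: (i) $\|z_k\|\to0$ along a subsequence, giving the forbidden $\ell_2$-collapse; (ii) $\sup_i|z_k(i)|\to0$, in which case Dilworth's lemma makes $\{z_k\}$ equivalent to the $\ell_q$-basis and the contradiction is with the upper domination $\|\sum_k a_kf_k\|_{L_{2,q}}\le C\|(a_k)\|_{\ell_{2,q}}$ from \cite{CD89} (since $\ell_q\subsetneq\ell_{2,q}$ for $q<2$), not with \cite{KS,SS2018}; (iii) only in the remaining case does one get $\{f_k\}$ equivalent to the $\ell_{2,q}$-basis and invoke $\ell_{2,q}\not\hookrightarrow L_{2,q}(0,1)$. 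For $q>2$ no copy of $\ell_{2,q}$ is ever extracted, and indeed none could be via domination, because the model sequence is chosen with $g_1\in L_{2,q}\setminus L_2$ precisely so that $n^{-1/2}\bigl\|\sum_{k\le n}g_k\bigr\|_{L_{2,q}}\to\infty$ (\cite[Proposition~3.11]{CD89}); the contradiction is that any disjoint sequence in $\ell_2\oplus\ell_{2,q}$, $q>2$, satisfies an upper $2$-estimate and hence grows at most like $\sqrt{n}$. So the ``different model sequences'' you anticipate are not interchangeable inputs to one case analysis: they feed two structurally different contradictions, and the one you describe covers only part of the first.
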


\begin{proof}
Assume by contradiction that $L_{2,q}$ is isomorphically embedded into $C_{2,q}$. Further, we consider the cases when $1\le q<2$ and $2<q<\infty$ separately.

(a) $1\le q<2$. By Corollary~3.6 from the paper \cite{AS08} (see also \cite{JSZ}),
for every $1\le q<2$ the space $L_{2,q}$ contains a sequence of independent identically distributed mean zero functions $\{f_k\}_{k=1}^\infty$, which is not equivalent in $L_{2,q}$ to the unit vector basis in $\ell_2$.
Moreover, according to \cite[Corollary~3.14]{CD89}, for some $C>0$ we have
\begin{equation}
\label{eq1a}
\left\|\sum_{k=1}^\infty a_kf_k\right\|_{L_{2,q}}\le C\left\|(a_k)\right \|_{\ell_{2,q}}.
\end{equation}
Since $f_k$, $k=1,2,\dots$, are independent and identically distributed, then $\{f_k\}_{k=1}^\infty$ is a symmetric basic sequence in $L_{2,q}$. Therefore, by \cite[Corollary~3.2]{Arazy81}, the closed linear span $[f_k]$ in $L_{2,q}$ is isomorphically embedded into the space $\ell_2\oplus \ell_{2,q}$. This means that there is a sequence $\{x_k\}\subset \ell_2\oplus \ell_{2,q}$ such that for all $a_k\in\mathbb{R}$
\begin{equation}
\label{eq0}
\left\|
\sum_{k=1}^\infty a_kx_k\right\|_{\ell_2\oplus \ell_{2,q}}\asymp \left\|\sum_{k=1}^\infty a_kf_k\right\|_{L_{2,q}}.
\end{equation}
Combining this with the fact that $\{f_k\}$ is not equivalent in $L_{2,q}$ to the unit vector basis in $\ell_2$ and with inequality \eqref{eq1a}, we see that the sequence $\{x_k\}$ is not equivalent in $\ell_2\oplus \ell_{2,q}$ to the latter basis as well and
\begin{equation}
\label{eq1}
\left\|\sum_{k=1}^\infty a_kx_k\right\|_{\ell_2\oplus \ell_{2,q}}\le C_1 \left\| (a_k) \right \|_{\ell_{2,q}}.
\end{equation}

Observe that $f_k$ are independent, $\int_0^1f_k(s)\,ds=0$, $k=1,2,\dots$, and $\left\|f_k \right\|_{L_2}=\left\| f_1\right\|_{L_2}\le \left\|f_1\right\|_{L_{2,q}}$, $k\ge 2$ \cite[Chapter 4, Proposition 4.2]{Bennett_S}. Hence, the functions $f_k/\left\|f_1\right\|_{L_2}$, $k=1,2,\dots$, form an orthonormal system. Therefore, $\int_0^1f_k(s)g(s)\,ds\to 0$ as $k\to\infty$ for each $g\in L_2$. Since $L_2$ is dense in the dual space $(L_{2,q})^*=L_{2,q'}$, $1/q+1/q'=1$, we obtain that $\{f_k\}$ is  weakly null in $L_{2,q}$. Thus, $\{x_k\}$ is weakly null in $\ell_2\oplus \ell_{2,q}$ as well, and hence, applying the Bessaga--Pelczy\'{n}ski selection principle (see e.g. \cite[Proposition 1.a.12]{LT1}), we can assume that $x_k$, $k=1,2,\dots$, are pairwise disjointly supported.

Let $x_k=y_k+z_k$, where $y_k\in \ell_2$ and $z_k\in \ell_{2,q}$, $k=1,2,\dots$. Then,
\begin{equation}
\label{eq2}
\left\|\sum_{k=1}^\infty a_kx_k\right\|_{\ell_2\oplus \ell_{2,q}}\asymp \left\|\sum_{k=1}^\infty a_ky_k\right\|_{\ell_2}+\left\|\sum_{k=1}^\infty a_kz_k\right \|_{\ell_{2,q}}.
\end{equation}
If $\liminf_{k\to\infty} \left\|z_k\right\|_{\ell_{2,q}}=0$, then passing to a subsequence if it is necessary, we get that $\{x_k\}$ is equivalent in $\ell_2\oplus \ell_{2,q}$ to the unit vector basis in $\ell_2$, which contradicts our assumption. Therefore, $\left\|z_k\right\|_{\ell_{2,q}}\ge c$, $k=1,2,\dots$, for some $c>0$.

Let $z_k=(z_k(i))_{i=1}^\infty$. We consider two cases: (i) $\liminf_{k\to\infty} \sup_i|z_k(i)|=0$ and (ii) $\liminf_{k\to\infty} \sup_i|z_k(i)|>0$.

In the case (i), by \cite[Proposition~1]{Dilworth}, passing to a subsequence, we can assume that $\{z_k\}$ is equivalent in $\ell_{2,q}$ to the unit vector basis in $\ell_q$, and then from the inequality $q<2$ and \eqref{eq2} it follows that $\{x_k\}$ is equivalent in $\ell_2\oplus \ell_{2,q}$ to the same basis, which contradicts inequality \eqref{eq1} because $\ell_q\stackrel{\ne}{\subset}\ell_{2,q}$ for $q<2$ \cite[p.217]{Bennett_S}.

In the case (ii), we can find $\delta>0$ such that for each $k=1,2,\dots$ there is a positive integer $i_k$ satisfying the inequality $|z_k(i_k)|\ge \delta$. Then, since $z_k$, $k=1,2,\dots$, are pairwise disjointly supported, it follows that
$$
\left\|\sum_{k=1}^\infty a_kz_k\right\|_{\ell_{2,q}}\ge \delta \left\|(a_k)\right\|_{\ell_{2,q}}.
$$
Hence,  we obtain
$$
\left\|(a_k)\right\|_{\ell_{2,q}}\stackrel{\eqref{eq1}}{\ge} C_1^{-1}\left\|\sum_{k=1}^\infty a_kx_k\right\|_{\ell_2\oplus \ell_{2,q }}\stackrel{\eqref{eq2}}{\ge} c'\left\|\sum_{k=1}^\infty a_kz_k\right\|_{\ell_{2,q}}\ge c\left\|(a_k)\right \|_{\ell_{2,q}},
$$
for some $c>0$. Therefore, taking into account equivalence \eqref{eq0}, we conclude that the sequence $\{f_k\}$ is equivalent in $L_{2,q}(0,1)$ to the unit vector basis in $\ell_{2,q}$, which contradicts the fact that
 $\ell_{2,q}\not\hookrightarrow L_{2,q}(0,1)$, $q\in [1,2)\cup (2,\infty)$ (see \cite{KS,SS2018}). As a result, in the case $1\le q<2$ the proof is completed.

(b) $2<q<\infty$. Our argument will be based on using Proposition~3.11 from the paper \cite{CD89}. According to this result, if $\{g_k\}_{k=1}^\infty$ is a  sequence of independent, symmetrically and identically distributed functions such that $g_1\in L_{2,q}\setminus L_2$, then
$$
\frac{1}{\sqrt{n}}\left\|\sum_{k=1}^n g_k\right \|_{L_{2,q}}\to\infty\;\;\mbox{as}\;\;n\to\infty.
$$
Since $\{g_k\}$ is a symmetric basic sequence, as in the preceding case,
by \cite[Corollary~3.2]{Arazy81}, we can find a sequence $\{x_k\}\subset \ell_2\oplus \ell_{2,q}$ such that for all $a_k\in\mathbb{R}$
\begin{equation*}
\label{eq3}
\left\|\sum_{k=1}^\infty a_kx_k\right\|_{\ell_2\oplus \ell_{2,q}}\asymp \left\|\sum_{k=1}^\infty a_kg_k\right\|_{L_{2,q}}.
\end{equation*}
Then, clearly,
\begin{equation}
\label{eq3}
\frac{1}{\sqrt{n}}\left\|\sum_{k=1}^n x_k\right\|_{\ell_2\oplus \ell_{2,q}}\to\infty\;\;\mbox{as}\;\;n\to\infty.
\end{equation}

Since $g_k$, $k=1,2,\dots$, are independent, symmetrically and identically distributed, then the sequence $\{g_k\}$ in $L_{2,q}$ is equivalent to the sequence $\{g_k(s)r_k(t)\}$ in $L_{2,q}([0,1]\times [0,1])$ 
(as above, $r_k$ are the Rademacher functions). One can readily check that the latter sequence is weakly null in $L_{2,q}([0,1]\times [0,1])$;
 so is $\{g_k\}$ in $L_{2,q}$. Thus, $\{x_k\}$ is weakly null in $\ell_2\oplus \ell_{2,q}$, and, as above, we may assume that $x_k$, $k=1,2,\dots,$ are pairwise disjoint.

If $x_k=y_k+z_k$, where $y_k\in \ell_2$ and $z_k\in \ell_{2,q}$, $k=1,2,\dots$, then we have
\begin{equation*}
\label{eq4}
\left\|\sum_{k=1}^\infty a_kx_k\right\|_{\ell_2\oplus \ell_{2,q}}\asymp \left\|\sum_{k=1}^\infty a_ky_k\right\|_{\ell_2}+\left\|\sum_{k=1}^\infty a_kz_k\right\|_{\ell_{2,q}}.
\end{equation*}
Thus, since the sequences $\{y_k\}$ and $\{z_k\}$ consist of pairwise disjoint elements and the space $\ell_{2,q}$, $q>2$, admits an upper $2$-estimate (see e.g. \cite[Theorem~3]{Dilworth}), this inequality yields
$$
\left\|\sum_{k=1}^\infty a_kx_k\right\|_{\ell_2\oplus \ell_{2,q}}\le C'\Big\{\Big(\sum_{k=1}^\infty |a_k|^2\|y_k\|_{\ell_2}^2\Big)^{1/2}+\Big(\sum_{k=1}^\infty |a_k|^2\|z_k\|_{\ell_{2,q}}^2\Big)^{1/2}\Big\}\le C\left\| (a_k)\right \|_{\ell_2},$$
which contradicts \eqref{eq3}.
\end{proof}

Summing up Theorems \ref{Lor}, \ref{theor:easy} and Proposition \ref{prop easy}, we get the following result.

\begin{theorem}
If a couple $(p,q)$ of positive numbers satisfies one of the following conditions:
 \begin{enumerate}
   \item $1< p < 2$  and  $1\le q<\infty $;
   \item $p=2$ and $q\in [1,2)\cup (2,\infty)$;
   \item $2<q\le p<\infty$,
 \end{enumerate}
 then we have
$$L_{p,q}(0,1) \not\hookrightarrow C_{p,q}.  $$
\end{theorem}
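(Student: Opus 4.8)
The plan is to simply assemble this final theorem from the three ingredients already established, checking that each of the listed cases $(1)$, $(2)$, $(3)$ is covered by exactly one of them. There is essentially no new mathematics to do here; the proof is a bookkeeping exercise.

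First I would dispose of case $(1)$, $1<p<2$ and $1\le q<\infty$. This is a direct citation of Proposition~\ref{prop easy}. Indeed, part (iii) of that proposition gives $L_{p,q}(0,1)\not\hookrightarrow C_{p,q}$ whenever $q\ne 2$ and $q\ne q$ is not required — more precisely, for $q\ne 2$ part (iii) applies with $p_1=p_2=p$, $q_1=q_2=q$ only if $q_1\ne q_2$, which fails, so one must instead invoke part (ii): since $1<p<2$, we may pick $r\in(p,2)$ and then $t^{-1/r}\in L_{p,q}(0,1)$, so part (ii) yields $L_{p,q}(0,1)\not\hookrightarrow C_{p,q}$. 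For the remaining subcase $q=2$ (still with $1<p<2$), part (ii) applies verbatim in the same way. So case $(1)$ is entirely covered by Proposition~\ref{prop easy}(ii).

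Next, case $(2)$, $p=2$ and $q\in[1,2)\cup(2,\infty)$, is precisely the content of Theorem~\ref{theor:easy}, so nothing further is needed. Finally, case $(3)$, $2<q\le p<\infty$, is exactly the ``in particular'' clause of Theorem~\ref{Lor} (take $p_1=p_2=p$, $q_1=q_2=q$, and note $\max(p_2,q_2)=p\le p_1$ since $q\le p$). Hence each of the three cases reduces to a result proved earlier in the paper.

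I do not expect any real obstacle; the only thing that requires a moment's care is making sure that in case $(1)$ one selects the correct part of Proposition~\ref{prop easy} (part (ii), via the function $t^{-1/r}$ with $p<r<2$, rather than part (iii), which needs $q_1\ne q_2$). Thus the proof is the following.

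\begin{proof}
We consider the three cases separately.

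Case $(1)$: $1<p<2$ and $1\le q<\infty$. Choose any $r$ with $p<r<2$. Then $t^{-1/r}\in L_{p,q}(0,1)$, and so Proposition~\ref{prop easy}(ii) (applied with $E=L_{p,q}(0,1)$, $p_1=p$, and the ideal $C_{p,q}$) yields $L_{p,q}(0,1)\not\hookrightarrow C_{p,q}$.

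Case $(2)$: $p=2$ and $q\in[1,2)\cup(2,\infty)$. This is exactly Theorem~\ref{theor:easy}.

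Case $(3)$: $2<q\le p<\infty$. This is the special case $p_1=p_2=p$, $q_1=q_2=q$ of Theorem~\ref{Lor}; note that the hypotheses $q_2,p_2>2$ and $\max(p_2,q_2)\le p_1$ hold since $2<q\le p$.

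In all three cases $L_{p,q}(0,1)\not\hookrightarrow C_{p,q}$, which completes the proof.
\end{proof}
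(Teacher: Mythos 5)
Your proof is correct and is essentially identical to the paper's, which likewise just combines Proposition~\ref{prop easy} (for $1<p<2$), Theorem~\ref{theor:easy} (for $p=2$, $q\ne 2$), and Theorem~\ref{Lor} (for $2<q\le p$). Your care in selecting part (ii) rather than part (iii) of Proposition~\ref{prop easy} for case (1) is well placed, and the verification that $t^{-1/r}\in L_{p,q}(0,1)$ for $p<r<2$ is exactly what is needed.
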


\begin{quest}
Does $L_{p,q}(0,1)$ isomorphically embed into $C_{p,q}$ when
 $2<p<\infty$ and  $q\in [1,2]\cup (p,\infty)$?
\end{quest}


%


%

\end{document}